\documentclass{amsart}

\usepackage{xurl}
\usepackage{url}
\usepackage{mathtools}
\usepackage[square,numbers]{natbib}
\usepackage{float}
\restylefloat{table}
\usepackage[utf8]{inputenc}
\usepackage[toc,page]{appendix}
\usepackage{amsmath,amsthm,amssymb} 
\usepackage{graphicx}
\usepackage[colorinlistoftodos,prependcaption]{todonotes}
\usepackage{xargs}
\usepackage{multicol}
\newcommandx{\unsure}[2][1=]{\todo[linecolor=blue,backgroundcolor=blue!25,bordercolor=blue,#1]{#2}}
\usepackage[section]{placeins}
\usepackage{float}
\usepackage{comment}
\usepackage{xcolor}
\usepackage{graphicx}
\usepackage{dcolumn}
\usepackage{bm}
\usepackage{amsaddr}

\usepackage{tabularray,graphicx,xcolor}
\UseTblrLibrary{diagbox}

\newmuskip\pFqmuskip
\newcommand*\pFq[6][8]{
	\begingrou
	\pFqmuskip=#1mu\relax
	\mathcode`\.=\string"8000
	\begingroup\lccode`\~=`\,
	\lowercase{\endgroup\let~}\pFqcomma
	{}_{\,#2}F_{\,#3}{\left[\genfrac..{0pt}{}{\,#4}{\,#5};#6\right]}
	\endgroup
}
\newcommand{\pFqcomma}{\mskip\pFqmuskip}

\makeatletter
\renewenvironment{proof}[1][\proofname]{%
  \par\pushQED{\qed}\normalfont%
  \topsep6\p@\@plus6\p@\relax
  \trivlist
  \item[\hskip\labelsep\bfseries #1\@addpunct{.}]%
}{%
  \popQED\endtrivlist\@endpefalse
}
\makeatother

\newtheorem{theorem}{Theorem}[section]
\newtheorem{corollary}{Corollary}[section]

\newtheorem{lemma}{Lemma}[section]

\newcommand{\bbint}[2]{\ensuremath{\;\backslash\!\!\!\!\backslash\!\!\!\!\!\int_{#1}^{#2}}}

\numberwithin{equation}{section}

\usepackage{hyperref} 
\hypersetup{colorlinks=true, citecolor=blue, linkcolor=blue, urlcolor=blue}
\DeclareUnicodeCharacter{2212}{-}

\begin{document}
\title[Finite-part integrals with logarithmic singularities]{Contour Integral Representations of Finite-part Integrals with Logarithmic Singularities}

\author{Reynaldo P. Ylanan \& Eric A. Galapon}
\address{Theoretical Physics Group, National Institute of Physics, University of the Philippines, Diliman, Quezon City, 1101 Philippines}
\email{eagalapon@up.edu.ph}
\date{\today}


\begin{abstract}
The integral $\int_0^a f(t) t^{-s} \mathrm{d}t$ diverges for $\text{Re}(s) \geq \lambda + 1$, where $\lambda$ is the order of the first non-vanishing derivative of $f(t)$ at the origin. With the assumption that $f(t)$ is analytic at the origin, the finite-part of the divergent integral assumes the contour integral representation of the form $\bbint{0}{a} f(t) t^{-s} \mathrm{d}t =   \int_C f(z) z^{-s} G(z)  \mathrm{d}z$ where $G(z)$ depends on whether $z=0$ constitutes a pole or a branch point singularity of $z^{-s}$ [E. A. Galapon, \textit{Proc. R. Soc.}, \textbf{A 473} (2017), no. 2197, 20160567.]. In this paper, we extend these representations to accommodate logarithmic singularities of arbitrary order $n \in \mathbb{N}$, specifically for $\bbint{0}{a} f(t) t^{-s} \ln^n t \, \mathrm{d}t$. We then demonstrate the utility of the representations in the numerical evaluation of finite-part integrals and their use in determining the finite parts of non-Mellin-type divergent integrals---those which exhibit singular behavior at the origin but lack a well-defined Mellin transform. Finally, these representations provide a closed-form evaluation of the Stieltjes transform $\int_0^a k(t) \ln^n t \left( t^\nu (\omega^2 + t^2) \right)^{-1} \mathrm{d}t$ in terms of finite-part integrals, from which the dominant asymptotic behavior is readily extracted for vanishingly small values of the parameter $\omega$.
\end{abstract}

\maketitle


\section{Introduction}

Finite-part integration is a method of solving convergent integrals by evaluating the finite-part of the divergent integral induced from the convergent integral itself \cite{galaponMissing, galapon2023, villanueva2021finite}. The technique is applied for the exact evaluation of the  Stieltjes transform \cite{galapon2023, tica, tica2, TicaGalapon2023},
    \begin{align}
        \label{stiel-trans}
        \mathcal{S}_f (\omega) = \int_0^a \frac{f(t)}{(\omega + t)} \, \mathrm{d}t.
    \end{align}
Divergent integrals are introduced by expressing the kernel function as an infinite series
    \begin{align}
        \label{kern-exp}
        \frac{1}{\omega + t} = \sum^\infty_{s=0} \frac{(-1)^s}{t^{s+1}}\omega^s, \qquad \left| \frac{\omega}{t}\right| < 1 ,
    \end{align}
where the expansion is about \(\omega =0 \). The divergence arises from the formal interchange in the order of integration and summation without conforming to the uniformity conditions of the expansion in the given limit of integration. The right-hand side of equation \eqref{stiel-trans} becomes
    \begin{align}
        \label{stiel-trans-1}
        \sum^\infty_{s=0} (-\omega)^s \int_0^a \frac{f(t)}{t^{s+1}} \, \mathrm{d}t.
    \end{align}
The integral inside the summation is expected to diverge if the function \(f(t)\), or any of its derivatives, does not vanish at \(t=0\). In simple terms, equation \eqref{stiel-trans-1} becomes a series in which each term is divergent. To use the equality in equation \eqref{stiel-trans}, the divergent integrals can be replaced by their finite-parts plus the missing terms that can be recovered by lifting the integration to the complex plane. It turns out that the missing terms are encoded in the singularity of the kernel function \cite{galaponMissing}. The finite-part of the divergent integral needed in this context can be defined by replacing the lower limit of integration with some positive number \(\epsilon\) that is \(0<\epsilon<a\) and expressing the solution of the integral as
    \begin{align}
        \label{stiel-trans-1-1}
        \int_\epsilon^a \frac{f(t)}{t^{s+1}} \, \mathrm{d}t = C_{\epsilon} + D_{\epsilon}.
    \end{align}
where \(C_\epsilon\) converges and \(D_\epsilon\) diverges as \(\epsilon \to 0\). The finite-part of the divergent integral in equation \eqref{stiel-trans-1} can now be denoted and defined as \cite{galapon2023}
    \begin{align}
        \label{FPI}
        \bbint{0}{a} \frac{f(t)}{t^\lambda} \mathrm{d}t = \lim_{\epsilon \to 0} C_\epsilon = \lim_{\epsilon \to 0} \left( \int_\epsilon^a\frac{f(t)}{t^\lambda} \mathrm{d}t - D_\epsilon\right).
    \end{align}

Under certain conditions, the finite-part integral can be represented by the analytic continuation of the Mellin transform beyond its strip of analyticity. The values of the parameter \(\lambda\) in equation~\eqref{FPI} determine the relationship between these two operations. When \(\lambda\) is a natural number, the evaluation of the finite-part integral requires the concept of the regularized limit. In the Laurent series expansion of a function about a singular point, the regularized limit corresponds to the coefficient of the zero-order term, i.e., the constant term.  

To compute the regularized limit for functions with poles of order \(n\), a matrix formulation of size \((n+1) \times (n+1)\) is employed. This matrix arises from the algebraic division of power series and is discussed in detail in~\cite{galapon2023}, \cite[p. 18, 0.313]{Gradshteyn-Ryzhik}. While this matrix-based approach is rigorous, it becomes increasingly tedious for functions with higher-order poles. One of the aims of this paper is to derive a closed-form expression for the regularized limit. Such a representation is expected to provide a more practical and efficient method for evaluating regularized limits involving poles of arbitrary order.

Contour integration has been shown to be an effective method complementary to finite-part integration in the evaluation of convergent integrals. Using the simple contour of integration illustrated in Figure~\ref{Intro-fig-01}, the finite-part integral appearing in equation~\eqref{stiel-trans-1} can be expressed as a contour integral~\cite{galaponMissing}:
\begin{figure}[t]
    \centering
    \includegraphics[width=6cm]{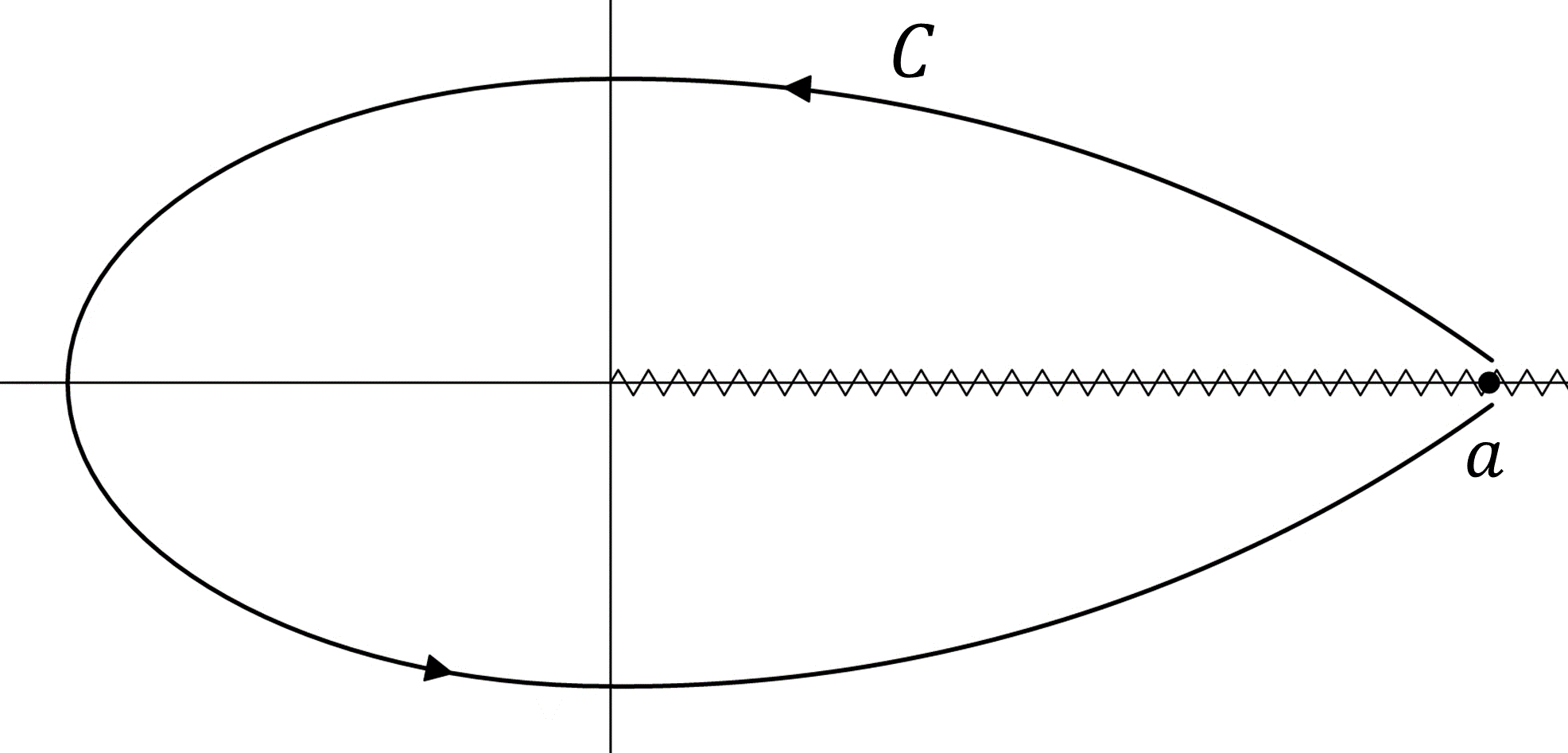}
    \caption{Contour of integration \(C\). The branch cut for \(z^\lambda\) lies along the positive real axis.}
    \label{Intro-fig-01}
\end{figure}
\begin{equation}
    \label{cont-int-rep-01}
    \bbint{0}{a} \frac{k(t)}{t^\lambda} \mathrm{d}t = \int_C \frac{k(z)}{z^\lambda} G(z) \mathrm{d}z.
\end{equation}
where $G(z)$ is dertermined by whether $z=0$ is a pole or a branch point of $z^{-\lambda}$. For a pole singularity, i.e., when $\lambda \in \mathbb{Z}_0^+$, 
\begin{equation}
    \label{cont-int-rep-001}
    G(z) = \frac{1}{2\pi i} (\log z - \pi i).
\end{equation}
On the other hand, for a branch point, i.e. $\lambda \notin \mathbb{Z}$, it is given by
\begin{equation}
    \label{cont-int-rep-002}
    G(z) = \frac{1}{(e^{-2\pi i \lambda} - 1)} .
\end{equation}
This expression serves as the starting point for the development of contour integral representations analyzed in this paper. A notable limitation of equation~\eqref{cont-int-rep-01} is that it does not account for the presence of a logarithmic singularity. Another objective of this work is to extend the contour integral representation of the finite-part integral to accommodate integrands that exhibit logarithmic singularities of arbitrary order.

The contour integral representation extends the domain of the Mellin transform beyond its strip of analyticity by leveraging its connection with the finite-part integral. While the Mellin transform may not always yield a well-defined result, even under analytic continuation, the finite-part integral is expected to exist or, at the very least, to evaluate to zero. As demonstrated in~\cite{galapon2023}, the finite-part integral can exist independently of the Mellin transform. The crucial insight lies in the use of the contour integral representation of the finite-part integral, which enables the evaluation of otherwise ill-defined expressions.

An additional advantage of the contour integral approach is its utility in the numerical computation of finite-part integrals. By employing contour deformation techniques, one can determine the desired solution to the given integral problem with greater flexibility and precision.

Finally, the last objective of this paper is to demonstrate the application of finite-part integration in evaluating the Stieltjes transform of the form
\begin{equation}
    \int_0^\infty \frac{k(t) \ln^n{t}}{t^\nu (\omega^2 + t^2)} \; \mathrm{d}t,
\end{equation}
This type of integral has a potential connection to the Euler–Heisenberg effective Lagrangian~\cite{Tica-Blancas-Galapon-2025-1}. The Euler–Heisenberg Lagrangian represents a non-linear correction to the classical Maxwell Lagrangian, and is used in quantum electrodynamics to describe the phenomenon of vacuum polarization in the presence of an external electromagnetic field~\cite{dune-01, dunne2012heisenberg}.

The rest of this paper is organized as follows. In Section~\ref{regularized_lim}, we introduce the concept of the regularized limit and present a new formulation that incorporates the ideas of integer partitions and the reciprocal of an infinite series. Section~\ref{contour-integral-fpi} develops the contour integral representation of the finite-part integral in the presence of logarithmic singularities of arbitrary order. This section also demonstrates how the representation can be used to compute the numerical values of finite-part integrals and to establish their existence beyond the framework of the Mellin transform. In Section~\ref{Stieljes-Transform-Section}, we apply the developed techniques to evaluate the generalized Stieltjes transform in cases involving logarithmic singularities.


\section{The Regularized Limit}
\label{regularized_lim}

Let us consider a function \( w(\lambda) \) that is meromorphic on a domain \( D \subseteq \mathbb{C} \), meaning it remains analytic on \( D \) except at a finite number of isolated singularities. These singularities may be removable, essential, or poles. Suppose \( \lambda_0 \) is an interior point of \( D \) and serves as an isolated singularity of \( w(\lambda) \). Then \( w(\lambda) \) admits the following Laurent series expansion around \( \lambda_0 \):
\begin{align}
    \label{eq2.1}
    \begin{split}
        w(\lambda) & = \sum_{k=-\infty}^\infty a_{k}(\lambda - \lambda_0)^k = \sum_{k=1}^\infty \frac{a_{-k}}{(\lambda - \lambda_0)^k} + \sum_{k=0}^\infty a_{k}(\lambda - \lambda_0)^k,
    \end{split}
\end{align}
which holds in the deleted neighborhood \( D' = D(r, \lambda_0) \setminus \{ \lambda_0 \} \), where \( D(r, \lambda_0) \) denotes an open disk of radius \( r \) centered at \( \lambda_0 \). The first sum in equation \eqref{eq2.1} is called the principal part, denoted \( w_P(\lambda \mid \lambda_0) \), and the second sum is called the regular part, denoted \( w_R(\lambda \mid \lambda_0) \). The principal part diverges as \( \lambda \to \lambda_0 \).

The regularized limit of \( w(\lambda) \) at the singular point \( \lambda_0 \) is defined as
\begin{align}
    \label{eq2.2}
    \lim^\times_{\lambda \to \lambda_0} w(\lambda) = \lim_{\lambda \to \lambda_0} \left[ w(\lambda) - w_P(\lambda \mid \lambda_0) \right].
\end{align}
This regularized limit extracts the finite and constant part of the function as \( \lambda \to \lambda_0 \), yielding
\begin{equation}
    \lim^\times_{\lambda \to \lambda_0} w(\lambda) = a_0,
\end{equation}
where \( a_0 \) is the constant term in the expansion of \( w(\lambda) \), as given in equation~\eqref{eq2.1}. We can represent it using a contour integral:
\begin{align}
    \label{reg-lim-cont-int}
    \lim^\times_{\lambda \to \lambda_0} w(\lambda) = \frac{1}{2 \pi i} \oint_{|\lambda - \lambda_0| = \rho} \frac{w(\lambda)}{(\lambda - \lambda_0)} \, \mathrm{d}\lambda,
\end{align}
where \( \rho \) is sufficiently small. This result follows directly from the integral representation of the constant coefficient \( a_0 \) in the Laurent expansion \cite{galapon2023}.

One important property of the regularized limit used in this paper is its linearity. Given an infinite sequence of functions \(\{w_k (\lambda)\}^\infty_{k=0}\) of the complex variable \(\lambda\), each analytic in some region \(D\), except possibly at isolated singularities. Suppose \( \lambda_0 \) is an interior point of \( D \), and that the infinite series \(\sum_{k=0}^\infty w_k(\lambda)\) converges uniformly in some deleted neighborhood, \( D' = D(r, \lambda_0) \setminus \{ \lambda_0 \} \), of \(\lambda_0\). Then
\begin{align}
    \label{lin-reg-lim}
    \lim^\times_{\lambda \to \lambda_0} \sum_{k=0}^\infty w_k(\lambda) = \sum_{k=0}^\infty \lim^\times_{\lambda \to \lambda_0} w_k(\lambda).
\end{align}
This means that the regularized limit can be distributed over the summation given the right conditions. This relation is presented in \cite{galapon2023} as one of the theorems under the discussion of the regularized limit. The proof is straightforward: since all \(w_k (\lambda)\) are analytic in \(D'\), the infinite series on the left-hand side of equation \eqref{lin-reg-lim} may replace \(w(\lambda)\) in the right-hand side of the contour integral representation \eqref{reg-lim-cont-int}, and term-by-term integration is allowed. This leads directly to the right-hand side of equation \eqref{lin-reg-lim}.

Consider a point \(\lambda_0\) that is a pole of \(w(\lambda)\) of order \(n\). Suppose \( w(\lambda) \) can be written as a rational function \( f(\lambda)/g(\lambda) \), where \( f(\lambda_0) \neq 0 \) and \( g(\lambda_0) = 0 \), so that the singularity arises from the vanishing of the denominator. In this case, the regularized limit of \( w(\lambda) \) at \(\lambda_0\) is denoted by
\begin{equation}
    \label{eq2.3}
    \lim^\times_{\lambda \to \lambda_0} \frac{f(\lambda)}{g(\lambda)}.
\end{equation}
The explicit formula for computing this limit is given by \cite{galapon2023}
\begin{align}
    \label{reg-lim-old}
    \lim^\times_{\lambda \to \lambda_0} \frac{f(\lambda)}{g(\lambda)} = \left( \frac{n!}{g^{(n)}(\lambda_0)} \right)^{n+1} \det \Delta^{(n)}(\lambda_0),
\end{align}
where \( \Delta^{(n)}(\lambda_0) \) is an \( (n+1) \times (n+1) \) matrix of the form
\begin{align}
    \Delta^{(n)}(\lambda_0) = 
    \begin{bmatrix}
        \frac{g^{(n)}(\lambda_0)}{n!} & 0 & 0 & \cdots & \frac{f(\lambda_0)}{0!} \\
        \frac{g^{(n+1)}(\lambda_0)}{(n+1)!} & \frac{g^{(n)}(\lambda_0)}{n!} & 0 & \cdots & \frac{f^{(1)}(\lambda_0)}{1!} \\
        \frac{g^{(n+2)}(\lambda_0)}{(n+2)!} & \frac{g^{(n+1)}(\lambda_0)}{(n+1)!} & \frac{g^{(n)}(\lambda_0)}{n!} & \cdots & \frac{f^{(2)}(\lambda_0)}{2!} \\
        \vdots & \vdots & \vdots & \ddots & \vdots \\
        \frac{g^{(2n)}(\lambda_0)}{(2n)!} & \frac{g^{(2n-1)}(\lambda_0)}{(2n-1)!} & \frac{g^{(2n-2)}(\lambda_0)}{(2n-2)!} & \cdots & \frac{f^{(n)}(\lambda_0)}{n!}
    \end{bmatrix}.
\end{align}

We find that equation~\eqref{reg-lim-old} is reliable for generating explicit formulas; however, it remains difficult to manipulate and incorporate into broader mathematical frameworks, particularly when attempting to generalize the contour integral representation of the finite-part integral, the central focus of this paper. To address this limitation, we present the regularized limit as an explicit summation over indices in Theorem~\ref{theorem2.1}.

\begin{lemma}[Galapon, \cite{galapon2023}]
    \label{lemma2.1}
    Consider a function \(v(\lambda)\) that is analytic at \(\lambda = \lambda_o\). Then,
        \begin{align}
            \label{lemma2.1-001}
            \lim^\times_{\lambda \to \lambda_o} \frac{v(\lambda)}{(\lambda -\lambda_0)^n} = \frac{v^{(n)}(\lambda_0)}{n!}, 
        \end{align}
    for any positive arbitrary n.
\end{lemma}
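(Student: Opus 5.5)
Since \(v\) is analytic at \(\lambda_0\), it has a Taylor expansion \(v(\lambda)=\sum_{k=0}^\infty \frac{v^{(k)}(\lambda_0)}{k!}(\lambda-\lambda_0)^k\) that converges in some disk \(D(r,\lambda_0)\). The plan is to divide this series termwise by \((\lambda-\lambda_0)^n\). On the punctured disk \(D'=D(r,\lambda_0)\setminus\{\lambda_0\}\) this gives the Laurent expansion
\begin{equation*}
\frac{v(\lambda)}{(\lambda-\lambda_0)^n}=\sum_{k=0}^{\infty}\frac{v^{(k)}(\lambda_0)}{k!}(\lambda-\lambda_0)^{k-n}=\sum_{j=-n}^{\infty}\frac{v^{(j+n)}(\lambda_0)}{(j+n)!}(\lambda-\lambda_0)^{j}.
\end{equation*}
By uniqueness of the Laurent expansion in \(D'\), the coefficients in \eqref{eq2.1} are \(a_j = v^{(j+n)}(\lambda_0)/(j+n)!\). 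In particular the principal part \(w_P(\lambda\mid\lambda_0)\) consists of the terms with \(j=-n,\dots,-1\).

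Next, subtract this principal part as in \eqref{eq2.2}. What remains is \(\sum_{j\ge 0} a_j(\lambda-\lambda_0)^j\), a convergent power series and hence continuous at \(\lambda_0\). Its limit as \(\lambda\to\lambda_0\) is therefore \(a_0=v^{(n)}(\lambda_0)/n!\), which is the claimed formula \eqref{lemma2.1-001}.

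As a cross-check, or as an alternative route, one can use the contour representation \eqref{reg-lim-cont-int}. It gives
\begin{equation*}
\lim^\times_{\lambda\to\lambda_0}\frac{v(\lambda)}{(\lambda-\lambda_0)^n}=\frac{1}{2\pi i}\oint_{|\lambda-\lambda_0|=\rho}\frac{v(\lambda)}{(\lambda-\lambda_0)^{n+1}}\,\mathrm{d}\lambda,
\end{equation*}
which equals \(v^{(n)}(\lambda_0)/n!\) directly by Cauchy's integral formula for derivatives, taking \(\rho<r\).

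I do not expect a real obstacle. The only point needing care is the hypothesis "for any positive arbitrary \(n\)": the argument as written assumes \(n\) is a positive integer, so that \(\lambda_0\) is at worst a pole and the Laurent framework of \eqref{eq2.1} applies. The case \(v(\lambda_0)=0\) needs no separate treatment, since then the pole simply has lower order, or \(\lambda_0\) is a removable singularity, and the same coefficient identification still holds.
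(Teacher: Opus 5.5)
Your proof is correct. Dividing the Taylor series of $v$ by $(\lambda-\lambda_0)^n$ identifies the constant Laurent coefficient as $a_0=v^{(n)}(\lambda_0)/n!$, which is exactly what the regularized limit extracts, and your cross-check via \eqref{reg-lim-cont-int} reduces to Cauchy's integral formula. The paper gives no proof of its own here (it quotes the lemma from Galapon), but your argument rests on the same setup the paper provides, namely the identification of the regularized limit with $a_0$ and the contour representation; reading $n$ as a positive integer is also correct.
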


\begin{lemma}[Salem, \cite{salem}]
\label{lemmaSk}
    Given a function \( y(\lambda) \) defined as the reciprocal of an infinite series,
    \begin{align}
    \label{y_rec-inf-ser}
        y(\lambda) = \frac{1}{\sum_{\ell=0}^\infty a_\ell (\lambda - \lambda_0)^\ell},
    \end{align}
    where \( a_\ell \) are the coefficients of a power series centered at \( \lambda_0 \) and \(a_0 \neq 0\), the \(k^{\text{th}}\) derivative of \( y(\lambda) \) evaluated at \( \lambda_0 \), denoted by \( S_k \), is given by
    \begin{align}
    \label{Sk}
        S_k = \frac{k!}{a_0} \sum_{i=1}^{p(k)}  J^{(k)}_i ! \, \prod_{r=1}^{k} \frac{1}{m_{ri} !} \left( - \frac{a_r}{a_0} \right)^{m_{ri}},
    \end{align}
    where \( S_0 = 1 / a_0 \), \( p(k) \) is the number of partitions of the positive integer \( k \), and \( J_i^{(k)} \) is given by
    \begin{equation}
    \label{Sk1}
        J^{(k)}_i = \sum_{\ell=1}^k m_{\ell i},
    \end{equation}
    where the \( m_{\ell i} \) are integers satisfying the identity
    \begin{align}
        k = m_{1i} + 2m_{2i} + \cdots + k m_{ki}.
    \end{align}
\end{lemma}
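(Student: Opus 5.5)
Write $h(\lambda)=\sum_{\ell\ge1}(-a_\ell/a_0)(\lambda-\lambda_0)^\ell$, so that
\[
y(\lambda)=\frac{1}{a_0}\,\frac{1}{1-h(\lambda)} .
\]
The plan is to expand $1/(1-h)$ as a geometric series, expand each power $h^J$ by the multinomial theorem, and read off coefficients. Since $h(\lambda_0)=0$ and $h$ is analytic near $\lambda_0$, we have $|h(\lambda)|<1$ in a small disk. There
\[
y(\lambda)=\frac{1}{a_0}\sum_{J=0}^\infty h(\lambda)^J ,
\]
and the series converges uniformly. By Taylor's theorem, $S_k = k!\,[(\lambda-\lambda_0)^k]\,y(\lambda)$, where $[\,\cdot\,]$ denotes the coefficient of the indicated power. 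It therefore suffices to compute the coefficient of $(\lambda-\lambda_0)^k$ in $\sum_J h^J$.

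For each fixed $J$, expand $h^J=\bigl(\sum_{r\ge1} b_r x^r\bigr)^J$, with $x=\lambda-\lambda_0$ and $b_r=-a_r/a_0$, by the multinomial theorem:
\[
h^J=\sum_{m_1+m_2+\cdots=J}\frac{J!}{\prod_r m_r!}\prod_r b_r^{m_r}\,x^{\sum_r r m_r}.
\]
Picking out the power $x^k$ forces $\sum_r r m_r = k$, so only $r\le k$ contribute and the sum is finite. Since every term of $h$ has degree at least $1$, only $J\le k$ contributes to $x^k$, so interchanging the sums over $J$ and over the multi-indices is harmless.

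Summing over $J$ removes the constraint $\sum m_r=J$. We are left with a sum over all tuples $(m_1,\dots,m_k)$ of nonnegative integers with $\sum_r r m_r=k$, and each such tuple corresponds to exactly one partition of $k$, indexed by $i=1,\dots,p(k)$. The total $J=\sum_r m_r$ becomes $J_i^{(k)}$. The coefficient is therefore
\[
\sum_{i=1}^{p(k)} J_i^{(k)}!\,\prod_{r=1}^k\frac{1}{m_{ri}!}\Bigl(-\frac{a_r}{a_0}\Bigr)^{m_{ri}} .
\]
Multiplying by $k!/a_0$ gives the stated $S_k$. The case $k=0$, namely $S_0=y(\lambda_0)=1/a_0$, is immediate.

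The only delicate point is the bookkeeping identification between multi-indices with $\sum r m_r=k$ and partitions of $k$, together with the justification for rearranging the double sum. The degree argument makes all sums finite at each order, so no analytic subtleties arise beyond convergence of the geometric series near $\lambda_0$.

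Alternatively, one could apply Faà di Bruno's formula to $F\circ g$ with $F(u)=1/u$ and $g(\lambda)=\sum_\ell a_\ell(\lambda-\lambda_0)^\ell$. This uses $F^{(J)}(a_0)=(-1)^J J!/a_0^{J+1}$ and $g^{(r)}(\lambda_0)/r!=a_r$. The factors of $(-1)^J$ and $a_0^{-J}$ then distribute over the product to give $\prod_r(-a_r/a_0)^{m_r}$, which confirms the same formula.
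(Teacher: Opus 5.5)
Your proof is correct. The paper gives no proof of this lemma: it is imported from Salem and used as a black box, together with its coefficient form, Lemma~\ref{rep-of-an-inf-ser}. So there is no in-paper route to compare against. Your geometric-series/multinomial computation is a standard self-contained derivation, and your Fa\`a di Bruno check with $F(u)=1/u$ is the same identity seen from the chain-rule side.

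Two remarks on what your route buys.

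\emph{Interchange of sums.} You can make the step of extracting the coefficient of $x^k$ from $\sum_J h^J$ term by term purely algebraic. Write $1/(1-h)=\sum_{J=0}^{k}h^J+h^{k+1}/(1-h)$. The remainder vanishes to order $k+1$ at $\lambda_0$, so no uniform-convergence argument is needed beyond $y$ being analytic near $\lambda_0$. That analyticity does require $\sum_\ell a_\ell(\lambda-\lambda_0)^\ell$ to have a positive radius of convergence, which the statement leaves implicit.

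\emph{Related results.} Dividing your coefficient by $k!$ gives Lemma~\ref{rep-of-an-inf-ser} directly. If you stop before the multinomial regrouping, you get $[x^k]h^t=\sum_{r_1+\cdots+r_t=k,\ r_p\ge 1}\prod_{p=1}^{t} b_{r_p}$, with $b_r=-a_r/a_0$ and $x=\lambda-\lambda_0$. This is exactly the composition form the paper uses, without justification, in \eqref{reg-lim-rest-sum} and again in \eqref{stirling-pi-02}. The multinomial factor $J!/\prod_r m_r!$ counts the ordered compositions sharing a given multiset of parts, which is precisely why the partition form and the composition form agree.
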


\begin{lemma}[Salem, \cite{salem}]
\label{rep-of-an-inf-ser}
Given a function \( y(\lambda) \), defined as the reciprocal of the infinite series in equation~\eqref{y_rec-inf-ser}, it can be expressed as
\begin{align}
    y(\lambda) = \sum_{n=0}^\infty A_n (\lambda - \lambda_0)^n,
\end{align}
where
\begin{align}
    A_n = \frac{1}{a_0} \sum_{i=1}^{p(n)} J^{(n)}_i ! \, \prod_{r=1}^{n} \frac{1}{m_{ri}!} \left( -\frac{a_r}{a_0} \right)^{m_{ri}},
\end{align}
and \( a_0 \neq 0 \).
\end{lemma}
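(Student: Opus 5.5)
The statement to prove is Lemma~\ref{rep-of-an-inf-ser}: the reciprocal $y(\lambda)$ has a Taylor expansion about $\lambda_0$ whose coefficients $A_n$ are given by the partition sum. I would first note that since $a_0\neq 0$, the series $\sum_\ell a_\ell(\lambda-\lambda_0)^\ell$ is analytic and nonvanishing in some disk around $\lambda_0$. Hence $y(\lambda)$ is analytic there and equals its Taylor series $\sum_n y^{(n)}(\lambda_0)(\lambda-\lambda_0)^n/n!$. Then I would identify $y^{(n)}(\lambda_0)=S_n$ from Lemma~\ref{lemmaSk}, so $A_n=S_n/n!$. The factor $n!$ in \eqref{Sk} cancels, which gives exactly the stated expression, with $A_0=1/a_0$ matching $S_0$.

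To keep the argument self-contained rather than leaning only on Lemma~\ref{lemmaSk}, I would also derive the coefficients directly. Write the series as $a_0(1+u)$ with $u=\sum_{r\ge1}(a_r/a_0)(\lambda-\lambda_0)^r$. For $\lambda$ close enough to $\lambda_0$ we have $|u|<1$, so the geometric expansion gives
\begin{equation*}
y(\lambda)=\frac{1}{a_0}\sum_{J=0}^\infty(-u)^J .
\end{equation*}
Next I would expand $(-u)^J=\bigl(\sum_{r\ge1}(-a_r/a_0)(\lambda-\lambda_0)^r\bigr)^J$ with the multinomial theorem. The coefficient of $\prod_r(-a_r/a_0)^{m_r}(\lambda-\lambda_0)^{\sum_r r m_r}$ is $J!/\prod_r m_r!$, where $\sum_r m_r=J$.

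Collecting the power $(\lambda-\lambda_0)^n$ then imposes the constraint $\sum_r r m_r=n$. Each such tuple $(m_1,\dots,m_n)$ is exactly one partition $i$ of $n$, and its $J$ equals $J_i^{(n)}$ as in \eqref{Sk1}. Indices $r>n$ cannot occur, since they would force the weight to exceed $n$. This reproduces $A_n$ term by term.

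The main obstacle is justifying the rearrangement, that is, interchanging the sum over $J$ with the regrouping by powers of $(\lambda-\lambda_0)$. I would handle it by absolute convergence. Choose $\rho$ small so that $\sum_{r\ge1}|a_r/a_0|\rho^r<1$, which is possible because the power series has positive radius of convergence. Then the doubly indexed series of absolute values is dominated by a convergent geometric series, so it may be rearranged freely on $|\lambda-\lambda_0|\le\rho$. Uniqueness of power series coefficients then finishes the proof.
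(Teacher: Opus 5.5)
Your proof is correct. The paper does not prove this lemma at all; it imports it from Salem together with Lemma~\ref{lemmaSk}. The two lemmas are the same fact, stated once for derivatives ($S_k$) and once for Taylor coefficients ($A_n$). So your first observation, $A_n = S_n/n!$ once $y$ is known to be analytic near $\lambda_0$, is exactly the unstated link the paper relies on when it invokes Lemma~\ref{rep-of-an-inf-ser} in the proof of Theorem~\ref{theorem2.1}.

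Your second, self-contained derivation goes beyond what the paper offers. You write the series as $a_0(1+u)$, expand geometrically, apply the multinomial theorem to $(-u)^J$, and identify tuples $(m_1,\dots,m_n)$ with $\sum_r r\,m_r=n$ with partitions of $n$ and $J$ with $J_i^{(n)}$. Since $S_k = k!\,A_k$ by Taylor's theorem, this proves both cited lemmas at once. Your choice of $\rho$ with $\sum_{r\ge1}|a_r/a_0|\rho^r<1$ supplies the domination needed to regroup the double series. Uniqueness of coefficients then extends the identity to the whole disk where $y$ is analytic.

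The citation route is shorter but rests entirely on the external reference. Your route shows the partition formula is nothing more than a regrouped geometric series. It also explains $A_0 = 1/a_0$ as the contribution of the empty partition ($J=0$).
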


\begin{theorem}
    \label{theorem2.1}
    Let \(f(\lambda)\) be anaytic at \(\lambda_0\) with \(f(\lambda_0) \neq 0\) and \(\lambda_0\) be a zero of \(g(\lambda)\) of order n, then
    \begin{align}
    \begin{split}
        \label{theorem2.1-001}
        \lim^\times_{\lambda \to \lambda_o} \frac{f(\lambda)}{g(\lambda)} = & \frac{f^{(n)} (\lambda_o) }{g^{(n)} (\lambda_o) } + \frac{1}{g^{(n)} (\lambda_o)} \sum^{n}_{k=1} k! \binom{n}{k} \,  f^{(n-k)}(\lambda_o) \\
        & \hspace{1.5cm} \times \sum^{p(k)}_{i=1} J_i^{(k)} ! \, \prod^{k}_{r=1} \frac{1}{m_{ri} !} \Biggr[ - \frac{n!}{(r+n)!} \frac{g^{(r+n)}( \lambda_o)}{g^{(n)}( \lambda_o)} \Biggl]^{m_{ri}},
    \end{split}
    \end{align}
    where \( p(k) \) denotes the number of partitions of \( k \in \mathbb{N} \), and \( J^{(k)}_i \) and \( m_{\ell i} \) are defined in equations~\eqref{Sk} and~\eqref{Sk1}, respectively.
\end{theorem}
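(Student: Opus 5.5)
The plan is to factor the zero of \(g\) out explicitly and reduce everything to Lemma~\ref{lemma2.1} and Lemma~\ref{lemmaSk}. Since \(\lambda_0\) is a zero of order \(n\), Taylor's theorem gives
\begin{equation*}
g(\lambda) = (\lambda-\lambda_0)^n \sum_{\ell=0}^\infty a_\ell (\lambda-\lambda_0)^\ell, \qquad a_\ell = \frac{g^{(\ell+n)}(\lambda_0)}{(\ell+n)!},
\end{equation*}
with \(a_0 = g^{(n)}(\lambda_0)/n! \neq 0\). Writing \(y(\lambda) = 1/\sum_{\ell} a_\ell(\lambda-\lambda_0)^\ell\), which is analytic at \(\lambda_0\), we get
\begin{equation*}
\frac{f(\lambda)}{g(\lambda)} = \frac{v(\lambda)}{(\lambda-\lambda_0)^n}, \qquad v(\lambda) = f(\lambda)\, y(\lambda).
\end{equation*}
Here \(v\) is analytic at \(\lambda_0\). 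Lemma~\ref{lemma2.1} then gives the regularized limit as \(v^{(n)}(\lambda_0)/n!\).

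Next, expand \(v^{(n)}\) by the Leibniz rule:
\begin{equation*}
v^{(n)}(\lambda_0) = \sum_{k=0}^n \binom{n}{k} f^{(n-k)}(\lambda_0)\, S_k ,
\end{equation*}
where \(S_k = y^{(k)}(\lambda_0)\) is supplied by Lemma~\ref{lemmaSk}. The \(k=0\) term uses \(S_0 = 1/a_0 = n!/g^{(n)}(\lambda_0)\). After dividing by \(n!\), it yields \(f^{(n)}(\lambda_0)/g^{(n)}(\lambda_0)\), which is the first term of \eqref{theorem2.1-001}.

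For \(k\ge 1\), substitute the formula \eqref{Sk}. We have \(S_k = (k!/a_0)\sum_i J_i^{(k)}!\prod_r (m_{ri}!)^{-1}(-a_r/a_0)^{m_{ri}}\), and \(a_r/a_0 = \frac{n!}{(r+n)!}\, g^{(r+n)}(\lambda_0)/g^{(n)}(\lambda_0)\). The overall prefactor becomes \(\frac{1}{n!}\cdot\frac{k!}{a_0} = k!/g^{(n)}(\lambda_0)\). Collecting the terms then reproduces exactly the double sum in \eqref{theorem2.1-001}, including the bracketed factor \(-\frac{n!}{(r+n)!}\frac{g^{(r+n)}}{g^{(n)}}\).

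There is no serious obstacle. The only point that needs care is justifying the factorization of \(g\) and the analyticity of \(y\) near \(\lambda_0\). This follows because \(a_0\neq 0\) makes the power series nonvanishing in a small disk, so its reciprocal is analytic there and Lemma~\ref{lemmaSk} applies. Beyond that, the work is bookkeeping of the factorials \(n!\), \(k!\) and the binomial coefficient. The hypothesis \(f(\lambda_0)\neq0\) is not actually needed for the identity; it only guarantees that the pole has order exactly \(n\).
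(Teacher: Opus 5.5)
Your proposal is correct and follows the paper's proof essentially step for step: factor $g(\lambda)=(\lambda-\lambda_0)^n h(\lambda)$, apply Lemma~\ref{lemma2.1} to $f/h$, expand by the Leibniz rule, and identify $a_\ell = g^{(\ell+n)}(\lambda_0)/(\ell+n)!$. The only cosmetic difference is that you take $y^{(k)}(\lambda_0)=S_k$ directly from Lemma~\ref{lemmaSk}, whereas the paper differentiates the series of Lemma~\ref{rep-of-an-inf-ser} term by term to extract $k!\,A_k$, which is the same quantity.
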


\begin{proof}
Since \(g(\lambda)\) has a zero of order \(n\) at \(\lambda_0\), we express it as
\begin{align}
    \label{g(lambda)-0}
    g(\lambda) = (\lambda-\lambda_0)^n \, h(\lambda),
\end{align}
where \(h(\lambda_0) \neq 0\). Then, rewrite the left-hand side of equation \eqref{theorem2.1-001} as
\begin{equation}
    \label{theorem2.1-003}
    \lim^\times_{\lambda \to \lambda_o} \frac{f(\lambda)}{g(\lambda)} = \lim^\times_{\lambda \to \lambda_o} \frac{f(\lambda)}{(\lambda - \lambda_0)^n h(\lambda)}.
\end{equation}
Apply Lemma~\ref{lemma2.1} to evaluate equation \eqref{theorem2.1-003} and obtain
\begin{equation}
    \label{theorem2.1-004}
    \lim^\times_{\lambda \to \lambda_o} \frac{f(\lambda)}{g(\lambda)} = \frac{1}{n!} \frac{d^n}{d\lambda^n} \left[ \frac{f(\lambda)}{h(\lambda)} \right]_{\lambda = \lambda_0}.
\end{equation}
We then use the Leibniz rule for differentiation and simplify to get
\begin{equation}
    \label{theorem2.1-005}
    \lim^\times_{\lambda \to \lambda_o} \frac{f(\lambda)}{g(\lambda)} = \frac{1}{n!}  \left[ \, \frac{f^{(n)}(\lambda_0)}{h(\lambda_0)} + \sum_{k=1}^n \binom{n}{k} f^{(n-k)}(\lambda_0) \, \frac{d^k}{d\lambda^k} \left[ \frac{1}{h(\lambda)}  \right]_{\lambda = \lambda_0} \, \right] .
\end{equation}

From equation \eqref{theorem2.1-005}, It is apparent that evaluating the regularized limit of \(w(\lambda)\) reduces to solving the \(n^{\text{th}}\) derivative of \(1/h(\lambda)\). Let us represent \(h(\lambda)\) as a power series, so that its reciprocal becomes
\begin{equation}
    \label{1/h(lambda)}
    \frac{1}{h(\lambda)} = \frac{1}{\sum_{\ell=0}^\infty a_\ell (\lambda - \lambda_0)^\ell},
\end{equation}
with \(a_0 \neq 0\). By applying Lemma~\ref{rep-of-an-inf-ser}, we represent the reciprocal of the infinite series in equation \eqref{1/h(lambda)} as
\begin{align}
    \frac{1}{\sum_{\ell=0}^\infty a_\ell (\lambda - \lambda_0)^\ell} =  \sum_{\ell=0}^\infty \;A_\ell (\lambda - \lambda_0)^\ell,
\end{align}
where
\begin{align}
    A_\ell = \frac{1}{a_0} \; \sum_{i=1}^{p(\ell)}  J^{(\ell)}_i ! \, \prod^{\ell}_{r=1} \frac{1}{m_{ri} !} \left[ - \frac{a_r}{a_0} \right]^{m_{ri}} .
\end{align}
We substitute the full expression of equation \eqref{1/h(lambda)} into equation \eqref{theorem2.1-005} and get
\begin{align}
    \label{reg-lim-1}
    \begin{split}
    \lim^\times_{\lambda \to \lambda_o} \frac{f(\lambda)}{g(\lambda)} =  & \frac{1}{n!} \left[ \frac{f^{(n)}(\lambda_0)}{h(\lambda_0)} +  \sum_{k=1}^n \binom{n}{k} f^{(n-k)}(\lambda_0)  \right. \\
    & \left. \times \; \frac{d^k}{d\lambda^k} \left[ \sum_{\ell=0}^\infty \; \frac{(\lambda - \lambda_0)^\ell}{a_0} \; \sum_{i=1}^{p(\ell)}  J^{(\ell)}_i ! \, \prod^{\ell}_{r=1} \frac{1}{m_{ri} !} \left[ - \frac{a_r}{a_0} \right]^{m_{ri}}  \right]_{\lambda = \lambda_0} \right].
    \end{split}
\end{align}
We perform the \(k^{\text{th}}\) derivative in equation \eqref{reg-lim-1} and simplify the result to obtain
\begin{align}
    \label{reg-lim-2}
    \begin{split}
    \lim^\times_{\lambda \to \lambda_o} \frac{f(\lambda)}{g(\lambda)} =  & \frac{1}{n!} \left[ \frac{f^{(n)}(\lambda_0)}{h(\lambda_0)} +  \frac{1}{a_0} \sum_{k=1}^n \binom{n}{k} f^{(n-k)}(\lambda_0)  \right. \\
    & \left. \times \; \left[ \sum_{\ell=k}^\infty \; \frac{\ell !}{(\ell - k)!} \; (\lambda - \lambda_0)^{\ell-k} \; \sum_{i=1}^{p(\ell)}  J^{(\ell)}_i ! \, \prod^{\ell}_{r=1} \frac{1}{m_{ri} !} \left[ - \frac{a_r}{a_0} \right]^{m_{ri}}  \right]_{\lambda = \lambda_0} \right].
    \end{split}
\end{align}
The series represents a convergent power series expansion of the reciprocal of an analytic function \( h(\lambda) \), with \( h(\lambda_0) \neq 0 \). Therefore, \( 1/h(\lambda) \) is analytic in a neighborhood of \( \lambda_0 \), and term-by-term differentiation is valid. 

When we evaluate equation \eqref{reg-lim-2} at \(\lambda = \lambda_0\), only the term with \(\ell = k\) survives, reducing the expression to
\begin{align}
    \label{reg-lim-3}
    \lim^\times_{\lambda \to \lambda_o} \frac{f(\lambda)}{g(\lambda)} =  \frac{1}{n!} \left[ \frac{f^{(n)}(\lambda_0)}{h(\lambda_0)} +  \frac{1}{a_0} \sum_{k=1}^n k! \, \binom{n}{k} f^{(n-k)}(\lambda_0) \sum_{i=1}^{p(k)}  J^{(k)}_i ! \, \prod^{k}_{r=1} \frac{1}{m_{ri} !} \left[ - \frac{a_r}{a_0} \right]^{m_{ri}} \right].
\end{align}
To simplify equation \eqref{reg-lim-3} further and eliminate unnecessary variables, we expand \(g(\lambda)\) about \(\lambda_0\) as
\begin{align}
    \label{g(lambda)}
    g(\lambda) = \sum_{\ell=0}^\infty \frac{g^{(\ell)} (\lambda_0)}{\ell!} \,(\lambda - \lambda_0)^\ell.
\end{align}
By imposing the condition that \(g(\lambda)\) has a zero of order \(n\) at \(\lambda_0\), we rewrite the series as
\begin{align}
    \label{g(lambda)-1}
    g(\lambda) = \sum_{\ell=0}^\infty \frac{g^{(\ell +n)} (\lambda_0)}{(\ell +n)!} \, (\lambda - \lambda_0)^{\ell +n}.
\end{align}
Using equations \eqref{g(lambda)-0} and \eqref{g(lambda)-1}, we identify \(h(\lambda)\) as
\begin{align}
    \label{h(lambda)-1}
    h(\lambda) = \sum_{\ell=0}^\infty \frac{g^{(\ell +n)} (\lambda_0)}{(\ell +n)!} \, (\lambda - \lambda_0)^{\ell},
\end{align}
with
\begin{align}
    \label{h(lambda)-2}
    a_\ell = \frac{g^{(\ell +n)} (\lambda_0)}{(\ell +n)!}.
\end{align}
Substituting equations \eqref{h(lambda)-1} and \eqref{h(lambda)-2} into equation \eqref{reg-lim-3} leads directly to equation \eqref{theorem2.1-001}.    
\end{proof}

Using the definitions of \(p(k)\), \(J_i^{(k)}\), and \(m_{ri}\), we can also write the second term of equation \eqref{theorem2.1-001} as a sum over integer partitions of \(k\) providing another expression given by
\begingroup
\allowdisplaybreaks
\begin{align}
\begin{split}
    \label{reg-lim-rest-sum}
    \lim^\times_{\lambda \to \lambda_o} \frac{f(\lambda)}{g(\lambda)} = & \frac{f^{(n)} (\lambda_o) }{g^{(n)} (\lambda_o) } + \frac{1}{g^{(n)} (\lambda_o)} \sum^{n}_{k=1} k! \binom{n}{k} \,  f^{(n-k)}(\lambda_o) \\
    & \hspace{1.5cm} \times \sum^{k}_{t=1} (-1)^t \sum_{\substack{r_1 + \cdots + r_t = k \\ r_i \in \mathbb{N}}} \; \prod^{t}_{p=1} \frac{n!}{(n + r_p)!} \frac{g^{(n+r_p)}( \lambda_o)}{g^{(n)}( \lambda_o)}.
\end{split}
\end{align}
\endgroup

Using Theorem~\ref{theorem2.1}, the following corollaries can be determined.

\begin{corollary}
\label{reglim-n1}
    Let \(f(\lambda)\) be anaytic at \(\lambda_0\) with \(f(\lambda_0) \neq 0\) and \(\lambda_0\) be a simple zero of \(g(\lambda)\), then
    \begin{equation}
        \label{theorem2.1-007}
        \lim^{\times}_{\lambda \to \lambda_o} \frac{f(\lambda)}{g(\lambda)} = \frac{f'(\lambda_o)}{g'(\lambda_o)}-\frac{f(\lambda_o) g''(\lambda_o) }{2 g'(\lambda_o)^2}.
    \end{equation}
\end{corollary}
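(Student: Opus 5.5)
We obtain Corollary~\ref{reglim-n1} by specializing Theorem~\ref{theorem2.1} to $n=1$. With $n=1$ the leading term of \eqref{theorem2.1-001} is $f'(\lambda_0)/g'(\lambda_0)$. The outer sum over $k$ then contains only $k=1$, with prefactor $1!\binom{1}{1} f^{(0)}(\lambda_0) = f(\lambda_0)$, all divided by $g'(\lambda_0)$.

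The next step is the partition sum for $k=1$. Since $p(1)=1$, the only partition is $1 = 1\cdot m_{11}$ with $m_{11}=1$, so $J_1^{(1)} = 1$ and $J_1^{(1)}! = 1$. The product over $r$ has the single factor $r=1$, namely
\begin{equation*}
\frac{1}{1!}\left[-\frac{1!}{2!}\frac{g''(\lambda_0)}{g'(\lambda_0)}\right]^{1} = -\frac{g''(\lambda_0)}{2g'(\lambda_0)}.
\end{equation*}

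Combining these gives
\begin{equation*}
\frac{f'(\lambda_0)}{g'(\lambda_0)} + \frac{f(\lambda_0)}{g'(\lambda_0)}\left(-\frac{g''(\lambda_0)}{2g'(\lambda_0)}\right),
\end{equation*}
which is exactly \eqref{theorem2.1-007}.

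The only point needing care is the bookkeeping of the partition data $p(1)$, $J^{(1)}_1$, $m_{11}$ in the degenerate case $k=1$. To check the signs and factorials independently, we also verify the result directly from Lemma~\ref{lemma2.1}. Write $g(\lambda)=(\lambda-\lambda_0)h(\lambda)$ with $h(\lambda_0)=g'(\lambda_0)$ and $h'(\lambda_0)=g''(\lambda_0)/2$. The regularized limit is then $(f/h)'(\lambda_0) = f'/h - f h'/h^2$ evaluated at $\lambda_0$, which reproduces the same formula.
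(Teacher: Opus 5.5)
Your proposal is correct and follows the paper's route: the corollary is obtained by setting $n=1$ in Theorem~\ref{theorem2.1}, where the single partition of $k=1$ gives the factor $-g''(\lambda_0)/(2g'(\lambda_0))$. Your cross-check via Lemma~\ref{lemma2.1}, using $h(\lambda_0)=g'(\lambda_0)$ and $h'(\lambda_0)=g''(\lambda_0)/2$, is the $n=1$ case of the first step of the paper's proof of that theorem, and it confirms the signs and factorials.
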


\begin{corollary}
\label{reglim-n2}
    Let \(f(\lambda)\) be anaytic at \(\lambda_0\) with \(f(\lambda_0) \neq 0\) and \(\lambda_0\) be a double zero of \(g(\lambda)\), then
    \begin{equation}
        \label{theorem2.1-009}
        \lim^{\times}_{\lambda \to \lambda_o} \frac{f(\lambda)}{g(\lambda)} = \frac{f''(\lambda_o)}{g''(\lambda_o)}-\frac{2 f'(\lambda_o) g^{(3)}(\lambda_o) }{3 g''(\lambda_o)^2}+ f(\lambda_o) \left(\frac{2 g^{(3)}(\lambda_o)^2}{9 g''(\lambda_o)^3}-\frac{g^{(4)}(\lambda_o)}{6 g''(\lambda_o)^2}\right).
    \end{equation}
\end{corollary}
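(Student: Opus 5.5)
Corollary~\ref{reglim-n2} is the case $n=2$ of Theorem~\ref{theorem2.1}. The plan is to substitute $n=2$ into equation~\eqref{theorem2.1-001}, or equivalently into the partition-sum form~\eqref{reg-lim-rest-sum}, and enumerate the partitions of $k=1$ and $k=2$ explicitly. To keep the bookkeeping light, introduce the shorthand
\begin{equation*}
    b_r = -\frac{2!}{(r+2)!}\frac{g^{(r+2)}(\lambda_0)}{g''(\lambda_0)},
\end{equation*}
so that $b_1 = -\frac{g^{(3)}(\lambda_0)}{3 g''(\lambda_0)}$ and $b_2 = -\frac{g^{(4)}(\lambda_0)}{12 g''(\lambda_0)}$.

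Next, evaluate the inner partition sums. For $k=1$ there is one partition, $m_{11}=1$ with $J=1$, so the inner sum is $b_1$. For $k=2$ there are two partitions. The partition $1+1$ has $m_{11}=2$, $J=2$, and contributes $2!\cdot\frac{1}{2!}\,b_1^2 = b_1^2$. The partition $2$ has $m_{22}=1$, $J=1$, and contributes $b_2$. The inner sum for $k=2$ is therefore $b_1^2+b_2$.

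Then assemble the outer sum with the weights $k!\binom{2}{k}$: the $k=1$ weight is $2$ and the $k=2$ weight is $2$. This gives
\begin{equation*}
    \lim^{\times}_{\lambda\to\lambda_0}\frac{f}{g} = \frac{f''(\lambda_0)}{g''(\lambda_0)} + \frac{1}{g''(\lambda_0)}\Bigl[ 2 f'(\lambda_0)\, b_1 + 2 f(\lambda_0)\bigl(b_1^2+b_2\bigr)\Bigr].
\end{equation*}
Substituting the values of $b_1$ and $b_2$ yields the three terms of~\eqref{theorem2.1-009}:
\begin{equation*}
    -\frac{2 f'(\lambda_0)\, g^{(3)}(\lambda_0)}{3\, g''(\lambda_0)^2}, \qquad \frac{2 f(\lambda_0)\, g^{(3)}(\lambda_0)^2}{9\, g''(\lambda_0)^3}, \qquad -\frac{f(\lambda_0)\, g^{(4)}(\lambda_0)}{6\, g''(\lambda_0)^2}.
\end{equation*}

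There is no real obstacle here. The only points needing care are the sign conventions and the factorial normalization $n!/(r+n)!$ with $n=2$ inside $b_r$. As a cross-check, I would verify the result directly from Lemma~\ref{lemma2.1}. Write $g=(\lambda-\lambda_0)^2 h$ with
\begin{equation*}
    h(\lambda_0)=\frac{g''(\lambda_0)}{2}, \qquad h'(\lambda_0)=\frac{g^{(3)}(\lambda_0)}{6}, \qquad h''(\lambda_0)=\frac{g^{(4)}(\lambda_0)}{12}.
\end{equation*}
Then compute $\frac{1}{2}(f/h)''(\lambda_0)$ by the quotient rule and confirm that it reproduces the same expression.
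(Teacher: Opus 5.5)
Your proposal is correct and takes the same route as the paper, which obtains this corollary simply by setting $n=2$ in Theorem~\ref{theorem2.1}. Your partition sums (inner sums $b_1$ and $b_1^2+b_2$, each with outer weight $2$) reproduce all three terms of \eqref{theorem2.1-009}, and your cross-check via $\tfrac{1}{2}(f/h)''(\lambda_0)$ is also correct: it is just equation \eqref{theorem2.1-004} from the paper's proof of Theorem~\ref{theorem2.1}, evaluated directly.
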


\begin{corollary}
\label{reglim-n3}
    Let \(f(\lambda)\) be anaytic at \(\lambda_0\) with \(f(\lambda_0) \neq 0\) and \(\lambda_0\) be a zero of order 3 of \(g(\lambda)\), then
    \begin{align}
        \label{reg-lim-n3}
        &\lim^{\times}_{\lambda \to \lambda_o} \frac{f(\lambda)}{g(\lambda)} = \frac{f^{(3)}(\lambda_o)}{g^{(3)}(\lambda_o)}-\frac{3 f''(\lambda_o) g^{(4)}(\lambda_o) }{4 g^{(3)}(\lambda_o)^2}+ f'(\lambda_o) \left(\frac{3 g^{(4)}(\lambda_o)^2}{8 g^{(3)}(\lambda_o)^3}-\frac{3 g^{(5)}(\lambda_o)}{10 g^{(3)}(\lambda_o)^2}\right) \\
        &\hspace{3cm} + f(\lambda_o) \left(-\frac{g^{(6)}(\lambda_o)}{20 g^{(3)}(\lambda_o)^2}-\frac{3 g^{(4)}(\lambda_o)^3}{32 g^{(3)}(\lambda_o)^4}+\frac{3 g^{(5)}(\lambda_o) g^{(4)}(\lambda_o)}{20 g^{(3)}(\lambda_o)^3}\right). \nonumber
    \end{align}
\end{corollary}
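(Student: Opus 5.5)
The plan is to specialize Theorem~\ref{theorem2.1} to $n=3$. The sum over $k$ then runs over $k=1,2,3$, and each partition sum can be written out by hand.

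For brevity write $b_r = -\frac{3!}{(r+3)!}\frac{g^{(r+3)}(\lambda_0)}{g^{(3)}(\lambda_0)}$. This gives
\begin{equation*}
b_1=-\frac{g^{(4)}}{4g^{(3)}},\qquad b_2=-\frac{g^{(5)}}{20g^{(3)}},\qquad b_3=-\frac{g^{(6)}}{120g^{(3)}},
\end{equation*}
with every derivative evaluated at $\lambda_0$. The inner sum $\sum_{i} J_i^{(k)}!\prod_r b_r^{m_{ri}}/m_{ri}!$ is worked out partition by partition.
\begin{itemize}
\item For $k=1$ there is one partition, giving $b_1$.
\item For $k=2$ the partitions $\{2\}$ and $\{1,1\}$ give $b_2+b_1^2$. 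The partition $\{1,1\}$ contributes $2!\,b_1^2/2!=b_1^2$.
\item For $k=3$ the partitions $\{3\}$, $\{2,1\}$ and $\{1,1,1\}$ give $b_3+2b_1b_2+b_1^3$.
\end{itemize}
Equivalently, one can read these off \eqref{reg-lim-rest-sum} as signed sums over compositions of $k$.

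Next, multiply by the prefactors $k!\binom{3}{k}$, which are $3$, $6$ and $6$ for $k=1,2,3$, and by $f^{(3-k)}(\lambda_0)/g^{(3)}(\lambda_0)$. Each term is then checked against the stated formula:
\begin{itemize}
\item $k=1$: $3b_1 f''/g^{(3)}$ gives $-\frac{3f''g^{(4)}}{4(g^{(3)})^2}$.
\item $k=2$: $6(b_2+b_1^2)f'/g^{(3)}$ gives $-\frac{3g^{(5)}}{10(g^{(3)})^2}+\frac{3(g^{(4)})^2}{8(g^{(3)})^3}$ as the coefficient of $f'$.
\item $k=3$: $6(b_3+2b_1b_2+b_1^3)f/g^{(3)}$ gives $-\frac{g^{(6)}}{20(g^{(3)})^2}+\frac{3g^{(4)}g^{(5)}}{20(g^{(3)})^3}-\frac{3(g^{(4)})^3}{32(g^{(3)})^4}$ as the coefficient of $f$.
\end{itemize}
The leading term $f^{(3)}/g^{(3)}$ comes directly from the theorem. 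Together these reproduce \eqref{reg-lim-n3}.

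The only real obstacle is bookkeeping: enumerating the partitions of $3$ correctly, with multiplicities and the $J_i^{(k)}!$ weights. In particular, the mixed partition $\{2,1\}$ carries weight $2$, and this is what produces the coefficient $\tfrac{3}{20}$.

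As a sanity check, I would compare the result with an independent computation. Write $g=(\lambda-\lambda_0)^3 h$ with $h=\sum_\ell a_\ell(\lambda-\lambda_0)^\ell$ and $a_\ell = g^{(\ell+3)}(\lambda_0)/(\ell+3)!$. Then extract the coefficient of $(\lambda-\lambda_0)^3$ in $f/h$ using Lemma~\ref{lemma2.1}.
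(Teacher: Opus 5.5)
Your proposal is correct and follows the paper's route: the corollary is stated as a direct specialization of Theorem~\ref{theorem2.1} to $n=3$. Your values $b_1=-g^{(4)}/(4g^{(3)})$, $b_2=-g^{(5)}/(20g^{(3)})$ and $b_3=-g^{(6)}/(120g^{(3)})$, the partition sums $b_1$, $b_2+b_1^2$ and $b_3+2b_1b_2+b_1^3$, and the prefactors $3,6,6$ all check out and reproduce every coefficient in \eqref{reg-lim-n3}.
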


\begin{corollary}
\label{reglim-n4}
    Let \(f(\lambda)\) be anaytic at \(\lambda_0\) with \(f(\lambda_0) \neq 0\) and \(\lambda_0\) be a zero of order 4 of \(g(\lambda)\), then
    \begin{align}
        \label{reg-lim-n4}
        &\lim^{\times}_{\lambda \to \lambda_o} \frac{f(\lambda)}{g(\lambda)} = \frac{f^{(4)}(\lambda_o)}{g^{(4)}(\lambda_o)}-\frac{4  f^{(3)}(\lambda_o) g^{(5)}(\lambda_o) }{5 g^{(4)}(\lambda_o)^2}+ f''(\lambda_o) \left(\frac{12 g^{(5)}(\lambda_o)^2}{25 g^{(4)}(\lambda_o)^3}-\frac{2 g^{(6)}(\lambda_o)}{5 g^{(4)}(\lambda_o)^2}\right)   \\
        &\hspace{2.5cm}+ f'(\lambda_o) \left(-\frac{4 g^{(7)}(\lambda_o)}{35 g^{(4)}(\lambda_o)^2}-\frac{24 g^{(5)}(\lambda_o)^3}{125 g^{(4)}(\lambda_o)^4}+\frac{8 g^{(6)}(\lambda_o) g^{(5)}(\lambda_o)}{25 g^{(4)}(\lambda_o)^3}\right)  \nonumber \\   
        &\hspace{3cm} + f(\lambda_o) \left(-\frac{g^{(8)}(\lambda_o)}{70 g^{(4)}(\lambda_o)^2}+\frac{2 g^{(6)}(\lambda_o)^2}{75 g^{(4)}(\lambda_o)^3}+\frac{24 g^{(5)}(\lambda_o)^4}{625 g^{(4)}(\lambda_o)^5}\right.  \nonumber \\
        &\hspace{5cm} \left. +\frac{8 g^{(7)}(\lambda_o) g^{(5)}(\lambda_o)}{175 g^{(4)}(\lambda_o)^3} -\frac{12 g^{(6)}(\lambda_o) g^{(5)}(\lambda_o)^2}{125 g^{(4)}(\lambda_o)^4}\right). \nonumber
    \end{align}
\end{corollary}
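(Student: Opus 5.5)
The plan is to specialize Theorem~\ref{theorem2.1} to $n=4$ and expand the inner partition sums explicitly. It is slightly more convenient to work from the equivalent composition form \eqref{reg-lim-rest-sum}. Set $b_r = \frac{4!}{(4+r)!}\frac{g^{(4+r)}(\lambda_0)}{g^{(4)}(\lambda_0)}$, so that
\[
b_1 = \frac{g^{(5)}}{5g^{(4)}}, \quad b_2 = \frac{g^{(6)}}{30 g^{(4)}}, \quad b_3=\frac{g^{(7)}}{210 g^{(4)}}, \quad b_4 = \frac{g^{(8)}}{1680 g^{(4)}},
\]
with all derivatives evaluated at $\lambda_0$. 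Equivalently, $1/h(\lambda) = \frac{1}{a_0}\sum_k c_k(\lambda-\lambda_0)^k$ with $h$ as in \eqref{h(lambda)-1} and $a_0=g^{(4)}(\lambda_0)/4!$. By Lemma~\ref{rep-of-an-inf-ser}, the normalized coefficients of $1/h$ are
\[
c_k=\sum_{t=1}^k (-1)^t\sum_{r_1+\cdots+r_t=k}\prod_p b_{r_p}.
\]

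First I would compute $c_1,\dots,c_4$ by listing compositions (or, equivalently, by the recursion $c_k = -\sum_{r=1}^k b_r c_{k-r}$ with $c_0=1$):
\begin{align*}
c_1 &= -b_1, \qquad c_2 = b_1^2 - b_2, \qquad c_3 = -b_1^3 + 2b_1b_2 - b_3,\\
c_4 &= b_1^4 - 3b_1^2b_2 + b_2^2 + 2b_1b_3 - b_4 .
\end{align*}
Then, by \eqref{reg-lim-rest-sum} with $n=4$, the regularized limit equals
\[
\frac{f^{(4)}}{g^{(4)}} + \frac{1}{g^{(4)}}\sum_{k=1}^4 k!\binom{4}{k} f^{(4-k)}(\lambda_0)\, c_k .
\]
The prefactors $k!\binom4k$ are $4, 12, 24, 24$ for $k=1,2,3,4$.

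Next comes the substitution, one coefficient at a time.
\begin{itemize}
\item[(i)] The $f'''$ term is $-4b_1/g^{(4)} = -\frac{4g^{(5)}}{5(g^{(4)})^2}$.
\item[(ii)] The $f''$ term is $12(b_1^2-b_2)/g^{(4)}$. This gives $\frac{12 (g^{(5)})^2}{25 (g^{(4)})^3}$ and $-\frac{12}{30}\frac{g^{(6)}}{(g^{(4)})^2} = -\frac{2g^{(6)}}{5(g^{(4)})^2}$.
\item[(iii)] The $f'$ term is $24(-b_1^3+2b_1b_2-b_3)/g^{(4)}$. This gives $-\frac{24}{125}$, then $\frac{48}{150} = \frac{8}{25}$, then $-\frac{24}{210} = -\frac{4}{35}$, each multiplying the appropriate monomial.
\item[(iv)] The $f$ term is $24c_4/g^{(4)}$. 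This gives the coefficients $\frac{24}{625}$, $-\frac{72}{750} = -\frac{12}{125}$, $\frac{24}{900} = \frac{2}{75}$, $\frac{48}{1050} = \frac{8}{175}$, and $-\frac{24}{1680} = -\frac{1}{70}$.
\end{itemize}
These match the stated formula term by term.

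The main obstacle is purely bookkeeping: getting $c_4$ right, in particular the coefficient $-3$ of $b_1^2b_2$ (from the three compositions $1+1+2$, $1+2+1$, $2+1+1$) and the coefficient $2$ of $b_1b_3$. I would double-check $c_4$ with the recursion $c_4=-(b_1c_3+b_2c_2+b_3c_1+b_4)$ before the final reduction of fractions.

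As a sanity check, I would verify the whole formula on the case $g=(\lambda-\lambda_0)^4$. There all $b_r=0$ and Lemma~\ref{lemma2.1} gives $f^{(4)}/4!$, which agrees with $f^{(4)}/g^{(4)}$ since $g^{(4)}=4!$.
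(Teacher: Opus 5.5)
Your proposal is correct and follows essentially the paper's route: the paper obtains this corollary by specializing Theorem~\ref{theorem2.1} (equivalently its composition form \eqref{reg-lim-rest-sum}) to $n=4$. Your $b_r$, $c_1,\dots,c_4$, the prefactors $k!\binom{4}{k}$, and all the reduced fractions agree with the stated formula, and the recursion $c_k=-\sum_{r=1}^k b_r c_{k-r}$ is a sound independent check on $c_4$.
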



\section{Contour Integral Representations of Finite-part Integrals with logarithmic Singularities}
\label{contour-integral-fpi}

It is shown in~\cite{galapon2023} that a suitable contour integral representation offers an effective means of evaluating finite-part integrals. In particular, this representation is used to compute the finite-part of Mellin-type divergent integrals in the absence of logarithmic singularities. The representation takes the form
\begin{equation}
    \label{eq3.1}
    \bbint{0}{a} \frac{k(t)}{t^\lambda} \,\mathrm{d}t = \frac{1}{(e^{-2\pi i \lambda} - 1)} \int_C \frac{k(z)}{z^\lambda} \,\mathrm{d}z,
\end{equation}
where the contour \( C \), illustrated in Figure~\ref{fig:3-001}, straddles the branch cut of \( z^{-\lambda} \) and begins and ends at the point \( a \). The contour neither encloses any pole nor crosses any branch cut of \( k(z) \). Equation \eqref{eq3.1} applies to all non-integer values of \( \lambda \) in the half-plane \( \mathrm{Re}(\lambda) \geq 1 \) and serves as the starting point of this section.

\begin{figure}[t]
    \centering
    \includegraphics[width=9cm]{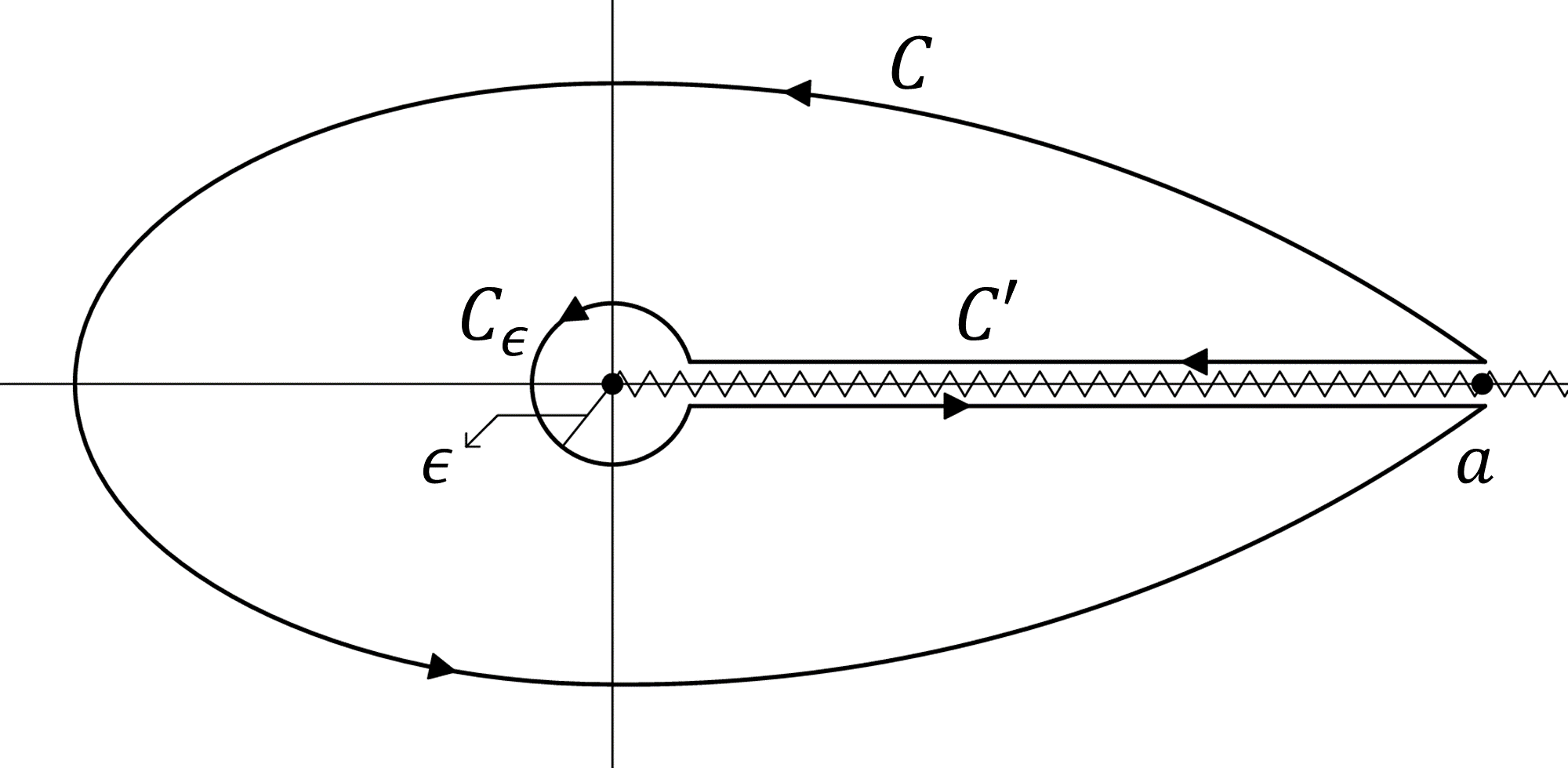}
    \caption{Contour \(C\) deformed into contour \(C'\). This contour of integration plays a pivotal role in developing the contour integral representation of the finite-part integral with logarithmic singularities. It does not enclose any pole or cross any branch cut of \(k(z)\). The branch cut for \(z^\lambda\) lies along the positive real axis.}
    \label{fig:3-001}
\end{figure}

We can extend this result to accommodate the presence of logarithmic singularities by exploiting the relationship between the finite-part integral and the Mellin transform, as established in 
\cite{galapon2023}:
\begin{equation}
    \label{eq3.2}
    \bbint{0}{a} \frac{k(t)\ln^n{t}}{t^{\lambda}} \mathrm{d}t =
        \begin{cases}
        \displaystyle \int_0^a \frac{k(t)\ln^n{t}}{t^{\lambda}} \mathrm{d}t, & d<\mathrm{Re}(\lambda)<1, \\[10pt]
        \displaystyle \mathrm{AC} \int_0^a \frac{k(t)\ln^n{t}}{t^{\lambda}} \mathrm{d}t, & \mathrm{Re}(\lambda) \geq 1,\ \lambda \notin \mathbb{Z}, \\[10pt]
        \displaystyle \lim^\times_{\lambda \to \mathfrak{b} } \mathrm{AC} \int_0^a \frac{k(t)\ln^n{t}}{t^{\lambda}} \mathrm{d}t, &  \mathrm{Re}(\lambda) \geq 1,\ \lambda \in \mathbb{N}.
        \end{cases}
\end{equation}
Here, if \( a < \infty \), then \( d = -\infty \). Conversely, if \( a = \infty \), then \( d \) may be finite or equal to negative infinity, depending on the behavior of \( k(t) \) as \( t \to \infty \). The notation AC refers to analytic continuation. The function \(k(t)\) satisfies the following conditions: \(k(0) \neq 0\); it admits a complex extension \(k(z)\) that is analytic in the interval \([0, a)\); and it possesses a Mellin transform. We impose the same conditions in this section, except for the last one. In fact, we show that one can still determine the finite-part integral even when the Mellin transform of \(k(t)\) does not exist. The next section discusses this in detail through examples.

Equation~\eqref{eq3.2} outlines three scenarios based on the values of \(\lambda\), each dictating a specific connection between the finite-part integral and the Mellin transform. The Mellin transform \(\int_0^a t^{-\lambda} \, k(t)\ln^n{t} \, \mathrm{d}t\) exists in the strip \(\mathrm{Re} (\lambda) < 1\) for a finite \(a\) or in the strip \(d < \mathrm{Re}(\lambda) < 1\) for \(a=\infty\). If we apply the definition of the finite-part integral \(\bbint{0}{a} t^{-\lambda} \, k(t)\ln^n{t} \, \mathrm{d}t\) in this strip of analyticity, the divergent part (\(D_\epsilon\)) is zero. It means that when \(d < \mathrm{Re}(\lambda) < 1\), the finite-part integral coincides with the Mellin transform. For non-integer \(\mathrm{Re}(\lambda) \geq 1\), the finite-part integral corresponds to the analytic continuation of the Mellin transform. Finally, for integer \(\mathrm{Re}(\lambda) \geq 1\), the finite-part integral equals the regularized limit of the analytic continuation as \(\lambda \to \mathfrak{b}\) where \(\mathfrak{b} \in \mathbb{N}\).

The finite-part integral given in equation \eqref{eq3.1} assumes a representation given by~\cite{galapon2023}
\begin{equation}
    \bbint{0}{a} \frac{k(t)}{t^{\lambda}} \mathrm{d}t = \int_{\epsilon}^a \frac{k(t)}{t^\lambda} \mathrm{d}t + \sum_{l=0}^\infty a_l \frac{\epsilon^{l-\lambda+1}}{l-\lambda+1}
\end{equation}
in the region \(\mathrm{Re} (\lambda) \geq 1, \; \lambda \notin \mathbb{Z}\) for \(\epsilon < \rho_o, \, a \) where \(\rho_o\) is the distance of the singularity of \(k(z)\) nearest to the origin. Given these conditions, the complex extension \(k(z)\) of \(k(t)\) can be represented by the power series \(k(z) = \sum_{l = 0}^\infty a_l z^l\). To extend this to cases involving logarithmic singularities, we invoke the identity~\cite{galapon2023} 
\begin{equation}
    \label{eq3.3}
    \bbint{0}{a} \frac{k(t)\ln^n{t}}{t^{\lambda}} \mathrm{d}t = (-1)^n \frac{d^n}{d\lambda^n} \bbint{0}{a} \frac{k(t)}{t^{\lambda}} \mathrm{d}t,
\end{equation}
which holds for any positive integer \(n\). The right-hand side integral admits differentiation under the integral sign to any order, and, for the associated infinite series, permits term-by-term differentiation to the same extent under the assumptions that \(k(t)\) is continuous on \([\epsilon, a]\) with \(0< \epsilon < a \), the integral converges for the values of \(\lambda\), and the left-hand side exists. This paper classifies the contour integral representations into two categories based on whether \(\lambda\) is an integer or a non-integer.

Before we proceed, let us clarify the definitions of the complex logarithms employed in the following sections. We define the complex logarithm as \(\ln z = \ln |z| + i\,\mathrm{arg}\, z\), where the argument satisfies \(0 \leq \mathrm{arg}\, z < 2\pi\). It coincides with the real natural logarithm \(\ln t\) along the upper side of the positive real axis. On the other hand, the complex principal value logarithm is given by \(\mathrm{Log}\, z = \ln |z| + i\,\mathrm{Arg}\, z\) where \( -\pi < \mathrm{Arg}\, z \leq \pi \).

\subsection{Contour integral representation for noninteger \texorpdfstring{\(\lambda\)}{lambda}}
\label{contour-noninteger}
\begin{theorem}
    \label{theorem_3.1}
    Let $k(z)$ be analytic in the interval $[0,a]$. Then the contour integral representation of $\bbint{0}{a} t^{-\lambda} k(t) \ln^n{t}\,\mathrm{d}t$ for all non-integer \(\lambda\) in the half-plane \(\mathrm{Re}(\lambda) \geq 1\) is given by
    \begin{align}
    \label{theorem3.1}
        \begin{split}
        \bbint{0}{a} \frac{k(t) \ln^n t}{t^{\lambda}}\,\mathrm{d}t  = \sum^{n}_{j = 0} \binom{n}{j} (2 \pi i)^{n-j}  \, \beta_j (\lambda) \int_C \frac{k(z)}{z^\lambda} \ln^{j}{z} \mathrm{d}z,
        \end{split}
    \end{align}
    and \(\beta_j (\lambda)\) is given by
    \begin{align}
        \beta_j (\lambda) = \sum^{n-j}_{l =0} (-1)^l \; l ! \;\mathbf{S}^{n-j}_l \; \frac{\; e^{-2\pi i \lambda l}}{(e^{-2\pi i \lambda } - 1)^{l +1}}
    \end{align}
    where n is any positive integer and \(\mathbf{S}^{n-j}_l\) is the Stirling number of the second kind.
\end{theorem}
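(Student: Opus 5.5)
The plan is to start from the representation \eqref{eq3.1} for the logarithm-free finite-part integral and differentiate it $n$ times with respect to $\lambda$, using the identity \eqref{eq3.3}. Write
\[
G(\lambda)=\frac{1}{e^{-2\pi i\lambda}-1},\qquad I(\lambda)=\int_C \frac{k(z)}{z^{\lambda}}\,\mathrm{d}z ,
\]
so that $\bbint{0}{a} t^{-\lambda}k(t)\,\mathrm{d}t=G(\lambda)I(\lambda)$ for non-integer $\lambda$ with $\mathrm{Re}(\lambda)\ge 1$.

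First I will justify differentiating $I(\lambda)$ under the integral sign. The contour $C$ is a compact path that avoids the origin, does not enclose any singularity of $k(z)$, and begins and ends at $a$. On $C$ the integrand $k(z)z^{-\lambda}=k(z)e^{-\lambda\ln z}$ is jointly continuous in $(z,\lambda)$ and entire in $\lambda$. Here $\ln z$ is the branch with $0\le \arg z<2\pi$, which is the branch defining $z^{-\lambda}$ with its cut along the positive real axis. Hence
\[
\frac{d^m}{d\lambda^m}I(\lambda)=(-1)^m\int_C \frac{k(z)}{z^{\lambda}}\ln^m z\,\mathrm{d}z .
\]
Away from the integers $G$ is analytic. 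Applying the Leibniz rule together with \eqref{eq3.3} gives
\[
\bbint{0}{a}\frac{k(t)\ln^n t}{t^\lambda}\,\mathrm{d}t=(-1)^n\sum_{j=0}^n\binom{n}{j}\,G^{(n-j)}(\lambda)\,(-1)^j\int_C\frac{k(z)}{z^\lambda}\ln^j z\,\mathrm{d}z .
\]

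The main step, and the one I expect to be the only real obstacle, is a closed form for $G^{(p)}(\lambda)$. I will substitute $x=e^{-2\pi i\lambda}$, so that $\frac{d}{d\lambda}=-2\pi i\,x\frac{d}{dx}$ and therefore
\[
\frac{d^p}{d\lambda^p}=(-2\pi i)^p\Big(x\frac{d}{dx}\Big)^p .
\]
I will then invoke the classical operator identity
\[
\Big(x\frac{d}{dx}\Big)^p=\sum_{l=0}^p \mathbf{S}^p_l\, x^l\frac{d^l}{dx^l},
\]
which can be proved by induction using the recurrence $\mathbf{S}^{p+1}_l=l\,\mathbf{S}^p_l+\mathbf{S}^p_{l-1}$. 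Combining this with
\[
\frac{d^l}{dx^l}(x-1)^{-1}=(-1)^l\,l!\,(x-1)^{-l-1}
\]
yields $G^{(p)}(\lambda)=(-2\pi i)^p\,\beta$, where $\beta$ is exactly the expression $\beta_j(\lambda)$ of the statement with $p=n-j$.

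Finally I will collect the signs. The overall factor is
\[
(-1)^n(-1)^j(-2\pi i)^{n-j}=(-1)^n(-1)^j(-1)^{n-j}(2\pi i)^{n-j}=(2\pi i)^{n-j},
\]
which produces \eqref{theorem3.1}. The validity of \eqref{eq3.3} itself on the region $\mathrm{Re}(\lambda)\ge1$, $\lambda\notin\mathbb{Z}$, is taken from the earlier discussion. It rests on the fact that both sides are analytic continuations of the Mellin transform, so differentiation in $\lambda$ commutes with analytic continuation.
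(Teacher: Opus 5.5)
Your proof is correct, and for general $n$ it takes a genuinely different route from the paper. The paper uses repeated differentiation via \eqref{eq3.3} only to illustrate $n=1,2$. For general $n$ it argues as follows.
\begin{enumerate}
\item It posits the ansatz \eqref{eq3.11}.
\item It deforms $C$ into a keyhole to obtain \eqref{C-epsilon-09}, which expresses $\int_C k(z)z^{-\lambda}\ln^n z\,\mathrm{d}z$ through the finite-part integrals with $\ln^j t$, $j\le n$.
\item It solves this relation for the top power and inserts the ansatz for the lower powers, which gives the recurrence \eqref{eq3.18} for $G^n_l$.
\item It unrolls the recurrence into sums over compositions and recognizes these as $l!\,\mathbf{S}^{n-j}_l$ via \eqref{S2-gen-pow-series}.
\end{enumerate}

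You avoid the ansatz and the combinatorial unrolling altogether. The Leibniz rule applied to $G(\lambda)I(\lambda)$, together with the normal-ordering identity $(x\,\mathrm{d}/\mathrm{d}x)^p=\sum_l \mathbf{S}^p_l\, x^l\,\mathrm{d}^l/\mathrm{d}x^l$, produces the Stirling numbers at once. It also makes transparent that $(2\pi i)^{n-j}\beta_j(\lambda)=(-1)^{n-j}G^{(n-j)}(\lambda)$. In effect you prove the derivative formula of Theorem~\ref{nth-der} first and then read off the representation, whereas the paper deduces Theorem~\ref{nth-der} from the representation.

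Nothing is lost in rigor by your route, since the paper also relies on \eqref{eq3.3}: it uses it to write the small-circle contribution \eqref{C-epsilon-06}. What the paper's longer route buys is the intermediate relation \eqref{C-epsilon-09} and the recurrence structure. Both are reused in Section~\ref{Stieljes-Transform-Section} when the Stieltjes transform is reduced recursively.

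Your closing appeal to analytic continuation is the right way to justify differentiating on the boundary line $\mathrm{Re}(\lambda)=1$. By \eqref{C-epsilon-08} with $n=0$, the identity \eqref{eq3.1} in fact holds on all of $\mathbb{C}\setminus\mathbb{Z}$. That is an open set on which both sides are analytic, so the differentiation is legitimate there.
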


\begin{proof}
To prove this theorem, begin with the initial condition given in equation \eqref{eq3.1}, where there is no logarithmic singularity. Apply equation \eqref{eq3.3} with \(n=1\) to equation \eqref{eq3.1} to derive the contour integral representation of \(\bbint{0}{a} t^{-\lambda} k(t) \ln t\,\mathrm{d}t\), which is given by
\begin{align}
    \label{eq3.6}
    \bbint{0}{a} \frac{k(t) \ln{t}}{t^{\lambda}} \mathrm{d}t &= \int_C \frac{k(z) }{z^{\lambda } } \left[ \frac{\ln{z}}{(e^{-2 i \pi  \lambda }-1)} - \frac{2 i \pi  e^{-2 i \pi  \lambda }}{(e^{-2 i \pi  \lambda }-1)^2} \right]  \mathrm{d}z.
\end{align}
By repeatedly applying equation \eqref{eq3.3} to equation \eqref{eq3.1}, we obtain the desired contour integral representation of \(\bbint{0}{a} t^{-\lambda} k(t) \ln^n t\,\mathrm{d}t\). The case when \(n=2\) is expressed as
\begingroup
\allowdisplaybreaks
\begin{align}
    \label{eq3.7}
    \bbint{0}{a} \frac{k(t) \ln^2{t}}{t^{\lambda}} \mathrm{d}t &= \int_C \frac{k(z) }{z^{\lambda } } \left[ \frac{\ln^2{z}}{(e^{-2 i \pi  \lambda }-1)} - \frac{4 i \pi  e^{-2 i \pi  \lambda }\ln{z}}{(e^{-2 i \pi  \lambda }-1)^2}  \right. \\
    &\hspace{5cm} \left. - \frac{4 \pi ^2 e^{-2 i \pi  \lambda } \left(e^{-2 i \pi  \lambda }+1\right) }{\left(e^{-2 i \pi  \lambda } - 1\right)^3} \right]  \mathrm{d}z, \nonumber
\end{align}
\endgroup
We can obtain other cases of contour integral representation by the same method outlined above.

Observing equations \eqref{eq3.6}, \eqref{eq3.7}, and other cases with higher-order \(n\), we can conclude that the contour integral representation of \(\bbint{0}{\infty} t^{-\lambda} k(t) \ln^n{t} \, \mathrm{d}t\) satisfies the relation
\begin{equation}
    \label{eq3.11}
    \bbint{0}{a} \frac{k(t)\ln^n t}{t^{\lambda}}\,\mathrm{d}t = \sum_{l=0}^{n} \frac{1}{\left(e^{-2\pi i \lambda }-1\right)^{l+1}} \int_C \frac{k(z)}{z^{\lambda}} \, G^n_l(z)\, \mathrm{d}z
\end{equation}
where the functions \(G^n_l(z)\) must be determined. 

Let \(n=0\) in equation \eqref{eq3.11} and compare it to equation \eqref{eq3.1}; the initial condition is given by
\begin{equation}
    \label{eq3.12}
    G^0_0(z) = 1.
\end{equation}
To begin solving for \(G^n_l(z)\), consider the contour integral given by
\begin{equation}
    \int_C  \frac{k(z) \ln^n z}{z^{\lambda}}\,\mathrm{d}z,
\end{equation}
where the contour \(C\) is the same as the one shown in figure \ref{fig:3-001}. Deform the contour of integration from \(C\) to \(C'\), as shown in figure \ref{fig:3-001}, to obtain the equality
\begin{equation}
    \label{eq3.13}
    \int_C \frac{k(z) \ln^n z}{z^{\lambda}}\,\mathrm{d}z = \int_a^{\epsilon} \frac{k(t) \ln^n t}{t^{\lambda}}\,\mathrm{d}t + \int_{\epsilon}^a \frac{k(t) (\ln t + 2\pi i)^n}{t^{\lambda} e^{2\pi \lambda i}}\,\mathrm{d}t + \int_{C_{\epsilon}} \frac{k(z)\ln^n z}{z^{\lambda}}\,\mathrm{d}z .
\end{equation}
By applying the binomial theorem to the second term, equation \eqref{eq3.13} becomes
\begin{align}
    \label{eq3.14}
    \begin{split}
    \int_C\frac{k(z) \ln^n z}{z^{\lambda}}\,\mathrm{d}z &=  (e^{-2\pi\lambda i}-1) \int_\epsilon^a \frac{k(t)\ln^n t}{t^{\lambda}}  \,\mathrm{d}t \\
    &\quad + e^{-2\pi\lambda i} \sum_{j=0}^{n-1}\binom{n}{j} (2\pi i)^{n-j}  \int_\epsilon^a \frac{k(t) \ln^j t}{t^{\lambda}}\,\mathrm{d}t + \int_{C_{\epsilon}} \frac{k(z)\ln^n z}{z^{\lambda}}\,\mathrm{d}z .
    \end{split}
\end{align}

The integral along \(C_{\epsilon}\) vanishes as \(\epsilon \to 0\) for \(\mathrm{Re}(\lambda) < 1\), while it diverges for \(\mathrm{Re}(\lambda) \geq 1\). To explain this further, consider the third term of equation \eqref{eq3.14} for \(n = 0\). For sufficiently small \(\epsilon\), \(k(z)\) remains analytic on a region containing \(C_\epsilon\) about the origin. Then we can use the power series expansion of the function \(k(z) = \sum_{l = 0}^\infty a_l z^l\) and substitute the parametrization \(z = \epsilon e^{i \theta}\), where \(0 \leq \theta \leq 2\pi\), to get
\begin{equation}
    \label{C-epsilon}
    \int_{C_{\epsilon}} \frac{k(z)}{z^{\lambda}}\,\mathrm{d}z = \int_{C_{\epsilon}} \sum_{l = 0}^\infty a_l z^{l - \lambda} \, \mathrm{d}z = i \sum_{l = 0}^\infty a_l \epsilon^{l - \lambda + 1} \int_0^{2\pi} e^{i(l - \lambda + 1) \theta} \, \mathrm{d}\theta,
\end{equation}
and perform term-by-term integration. Evaluating the integral in equation \eqref{C-epsilon}, we get
\begin{equation}
    \label{C-epsilon-01}
    \int_{C_{\epsilon}} \frac{k(z)}{z^{\lambda}}\,\mathrm{d}z = - (e^{-2\pi \lambda i} - 1) \sum_{l = 0}^\infty a_l \frac{\epsilon^{l - \lambda + 1}}{\lambda - l - 1}.
\end{equation}
Let \(\lambda = \lambda_R + i \lambda_I\). For \(\mathrm{Re}(\lambda) < 1\), equation \eqref{C-epsilon-01} vanishes as \(\epsilon \to 0\). For the other case, \(\mathrm{Re}(\lambda) \geq 1\), equation \eqref{C-epsilon-01} can be expressed as
\begin{equation}
    \label{C-epsilon-02}
    \int_{C_{\epsilon}} \frac{k(z)}{z^{\lambda}}\,\mathrm{d}z = - (e^{-2\pi \lambda i} - 1) \Biggl[ \sum_{l = 0}^{\lfloor \lambda_R - 1 \rfloor} a_l \frac{\epsilon^{l - \lambda + 1}}{\lambda - l - 1} + \sum_{l = \lfloor \lambda_R - 1 \rfloor + 1}^{\infty} a_l \frac{\epsilon^{l - \lambda + 1}}{\lambda - l - 1} \Biggr].
\end{equation}
As \(\epsilon \to 0\), the second term of equation \eqref{C-epsilon-02} vanishes while the first term diverges.

From \cite{galapon2023}, we know that equation~\eqref{C-epsilon-02} represents the divergent part subtracted from the divergent integral in order to obtain the finite-part. Accordingly, the finite-part integral is given by
\begin{equation}
    \label{C-epsilon-04}
    \bbint{0}{a} \frac{k(t)}{t^\lambda} \, \mathrm{d}t = \frac{1}{e^{-2\pi i \lambda} - 1} \int_C \frac{k(z)}{z^{\lambda}}\,\mathrm{d}z = \lim_{\epsilon \to 0} \left[ \int_\epsilon^a \frac{k(t)}{t^\lambda} \, \mathrm{d}t - \sum_{l = 0}^{\lfloor \lambda_R - 1 \rfloor} a_l \frac{\epsilon^{l - \lambda + 1}}{\lambda - l - 1} \right],
\end{equation}
where the coefficients \( a_l \) are the terms in the local expansion of \( k(t) \) near the origin. The existence of the left-hand side implies that the limit exists.

We can use the same method to solve the integral around \(C_{\epsilon}\) for \(n = 1\). This gives us the following representation:
\begin{align}
    \label{C-epsilon-05}
    \begin{split}
    \int_{C_{\epsilon}} \frac{k(z) \ln z}{z^{\lambda}}\,\mathrm{d}z = - (e^{-2\pi  i \lambda } - 1) &  \Biggl[ \sum_{l = 0}^{\lfloor \lambda_R - 1 \rfloor} a_l \left( \frac{\epsilon^{l - \lambda + 1} \ln \epsilon}{\lambda - l - 1} + \frac{\epsilon^{l - \lambda + 1}}{(\lambda - l - 1)^2} \right) \\
    & + \sum_{l = \lfloor \lambda_R - 1 \rfloor + 1}^{\infty} a_l \left( \frac{\epsilon^{l - \lambda + 1} \ln \epsilon}{\lambda - l - 1} + \frac{\epsilon^{l - \lambda + 1}}{(\lambda - l - 1)^2} \right)  \Biggr] \\
    -2\pi i e^{-2\pi i \lambda} & \Biggl[ \sum_{l = 0}^{\lfloor \lambda_R - 1 \rfloor} a_l \frac{\epsilon^{l - \lambda + 1}}{\lambda - l - 1} + \sum_{l = \lfloor \lambda_R - 1 \rfloor + 1}^{\infty} a_l \frac{\epsilon^{l - \lambda + 1}}{\lambda - l - 1}  \Biggr].
    \end{split}
\end{align}

Equation \eqref{C-epsilon-05} represents the divergent part of the integral \(\int_\epsilon^a t^{-\lambda} k(t) \ln t \, \mathrm{d}t\) in the limit \(\epsilon \to 0\), for \(\mathrm{Re} (\lambda) \geq 1\). In fact, we can generalize the expression of the integral along \(C_{\epsilon}\) for arbitrary order \(n\) as follows:
\begingroup
\allowdisplaybreaks
\begin{align}
    \label{C-epsilon-06}
    \int_{C_{\epsilon}} \frac{k(z) \ln^n z}{z^{\lambda}} & \, \mathrm{d}z = - (e^{-2\pi i \lambda } - 1) \Biggl[ \; \sum_{l = 0}^{\lfloor \lambda_R - 1 \rfloor} a_l (-1)^n \frac{\mathrm{d}^n}{\mathrm{d}\lambda^n}\Biggl( \frac{\epsilon^{l - \lambda + 1}}{\lambda - l - 1} \Biggr) \\
    & \hspace{4cm} + \sum_{l = \lfloor \lambda_R - 1 \rfloor + 1}^{\infty} a_l (-1)^n \frac{\mathrm{d}^n}{\mathrm{d}\lambda^n}\Biggl( \frac{\epsilon^{l - \lambda + 1}}{\lambda - l - 1} \Biggr) \Biggr] \nonumber \\
    & - e^{-2\pi i\lambda} \sum_{j=0}^{n-1} \binom{n}{j} (2\pi i)^{n-j} \Biggl[ \; \sum_{l = 0}^{\lfloor \lambda_R - 1 \rfloor} a_l (-1)^j \frac{\mathrm{d}^j}{\mathrm{d}\lambda^j} \Biggl( \frac{\epsilon^{l - \lambda + 1}}{\lambda - l - 1} \Biggr) \nonumber \\
    & \hspace{4cm} + \sum_{l = \lfloor \lambda_R - 1 \rfloor + 1}^{\infty} a_l (-1)^j \frac{\mathrm{d}^j}{\mathrm{d}\lambda^j}\Biggl( \frac{\epsilon^{l - \lambda + 1}}{\lambda - l - 1} \Biggr) \Biggr]. \nonumber
\end{align}
\endgroup
This is a direct consequence of the relations and conditions stated in equation \eqref{eq3.3}.

Substituting equation \eqref{C-epsilon-06} into equation \eqref{eq3.14}, we obtain
\begingroup
\allowdisplaybreaks
\begin{align}
    \label{C-epsilon-07}
    \begin{split}
    \int_C \frac{k(z) \ln^n z}{z^{\lambda}}\,\mathrm{d}z =  (e^{-2\pi\lambda i}-1) & \Biggl[ \int_\epsilon^a \frac{k(t)\ln^n t}{t^{\lambda}}\,\mathrm{d}t - \sum_{l = 0}^{\lfloor \lambda_R - 1 \rfloor} a_l (-1)^n \frac{\mathrm{d}^n}{\mathrm{d}\lambda^n}\Biggl( \frac{\epsilon^{l - \lambda + 1}}{\lambda - l - 1} \Biggr) \\
    & \hspace{1.5cm} - \sum_{l = \lfloor \lambda_R - 1 \rfloor + 1}^{\infty} a_l (-1)^n \frac{\mathrm{d}^n}{\mathrm{d}\lambda^n}\Biggl( \frac{\epsilon^{l - \lambda + 1}}{\lambda - l - 1} \Biggr)  \Biggr] \\
    + e^{-2\pi\lambda i} \sum_{j=0}^{n-1} \binom{n}{j} (2\pi i)^{n-j} &  \Biggl[ \int_\epsilon^a \frac{k(t) \ln^j t}{t^{\lambda}}\,\mathrm{d}t - \sum_{l = 0}^{\lfloor \lambda_R - 1 \rfloor} a_l (-1)^j \frac{\mathrm{d}^j}{\mathrm{d}\lambda^j} \left( \frac{\epsilon^{l - \lambda + 1}}{\lambda - l - 1} \right)\\
    & \hspace{1.5cm} - \sum_{l = \lfloor \lambda_R - 1 \rfloor + 1}^{\infty} a_l (-1)^j \frac{\mathrm{d}^j}{\mathrm{d}\lambda^j}\Biggl( \frac{\epsilon^{l - \lambda + 1}}{\lambda - l - 1} \Biggr)  \Biggr].
    \end{split}
\end{align}
\endgroup
The terms inside the brackets represent the finite-part of the integral \(\int_\epsilon^a t^{-\lambda} k(t) \ln^j t \, \mathrm{d}t\) in the limit \(\epsilon \to 0\) for \(\mathrm{Re}(\lambda) \geq 1\); however when \(\mathrm{Re}(\lambda) < 1\), the integral converges and the summation vanishes. Since we are allowed to take small values of \(\epsilon\) without losing the equality, equation \eqref{eq3.14} yields two final forms:
\begin{align}
    \label{C-epsilon-08}
    \begin{split}
    \int_C \frac{k(z) \ln^n z}{z^{\lambda}}\,\mathrm{d}z = (e^{-2\pi\lambda i}-1) & \int_0^a \frac{k(t)\ln^n t}{t^{\lambda}}\,\mathrm{d}t \\
    + e^{-2\pi\lambda i} \sum_{j=0}^{n-1} \binom{n}{j} & (2\pi i)^{n-j} \int_0^a \frac{k(t) \ln^j t}{t^{\lambda}}\,\mathrm{d}t, \quad \text{for } \mathrm{Re}(\lambda) < 1,
    \end{split}
\end{align}
and
\begin{align}
    \label{C-epsilon-09}
    \begin{split}
    \int_C \frac{k(z) \ln^n z}{z^{\lambda}}\,\mathrm{d}z = (e^{-2\pi\lambda i}-1) & \bbint{0}{a} \frac{k(t)\ln^n t}{t^{\lambda}}\,\mathrm{d}t \\
    + e^{-2\pi\lambda i} \sum_{j=0}^{n-1} \binom{n}{j} & (2\pi i)^{n-j} \bbint{0}{a} \frac{k(t) \ln^j t}{t^{\lambda}}\,\mathrm{d}t, \quad \text{for } \mathrm{Re}(\lambda) \geq 1.
    \end{split}
\end{align}
We can substitute equation \eqref{eq3.11} into the second term of equations \eqref{C-epsilon-08} and \eqref{C-epsilon-09}, but within a different context as determined by the relation in equation \eqref{eq3.2}, and subsequently express the finite-part integral in terms of contour integrals. 

After performing the substitution, interchanging the summation properly, and adjusting the indices to match equation \eqref{eq3.11}, we arrive at the form
\begin{align}
    \label{eq3.16}
    \begin{split}
    \bbint{0}{a} \frac{k(t)\ln^n t}{t^{\lambda}}\,\mathrm{d}t = \frac{1}{e^{-2\pi\lambda i}-1} & \int_C \frac{k(z) \ln^n z}{z^{\lambda}}\,\mathrm{d}z \\
    - \sum_{l=1}^n \frac{1}{(e^{-2\pi\lambda i}-1)^{l+1}} & \int_C \frac{k(z)}{z^{\lambda}} \left[ e^{-2\pi\lambda i} \sum_{j=l-1}^{n-1} \binom{n}{j} (2\pi i)^{n-j} G_{l-1}^j(z) \right]\,\mathrm{d}z.
    \end{split}
\end{align}
By comparing equation \eqref{eq3.11} with equation \eqref{eq3.16}, we observe that the function \(G^n_l(z)\) satisfies the recurrence relation
\begin{equation}
    \label{eq3.18}
    G^n_l(z) = - e^{-2\pi\lambda i} \sum_{j=l-1}^{n-1} \binom{n}{j} (2\pi i)^{n-j} G^j_{l-1}(z), \quad l = 1, \dots, n,
\end{equation}
with 
\begin{equation}
    \label{eq3.17}
    G^n_0(z) = \ln^n z.
\end{equation}

Using the recurrence relation, we compute \(G_1^n(z)\) as
\begin{equation}
    \label{eq3.19}
    G_1^n(z) = - e^{-2\pi\lambda i} \sum_{j=0}^{n-1} \binom{n}{j} (2\pi i)^{n-j} \ln^j z, \quad n = 1, 2, \dots
\end{equation}
To derive \(G_2^n(z)\), we first express it in terms of \(G_1^n(z)\), and since \(G_1^n(z)\) can be written using \(G_0^n(z)\), the resulting expression involves a double summation:
\begingroup
\allowdisplaybreaks
\begin{align}
    \label{eq3.20}
    G_2^n(z) &= e^{-4\pi\lambda i} \sum_{j_1=1}^{n-1} \binom{n}{j_1} (2\pi i)^{n-j_1} \sum_{j_2=0}^{j_1-1} \binom{j_1}{j_2} (2\pi i)^{j_1-j_2} \ln^{j_2} z \\
    &= e^{-4\pi\lambda i} \sum_{j=0}^{n-2} \left[ \sum_{j_1=0}^{n-j-2} \binom{n}{j_1 + j + 1} \binom{j_1 + j + 1}{j} \right] (2\pi i)^{n-j} \ln^j z \nonumber \\
    &= e^{-4\pi\lambda i} \sum_{j=0}^{n-2} \left[ \binom{n}{j} \sum_{j_1=0}^{n-j-2} \frac{(n-j)!}{(n-j-j_1-1)! \, (j_1 + 1)!} \right] (2\pi i)^{n-j} \ln^j z \nonumber \\
    &= e^{-4\pi\lambda i} \sum_{j=0}^{n-2} \left[ \binom{n}{j} \sum_{\substack{r_1 + r_2 = n-j \\ r_i \in \mathbb{N}}} \frac{(n-j)!}{r_1! r_2!} \right] (2\pi i)^{n-j} \ln^j z \nonumber
\end{align}
\endgroup
The inner sum is proportional to the generalized power series representation of the Stirling number of the second kind, given by \cite{wolframStirlingS2}
\begin{align}
    \label{S2-gen-pow-series}
    \mathbf{S}^n_m = \frac{n!}{m!} \sum_{\substack{r_1 + \cdots + r_m = n \\ r_i \in \mathbb{N}}} \frac{1}{\prod_{j=1}^m r_j!}.
\end{align}
Substituting equation \eqref{S2-gen-pow-series} into equation \eqref{eq3.20}, we obtain
\begin{align}
    G_2^n(z) = e^{-4\pi\lambda i} \sum_{j=0}^{n-2} 2! \, \mathbf{S}^{n-j}_2 \binom{n}{j} (2\pi i)^{n-j} \ln^j z.
\end{align}

We now generalize this result for an arbitrary \(l\). Using the recurrence relation repeatedly, we derive
\begin{align}
    \label{gen-G(n,l)}
    G^n_l(z) &= \left( - e^{-2\pi i \lambda} \right)^l \sum_{j_1 = l-1}^{n-1} \binom{n}{j_1} \prod_{p=1}^{l-1} \sum_{j_{p+1} = l-p-1}^{j_p - 1} \binom{j_p}{j_{p+1}} (2\pi i)^{n-j_l} \ln^{j_l} z \\
    &= \left( - e^{-2\pi i \lambda} \right)^l \sum_{j=0}^{n-l} \left[ \binom{n}{j} \sum_{\substack{r_1 + \cdots + r_l = n-j \\ r_i \in \mathbb{N}}} \frac{(n-j)!}{\prod_{t=1}^l r_t!} \right] (2\pi i)^{n-j} \ln^j z \nonumber
\end{align}
Finally, using the definition of Stirling numbers of the second kind, we express \(G_l^n(z)\) as
\begin{equation}
    \label{eq3.23}
    G_l^n(z) = \left( - e^{-2\pi i \lambda} \right)^l \sum_{j=0}^{n-l} l! \, \mathbf{S}^{n-j}_l \binom{n}{j} (2\pi i)^{n-j} \ln^j z,
\end{equation}
where \(\mathbf{S}^{n-j}_l\) denotes the Stirling number of the second kind.

We substitute equation \eqref{eq3.23} into equation \eqref{eq3.11} and interchange the order of summation, making the appropriate index adjustments, to derive the final closed-form expression given in equation \eqref{theorem3.1}.
\end{proof}

By comparing equation \eqref{theorem3.1} with equations \eqref{eq3.3} and \eqref{eq3.1}, and simplifying the terms, we arrive at the following theorem.
\begin{theorem}
    \label{nth-der}
    Let
        \begin{align}
        \label{f-n-lambda}
            f(\lambda) = \frac{1}{(e^{-2 \pi i \lambda} -1) \, z^\lambda}, \;\;\;\;\;\; \lambda \in \mathbb{R}_{> 0} \setminus \mathbb{Z}, \;\; z \in \mathbb{C} \setminus \{0\},\;\; 0< \mathrm{Arg} \, z \leq 2\pi.
        \end{align}
    Then, its derivative satisfies the relation  
        \begin{align}
        \label{f-n-lambda-01}
            f^{(n)}(\lambda) = (-1)^n  \sum^{n}_{j = 0} \binom{n}{j} (2 \pi i)^{n-j} \ln^j z \, \sum^{n-j}_{l =0} \; l ! \;\mathbf{S}^{n-j}_l \; \frac{1}{(e^{2\pi i \lambda } - 1)^l} \, f(\lambda).
        \end{align}
\end{theorem}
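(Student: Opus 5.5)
The plan is to verify \eqref{f-n-lambda-01} directly by differentiation, without going through the contour integrals. Theorem~\ref{theorem_3.1} then serves only as a consistency check. Write $f(\lambda)=g(\lambda)\,z^{-\lambda}$ with
\begin{equation*}
g(\lambda)=\frac{1}{e^{-2\pi i\lambda}-1}, \qquad z^{-\lambda}=e^{-\lambda\ln z}.
\end{equation*}
Here $\ln z$ is the logarithm whose branch matches the argument range fixed in \eqref{f-n-lambda}, so that $z^{-\lambda}$ is an entire function of $\lambda$ for fixed $z$. Then
\begin{equation*}
\frac{\mathrm{d}^j}{\mathrm{d}\lambda^j}z^{-\lambda}=(-1)^j\ln^j z\,z^{-\lambda}.
\end{equation*}
By the Leibniz rule,
\begin{equation*}
f^{(n)}(\lambda)=\sum_{j=0}^n\binom{n}{j}(-1)^j\ln^j z\,z^{-\lambda}\,g^{(n-j)}(\lambda).
\end{equation*}
Everything therefore reduces to a closed form for $g^{(m)}$.

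For $g^{(m)}$, substitute $w=e^{-2\pi i\lambda}$. Then $\frac{\mathrm{d}}{\mathrm{d}\lambda}=-2\pi i\,\theta$ with $\theta=w\frac{\mathrm{d}}{\mathrm{d}w}$, and $g=(w-1)^{-1}$. The key identity is the classical normal-ordering formula
\begin{equation*}
\theta^m=\sum_{l=0}^m\mathbf{S}^m_l\,w^l\frac{\mathrm{d}^l}{\mathrm{d}w^l}.
\end{equation*}
I would prove it by induction on $m$, using $\theta\,(w^l D^l)=l\,w^lD^l+w^{l+1}D^{l+1}$ with $D=\frac{\mathrm{d}}{\mathrm{d}w}$, together with the Stirling recurrence $\mathbf{S}^{m+1}_l=l\,\mathbf{S}^m_l+\mathbf{S}^m_{l-1}$. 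Since $D^l(w-1)^{-1}=(-1)^l l!\,(w-1)^{-l-1}$, this gives
\begin{equation*}
g^{(m)}(\lambda)=(-2\pi i)^m\sum_{l=0}^m(-1)^l\,l!\,\mathbf{S}^m_l\,\frac{e^{-2\pi i\lambda l}}{(e^{-2\pi i\lambda}-1)^{l+1}}.
\end{equation*}
Up to the factor $(-2\pi i)^m$, this is exactly the $\beta_j$ of Theorem~\ref{theorem_3.1} with $n-j=m$.

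To finish, I will use the elementary simplification
\begin{equation*}
\frac{-e^{-2\pi i\lambda}}{e^{-2\pi i\lambda}-1}=\frac{1}{e^{2\pi i\lambda}-1}.
\end{equation*}
It yields
\begin{equation*}
g^{(m)}(\lambda)=(-2\pi i)^m\,g(\lambda)\sum_{l=0}^m\frac{l!\,\mathbf{S}^m_l}{(e^{2\pi i\lambda}-1)^l}.
\end{equation*}
Substituting into the Leibniz sum, the signs $(-1)^j(-1)^{n-j}$ combine into $(-1)^n$ and $g\,z^{-\lambda}$ recombines into $f(\lambda)$. This produces \eqref{f-n-lambda-01}.

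The only step needing real care is the operator identity for $\theta^m$ and the bookkeeping of its signs. I regard that as the main (though modest) obstacle. The restriction $\lambda\notin\mathbb{Z}$ guarantees $e^{\pm2\pi i\lambda}\neq1$, so every expression is analytic in $\lambda$ and the differentiations are legitimate.

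As a cross-check, one could instead differentiate \eqref{eq3.1} $n$ times in $\lambda$ under the integral sign using \eqref{eq3.3}, and compare with the integrand of \eqref{theorem3.1}. Because $k$ is arbitrary, this identifies the integrand kernels. That route needs an argument that equality of the integrals for all admissible $k$ forces pointwise equality of the kernels, so the direct computation above is cleaner and is the one I would write out.
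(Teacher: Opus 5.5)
Your direct computation is correct, and it takes a genuinely different route from the paper. The ``cross-check'' you describe at the end is in fact exactly the paper's argument. The paper never differentiates $f$ itself. It applies $(-1)^n\frac{\mathrm{d}^n}{\mathrm{d}\lambda^n}$ to \eqref{eq3.1} via \eqref{eq3.3}, equates the result with the representation \eqref{theorem3.1} of Theorem~\ref{theorem_3.1}, and reads off the integrand kernels. It then uses the same simplification $(-1)^l e^{-2\pi i\lambda l}(e^{-2\pi i\lambda}-1)^{-l}=(e^{2\pi i\lambda}-1)^{-l}$ that you use. In the paper the Stirling numbers come from solving the recurrence \eqref{eq3.18} for $G^n_l$ produced by the contour deformation. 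In your proof they come from the normal-ordering identity $\theta^m=\sum_l\mathbf{S}^m_l w^lD^l$, and your induction for it is correct.

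Your route has several advantages:
\begin{itemize}
\item It is self-contained and elementary.
\item It holds for every complex non-integer $\lambda$ and for any branch of $\ln z$, provided the same branch is used in $z^{-\lambda}=e^{-\lambda\ln z}$.
\item It avoids two soft spots in the paper's derivation: the unstated step that equality of $\int_C k(z)(\cdots)\,\mathrm{d}z$ for all admissible $k$ forces equality of kernels, and the ansatz \eqref{eq3.11}, which is read off from small cases in the proof of Theorem~\ref{theorem_3.1}.
\item Run in reverse, it proves more: your formula, combined with differentiation under the integral sign in \eqref{eq3.1} over the compact contour $C$, gives a short independent proof of Theorem~\ref{theorem_3.1} itself.
\end{itemize}
The paper's route is mainly conceptual. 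It presents the identity as the pointwise counterpart of the finite-part and contour machinery, which is the form in which it is later used for the residue terms in the proof of Theorem~\ref{theorem4.1}.
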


\subsection{Contour integral representation for integer \texorpdfstring{\(\lambda\)}{lambda}} \label{cont-intgr-lamb}

\begin{lemma} 
    \label{theorem_S2-zero}
    For all \(n \in \mathbb{N}\) and \(q \in \mathbb{N}_0\),
    \begin{align}
        \label{stirling-zero-01}
        \sum_{l=1}^{n} (-1)^{l-1} (q+l)! \sum_{k=0}^{l-1} \frac{B^{(q+l+1)}_k}{k!} \frac{(q+l)^{l-k-1}}{(l-k-1)!} \mathbf{S}^{q+n}_{q+l} = (q+1)! \, \delta_{n,1},
    \end{align}
    where \(\mathbf{S}^{q+n}_{q+l}\) denotes the Stirling number of the second kind and \(B^{(q+l+1)}_k\) denotes the Bernoulli number of order \((q+l+1)\).
\end{lemma}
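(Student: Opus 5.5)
The plan is to recognise the inner sum over $k$ as a Cauchy product of two exponential generating functions, evaluate it in closed form as a Stirling number of the first kind, and then finish with the orthogonality of the two kinds of Stirling numbers. Throughout I use the convention that the Bernoulli numbers of order $N$ are defined by $\bigl(t/(e^t-1)\bigr)^N=\sum_{k\ge 0}B^{(N)}_k t^k/k!$.

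Fix $l$ and put $N=q+l+1$ and $m=q+l$. The inner sum $\sum_{k=0}^{l-1}\frac{B^{(N)}_k}{k!}\frac{m^{l-1-k}}{(l-1-k)!}$ is exactly the coefficient of $t^{l-1}$ in
\begin{equation*}
\Bigl(\frac{t}{e^t-1}\Bigr)^{q+l+1}e^{(q+l)t}=\frac{t^{q+l+1}e^{-t}}{(1-e^{-t})^{q+l+1}}.
\end{equation*}
I would write this coefficient as a small contour integral around $t=0$:
\begin{equation*}
\frac{1}{2\pi i}\oint t^{-l}\,\frac{t^{q+l+1}e^{-t}}{(1-e^{-t})^{q+l+1}}\,\mathrm{d}t=\frac{1}{2\pi i}\oint \frac{t^{q+1}e^{-t}}{(1-e^{-t})^{q+l+1}}\,\mathrm{d}t.
\end{equation*}
The substitution $u=1-e^{-t}$ gives $\mathrm{d}u=e^{-t}\mathrm{d}t$ and $t=-\mathrm{Log}(1-u)$. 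This map is locally biholomorphic near $0$, so a small circle maps to a small loop around $u=0$. The integral therefore becomes
\begin{equation*}
[u^{q+l}]\bigl(-\mathrm{Log}(1-u)\bigr)^{q+1}.
\end{equation*}
By the classical expansion $\bigl(-\mathrm{Log}(1-u)\bigr)^{K}/K!=\sum_{M}c(M,K)\,u^M/M!$, where $c$ denotes the unsigned Stirling numbers of the first kind, multiplying by $(q+l)!$ turns the $l$-th summand into $(-1)^{l-1}(q+1)!\,c(q+l,q+1)\,\mathbf{S}^{q+n}_{q+l}$.

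Next I reindex with $M=q+l$ and write the signed Stirling numbers of the first kind as $s(M,q+1)=(-1)^{M-q-1}c(M,q+1)$. The left-hand side of \eqref{stirling-zero-01} then becomes
\begin{equation*}
(q+1)!\sum_{M=q+1}^{q+n}\mathbf{S}^{q+n}_{M}\,s(M,q+1).
\end{equation*}
The range of summation is harmless. For $M>q+n$ we have $\mathbf{S}^{q+n}_M=0$, and for $M<q+1$ we have $s(M,q+1)=0$, so the sum runs over all $M$. The orthogonality relation $\sum_M \mathbf{S}^{N}_M s(M,K)=\delta_{N,K}$, applied with $N=q+n$ and $K=q+1$, gives $(q+1)!\,\delta_{n,1}$.

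The main obstacle is the first step: correctly identifying the inner $k$-sum as a coefficient of the product generating function, and carrying out the change of variables cleanly. In particular one must check that the factor $e^{-t}$ is exactly absorbed into $\mathrm{d}u$ and that the orientation of the loop is preserved. A formal-power-series version of the same argument avoids contour integrals altogether. It uses Lagrange inversion, or equivalently the composition of the series $u=1-e^{-t}$ with $t=-\mathrm{Log}(1-u)$. I would fall back on it if the analytic argument feels too loose.
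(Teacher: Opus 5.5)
Your proof is correct. It reaches the same intermediate result as the paper, namely that the $l$-th summand equals $(q+1)!\,\mathbf{s}^{q+l}_{q+1}\,\mathbf{S}^{q+n}_{q+l}$, and it then finishes with the same orthogonality relation \eqref{rel-004}. The two arguments differ only in how the inner $k$-sum is evaluated.

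The paper stays inside the theory of higher-order Bernoulli polynomials, in four steps:
\begin{itemize}
\item It reads the inner sum as $B^{(q+l+1)}_{l-1}(q+l)/(l-1)!$ via the addition formula \eqref{rel-001}.
\item It lowers the order from $q+l+1$ to $q+l$ with the recurrence \eqref{rel-002}; the second term there vanishes because the argument equals the order $q+l$.
\item It removes the argument with the reflection formula \eqref{rel-003}, which produces a factor $(-1)^{l-1}$ that cancels the outer sign.
\item It identifies $\binom{q+l-1}{l-1}B^{(q+l)}_{l-1}$ with $\mathbf{s}^{q+l}_{q+1}$ through the cited identity \eqref{StirS1}.
\end{itemize}

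You replace this chain of four literature identities with a single residue computation taken directly from the defining generating function. The substitution $u=1-e^{-t}$ is Lagrange inversion in disguise: it turns the coefficient into $[u^{q+l}]\bigl(-\mathrm{Log}(1-u)\bigr)^{q+1}$. The sign $(-1)^{l-1}$ then shows up simply as the conversion from unsigned to signed Stirling numbers, rather than through a reflection formula.

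Your version is self-contained, and the same computation effectively reproves \eqref{StirS1} along the way. Its cost is the change-of-variables step, which, as you note, can be made fully formal through invariance of the formal residue under composition. The paper's version is shorter, but only because it leans on the identities it cites.
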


\begin{proof}
The higher-order Bernoulli numbers are generally defined by their exponential generating function \cite{carlitz1960}:
\begin{align}
    \label{bernumbers-high-ord}
    \left( \frac{z}{e^z - 1} \right)^m = \sum_{n=0}^\infty B^{(m)}_n \, \frac{z^n}{n!}.
\end{align}
They also relate to the Stirling numbers of the first kind \( \mathbf{s}^{n}_{n-k} \) and the second kind \( \mathbf{S}^{k+n}_n \) through the identities:
\begin{align}
    \label{StirS1}
    \mathbf{s}^{n}_{n-k} &= \binom{n-1}{k} B^n_k, \\
    \label{StirS2}
    \mathbf{S}^{k+n}_n &= \binom{k+n}{k} B^{(-n)}_k.
\end{align}

To prove the expression given in equation \eqref{stirling-zero-01}, we employ the following properties of higher-order Bernoulli polynomials and Stirling numbers of the first and second kind \cite{jeong2005explicit, Howard01081994, Young01122017}, \cite[26.8.39]{NIST}:
\begin{gather}
    \label{rel-001}
    B^{(m)}_n (x)= \sum_{k=0}^n \binom{n}{k} B^{(m)}_k x^{n-k}, \\
    \label{rel-002}
    B^{(k+1)}_n (x) = \left( 1- \frac{n}{k} \right) B^{(k)}_n (x) + (x-k) \frac{n}{k} B^{(k)}_{n-1} (x), \\
    \label{rel-003}
    B^{(k)}_n (k-a) = (-1)^n B^{(k)}_n (a), \\
    \label{rel-004}
    \sum_{l=j}^{k} \mathbf{s}^l_j \mathbf{S}^k_l = \delta_{j,k}.
\end{gather}

Consider the summand of the outer sum in equation \eqref{stirling-zero-01}, excluding the Stirling number \(\mathbf{S}^{q+n}_{q+l}\). Using equation \eqref{rel-001}, we rewrite it as
\begin{align}
    \label{s1001}
    (-1)^{l-1} (q+1)! \binom{q+l}{l-1} B^{(q+l+1)}_{l-1} (q+l).
\end{align}
Applying relation \eqref{rel-002} and the reflection formula \eqref{rel-003} to equation \eqref{s1001} yields
\begin{align}
    \label{s1002}
    (-1)^{l-1} (q+1)! \binom{q+l-1}{l-1} B^{(q+l)}_{l-1} (q+l)
    &= (q+1)! \binom{q+l-1}{l-1} B^{(q+l)}_{l-1} \\
    &= (q+1)! \, \mathbf{s}^{q+l}_{q+1}, \nonumber
\end{align}
where we used equation \eqref{StirS1} in the last step. Substituting this expression back into equation \eqref{stirling-zero-01}, adjusting the summation index, and applying equation \eqref{rel-004}, we obtain
\begin{align}
    \label{stirling-zero-02}
    (q+l)! \sum_{l=q+1}^{q+n} \mathbf{s}^{l}_{q+1} \mathbf{S}^{q+n}_l = (q+1)! \, \delta_{n,1}.
\end{align}
This completes the proof.
\end{proof}

\begin{lemma} \label{th-ber-seckind}
    For all \(l \in \mathbb{N}\),
    \begin{align}
    \label{ber-seckind}
        b_{l+1} = (-1)^{l+1} \sum_{k=0}^{l+1} \frac{B^{(l+1)}_k}{k!} \frac{l^{\,l+1-k}}{(l+1-k)!},  
    \end{align}
    where \(b_{l+1}\) denotes the Bernoulli number of the second kind, \(b_0 =1, \; b_1 = 1/2\), and \(B^{(l+1)}_k\) denotes the Bernoulli number of order \((l+1)\).
\end{lemma}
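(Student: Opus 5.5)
The plan is to recognize the right-hand side of \eqref{ber-seckind} as a normalized value of a higher-order Bernoulli polynomial. Then I will identify that value with $b_{l+1}$ by a residue (Lagrange-type) coefficient extraction. Throughout I write $m=l+1$ and use the convention that the Bernoulli numbers of the second kind are given by $u/\ln(1+u)=\sum_{n\ge 0} b_n u^n$, which is consistent with $b_0=1$ and $b_1=1/2$.

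\emph{Step 1 (collapse the sum).} By \eqref{rel-001} with $n=m$ and $x=l$,
\[
\sum_{k=0}^{m}\frac{B^{(m)}_k}{k!}\frac{l^{\,m-k}}{(m-k)!}=\frac{1}{m!}\sum_{k=0}^{m}\binom{m}{k}B^{(m)}_k\, l^{\,m-k}=\frac{B^{(m)}_m(l)}{m!}.
\]
The reflection formula \eqref{rel-003} with $k=m$ and $a=1$ gives $B^{(m)}_m(m-1)=(-1)^m B^{(m)}_m(1)$. The sign $(-1)^{l+1}$ in \eqref{ber-seckind} cancels this $(-1)^m$, so the claim reduces to the Nörlund-type identity
\[
b_m=\frac{B^{(m)}_m(1)}{m!}.
\]

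\emph{Step 2 (coefficient extraction).} The generating function \eqref{bernumbers-high-ord}, extended to polynomials, is $\left(t/(e^t-1)\right)^m e^{xt}=\sum_n B^{(m)}_n(x)\,t^n/n!$. Hence
\[
\frac{B^{(m)}_m(1)}{m!}=[t^m]\Bigl(\frac{t}{e^t-1}\Bigr)^m e^{t}
=\frac{1}{2\pi i}\oint \frac{e^{t}}{t\,(e^t-1)^m}\,\mathrm{d}t,
\]
taken on a small circle about $t=0$. Now substitute $u=e^t-1$. This change of variables is locally biholomorphic near $0$ and preserves the winding number, and it gives $\mathrm{d}u=e^t\,\mathrm{d}t$ and $t=\ln(1+u)$. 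The integral becomes
\[
\frac{1}{2\pi i}\oint \frac{\mathrm{d}u}{u^m\ln(1+u)}=[u^{m-1}]\frac{1}{\ln(1+u)}=[u^m]\frac{u}{\ln(1+u)}=b_m,
\]
which completes the argument.

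\emph{Main obstacle.} The only delicate point is Step 2: one must use the factor $e^{t}$ exactly once, not $e^{mt}$, so that the substitution $u=e^t-1$ absorbs it cleanly. One must also fix the sign convention for $b_n$ so that it matches $b_1=1/2$. As a sanity check I would verify the case $m=2$ directly: $B^{(2)}_2(x)=x^2-2x+5/6$, so $B^{(2)}_2(1)/2!=-1/12=b_2$.
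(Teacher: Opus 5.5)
Your proof is correct. It coincides with the paper's proof only through the reduction step and then takes a different route.

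Both arguments collapse the sum with \eqref{rel-001} to $(-1)^{l+1}B^{(l+1)}_{l+1}(l)/(l+1)!$ and use reflection to move the argument to $1$. The paper's intermediate expression $\frac{1}{(l+1)!}\bigl(B^{(l+1)}_{l+1}+(l+1)B^{(l)}_{l}\bigr)$ is just $B^{(l+1)}_{l+1}(1)/(l+1)!$, rewritten via the difference equation \eqref{ber-rel-000} at $z=0$.

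After that, the paper never proves $b_m=B^{(m)}_m(1)/m!$ directly. It applies the order recurrence \eqref{ber-rel-0011} to turn the expression into $-B^{(l)}_{l+1}/\bigl((l+1)!\,l\bigr)$, and then quotes the tabulated formula \eqref{ber-rel-002} for $b_{l+1}$.

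Your substitution $u=e^t-1$ in the coefficient integral replaces that chain of cited identities with a short derivation from the two generating functions alone. It buys three things: it is self-contained, it never divides by $l$ (so it also covers $l=0$), and it shows why the identity holds.

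The two routes are closely related. Integrate your $t$-integral by parts, using $e^t(e^t-1)^{-m}\,\mathrm{d}t=\mathrm{d}\bigl[(e^t-1)^{1-m}/(1-m)\bigr]$. This gives $\frac{1}{1-m}[t^m]\bigl(t/(e^t-1)\bigr)^{m-1}=B^{(m-1)}_m/\bigl((1-m)\,m!\bigr)$, which is exactly the paper's end point. So the paper's recurrence manipulations are the algebraic counterpart of that integration by parts, with the analytic content outsourced to \eqref{ber-rel-002}.
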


\begin{proof}
The Bernoulli numbers of the second kind \(b_k\) are defined by the generating function \cite[24.16.5]{NIST}
\begin{align}
    \frac{t}{\ln(1+t)} = \sum_{k=0}^\infty b_k \, t^k.
\end{align}
We use the following properties \cite{Howard01081994}, \cite[24.16.6]{NIST}:
\begingroup
\allowdisplaybreaks
\begin{gather}
    \label{ber-rel-000}
    B^{(k)}_n (z+1) - B^{(k)}_n (z) = n B^{(k-1)}_{n-1} (z), \\
    \label{ber-rel-001}
    B^{(l)}_{l+k} \left( z + \frac{l}{2} \right) = (-1)^{l+k} B^{(l)}_{l+k} \left( -z + \frac{l}{2} \right), \\
    \label{ber-rel-0011}
    B^{(n-k)}_n = \frac{k-n}{k} B^{(n-k+1)}_n + \frac{(k-n) n}{k} B^{(n-k)}_{n-1}, \\
    \label{ber-rel-002}
    b_{l+1} = - \frac{B^{(l)}_{l+1}}{(l+1)! \, l}, \;\;\; b_0 = 1, \;\;\; b_1 = \frac{1}{2},
\end{gather}
\endgroup
Using equations \eqref{rel-001}, and \eqref{ber-rel-000} to \eqref{ber-rel-002}, rewrite the right-hand side of equation \eqref{ber-seckind} as
\begin{align}
    \label{ber-seckind-001}
    \mathrm{RHS} 
    &= \frac{(-1)^{l+1}}{(l+1)!} \sum_{k=0}^{l+1} \binom{l+1}{k} B^{(l+1)}_k l^{\,l+1-k} 
    = \frac{(-1)^{l+1}}{(l+1)!} B^{(l+1)}_{l+1} (l) \\
    &= \frac{1}{(l+1)!} \left( B^{(l+1)}_{l+1} + (l+1) B^{(l)}_l \right) 
    = - \frac{B^{(l)}_{l+1}}{(l+1)! \, l} = b_{l+1}. \nonumber
\end{align}
This proves the lemma.
\end{proof}

\begin{lemma} 
    \label{bernoulli-numbers}
    For all \(m \in \mathbb{N}\),
    \begin{align}
        \label{bernoulli-numbers-01}
        \frac{B_{m+1}}{m+1} = \sum_{l=1}^m (-1)^l l! \, \mathbf{S}^m_l \sum_{k=0}^{l+1} \frac{B^{(l+1)}_k}{k!} \frac{l^{l+1-k}}{(l+1-k)!},
    \end{align}
    where \(B_{m+1}\) is the Bernoulli number, \(B^{(l+1)}_k\) is the Bernoulli number of order \((l+1)\), and \(\mathbf{S}^{m}_{l}\) denotes the Stirling number of the second kind.
\end{lemma}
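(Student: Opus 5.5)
The plan is to reduce the inner sum to Bernoulli numbers of the second kind using Lemma~\ref{th-ber-seckind}, and then to prove the resulting identity with exponential generating functions.

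\emph{Step 1 (reduce the inner sum).} By Lemma~\ref{th-ber-seckind}, for each $l\ge 1$,
\begin{align*}
\sum_{k=0}^{l+1} \frac{B^{(l+1)}_k}{k!}\frac{l^{\,l+1-k}}{(l+1-k)!} = (-1)^{l+1} b_{l+1}.
\end{align*}
Substituting this into \eqref{bernoulli-numbers-01}, the signs combine as $(-1)^l(-1)^{l+1}=-1$. The claim therefore becomes
\begin{align*}
\sum_{l=1}^m l!\,\mathbf{S}^m_l\, b_{l+1} = -\frac{B_{m+1}}{m+1}, \qquad m\in\mathbb{N}.
\end{align*}
Since $\mathbf{S}^m_0=0$ for $m\ge1$, the sum may equally run over $l\ge 0$ (and over all $l\ge0$, because $\mathbf{S}^m_l=0$ for $l>m$). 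This makes it a clean coefficient identity.

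\emph{Step 2 (generating functions).} Multiply by $x^m/m!$ and sum over $m\ge1$. Use the standard exponential generating function $\sum_{m} \mathbf{S}^m_l\, x^m/m! = (e^x-1)^l/l!$, which follows from \eqref{S2-gen-pow-series}. After interchanging the sums (legitimate for small $|x|$, since $e^x-1=O(x)$ and $\sum b_k t^k$ converges for $|t|<1$), the left-hand side becomes $\sum_{l\ge1} b_{l+1}\,t^l$ with $t=e^x-1$. From $t/\ln(1+t)=\sum_k b_k t^k$ together with $b_0=1$ and $b_1=1/2$, this equals
\begin{align*}
\frac{1}{t}\left(\frac{t}{\ln(1+t)} - 1 - \frac{t}{2}\right) = \frac{1}{x} - \frac{1}{e^x-1} - \frac12,
\end{align*}
where we used $\ln(1+t)=x$.

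\emph{Step 3 (compare coefficients).} From $x/(e^x-1)=\sum_n B_n x^n/n!$ with $B_0=1$ and $B_1=-1/2$, we get
\begin{align*}
\frac{1}{e^x-1} = \frac1x - \frac12 + \sum_{n\ge2} \frac{B_n x^{n-1}}{n!}.
\end{align*}
Hence the expression from Step 2 equals
\begin{align*}
-\sum_{n\ge 2}\frac{B_n x^{n-1}}{n!} = -\sum_{m\ge1}\frac{B_{m+1}}{m+1}\frac{x^m}{m!}.
\end{align*}
Comparing coefficients of $x^m/m!$ gives the reduced identity from Step 1, and hence \eqref{bernoulli-numbers-01}.

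The only step needing care is the sign and index bookkeeping in Step 2. In particular, the normalization $b_1=1/2$ and the convention $B_1=-1/2$ in \eqref{bernumbers-high-ord} must be matched consistently, so that the $-1/2$ constant terms cancel. Apart from that, the argument is routine once Lemma~\ref{th-ber-seckind} has been applied.
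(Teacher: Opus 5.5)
Your proof is correct and follows the paper's route: the paper also uses Lemma~\ref{th-ber-seckind} to reduce the right-hand side to $-\sum_{l=1}^m l!\,\mathbf{S}^m_l\, b_{l+1}$, and then simply cites Farhi for the identity $\sum_{l=1}^m l!\,\mathbf{S}^m_l\, b_{l+1} = -B_{m+1}/(m+1)$. Your generating-function computation, which substitutes $t=e^x-1$ into $t/\ln(1+t)$ and compares with $x/(e^x-1)$, gives a short self-contained proof of that cited step, and your constant-term bookkeeping with $b_1=1/2$ and $B_1=-1/2$ is right.
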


\begin{proof}
Using Lemma~\ref{th-ber-seckind}, the right-hand side of equation \eqref{bernoulli-numbers-01} simplifies to
\begin{align}
    \label{bernoulli-numbers-02}
    \mathrm{RHS} = - \sum_{l=1}^m l! \, \mathbf{S}^m_l b_{l+1}.
\end{align}
Equation \eqref{bernoulli-numbers-02} evaluates to \(B_{m+1} / m+1\), as given in \cite{farhi2025formulas}.
\end{proof}

\begin{theorem} 
    \label{theorem_3.2}
    Let $k(z)$ be analytic in the interval $[0,a]$. Then the contour integral representation of $\bbint{0}{a} t^{-\mathfrak{b}} k(t) \ln{}^n{t}\,\mathrm{d}t$ for all positive integer \(\mathfrak{b}\)  is given by
    \begin{flalign}
        \label{theorem3.2}
        \bbint{0}{a} \frac{k(t) \ln{}^n{t}}{t^{\mathfrak{b}}} \mathrm{d}t = & \frac{1}{2\pi i \, (n+1)} \int_C \frac{k(z)}{z^{\mathfrak{b}}} \ln{}^{n+1}{z} \, \mathrm{d}z \\
        & \hspace{0.5cm} + \sum_{j=0}^n \binom{n}{j} (2\pi i)^{n-j} \; \frac{B_{n-j+1}}{n-j+1}   \int_C \frac{k(z)}{z^{\mathfrak{b}}} \ln{}^{j}{z} \, \mathrm{d}z, \nonumber
    \end{flalign}
    where \(B_{n-j+1}\) is the Bernoulli number and n is any positive integer.
\end{theorem}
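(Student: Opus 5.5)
The plan is to obtain the integer case as the regularized limit of the non-integer representation. By the third line of \eqref{eq3.2}, the left-hand side equals $\lim^\times_{\lambda\to\mathfrak{b}}$ of the analytic continuation, and for non-integer $\lambda$ near $\mathfrak{b}$ that continuation is given by Theorem~\ref{theorem_3.1}. We therefore write the right-hand side of \eqref{theorem3.1} as $\sum_{j=0}^n\binom{n}{j}(2\pi i)^{n-j}\beta_j(\lambda)I_j(\lambda)$, where $I_j(\lambda)=\int_C k(z)z^{-\lambda}\ln^j z\,\mathrm{d}z$ is entire in $\lambda$, since $C$ stays away from the origin. The $\beta_j$ carry poles at $\lambda=\mathfrak{b}$ of order up to $n-j+1$. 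We then apply the regularized limit term by term, which is justified by linearity \eqref{lin-reg-lim} for a finite sum.

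\textbf{Step 1: expand each $\beta_j$ about $\mathfrak{b}$.} Set $u=-2\pi i(\lambda-\mathfrak{b})$, so that $e^{-2\pi i\lambda}=e^{u}$. Then
\[
\frac{e^{ul}}{(e^u-1)^{l+1}}=u^{-l-1}e^{ul}\Bigl(\frac{u}{e^u-1}\Bigr)^{l+1}=u^{-l-1}\sum_{k}\frac{B^{(l+1)}_k}{k!}u^k\sum_p\frac{l^p u^p}{p!}.
\]
This is where the higher-order Bernoulli numbers of \eqref{bernumbers-high-ord} enter. Hence
\[
\beta_j(\lambda)=\sum_{l=0}^{n-j}(-1)^l l!\,\mathbf{S}^{n-j}_l\sum_{m\ge 0}c_{l,m}\,u^{m-l-1},\qquad c_{l,m}=\sum_{k=0}^{m}\frac{B^{(l+1)}_k}{k!}\frac{l^{m-k}}{(m-k)!}.
\]

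\textbf{Step 2: extract the constant term.} Expand $I_j(\lambda)=\sum_r I_j^{(r)}(\mathfrak{b})(\lambda-\mathfrak{b})^r/r!$ and use $I_j^{(r)}(\mathfrak{b})=(-1)^r\int_C k(z)z^{-\mathfrak{b}}\ln^{j+r}z\,\mathrm{d}z$. Equivalently, this is Lemma~\ref{lemma2.1} applied to each pole. The regularized limit becomes a double sum over $j$ and $r$ of contour integrals $\int_C k z^{-\mathfrak{b}}\ln^{j+r}z\,\mathrm{d}z$. Setting $J=j+r$ and collecting by $J$, the coefficient of $\int_C k z^{-\mathfrak{b}}\ln^J z\,\mathrm{d}z$ for $0\le J\le n+1$ is a sum over $j\le J$ and $l$ involving $\binom{n}{j}(2\pi i)^{n-j}(-2\pi i)^{-(J-j)}$, $\mathbf{S}^{n-j}_l$, and $c_{l,l-(J-j)+1}$ (the $u^{0}$ coefficient after combining with $(\lambda-\mathfrak{b})^{r}$). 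The powers of $2\pi i$ collapse to $(2\pi i)^{n-J}$ up to signs.

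\textbf{Step 3: identify the coefficients with the three lemmas.}
\begin{itemize}
\item For $J=n+1$, only $j=n$ contributes, with $l=0$ and $r=n+1$. This gives $c_{0,0}=1$ times $\frac{1}{(n+1)!}\cdot$(the $(n+1)$-th derivative factor), which yields $\frac{1}{2\pi i(n+1)}$ after absorbing the Taylor factorials and the $\ln^{j+r}$ bookkeeping. It is easiest to track this by writing $\beta_n=1/(e^u-1)$ and computing directly.
\item For $J\le n$ with $r\ge 1$, after reindexing $n-j=q+\ell$ the coefficients should reduce to the sums in Lemma~\ref{theorem_S2-zero} and so vanish. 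The exception is the boundary term, which combines with the $r=0$ terms.
\item The $r=0$ terms contribute $\sum_l(-1)^l l!\,\mathbf{S}^{n-J}_l c_{l,l+1}$. By Lemma~\ref{bernoulli-numbers} with $m=n-J$, this equals $B_{n-J+1}/(n-J+1)$, with the $l=0$ term handled via $c_{0,1}=B_1$.
\end{itemize}
Combining these reproduces \eqref{theorem3.2}.

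The main obstacle is Step 3: the index gymnastics that match the mixed coefficients (those with $r\ge1$ and $J\le n$) exactly to the form of Lemma~\ref{theorem_S2-zero}, including the $\delta_{n,1}$ boundary contribution that must combine with the $\ln^{n+1}$ term or the Bernoulli term. I would first verify $n=1,2$ explicitly against \eqref{eq3.6} and \eqref{eq3.7}, taking their regularized limits at $\lambda=\mathfrak{b}$, to fix the signs before doing the general reindexing.
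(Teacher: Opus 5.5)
Your overall route is the paper's: take $\lim^\times_{\lambda\to\mathfrak b}$ of \eqref{theorem3.1} term by term, expand the $\beta_j$ in higher-order Bernoulli numbers, collect by powers of $\ln z$, and finish with Lemmas~\ref{theorem_S2-zero} and~\ref{bernoulli-numbers}. Your generating-function derivation of $c_{l,m}$ is actually a shorter path to the paper's \eqref{lambda-m-01} than the partition formula \eqref{stirling-pi-00}.

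The problem is Step~3. It puts the surviving contributions in the wrong places, and as written it gives the wrong coefficient of $\ln^{n+1}z$. Write $L_J=\int_C k(z)z^{-\mathfrak b}\ln^J z\,\mathrm{d}z$.
\begin{itemize}
\item Since $e^{ul}(e^u-1)^{-l-1}$ has a pole of order $l+1$, you need $r\le l+1\le n-j+1$. So ``$l=0$, $r=n+1$'' cannot occur.
\item Instead, $J=j+r=n+1$ forces $l=n-j$ and $r=n-j+1$. This happens for \emph{every} $j=0,\dots,n$, each time with $c_{n-j,0}=1$.
\item The $j=n$ term alone (your ``compute directly with $\beta_n=1/(e^u-1)$'') gives $\frac{1}{2\pi i}L_{n+1}$, not $\frac{1}{2\pi i(n+1)}L_{n+1}$.
\item Also, $I_j^{(r)}(\mathfrak b)(\lambda-\mathfrak b)^r/r!=u^rL_{j+r}/\bigl((2\pi i)^r r!\bigr)$. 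The two signs cancel, so the factor is $(2\pi i)^{-(J-j)}$, not $(-2\pi i)^{-(J-j)}$. That sign is exactly what makes the total coefficient of $L_{n+1}$ come out right:
\[
\frac{1}{2\pi i}\sum_{m=0}^{n}\binom{n}{m}\frac{(-1)^m}{m+1}=\frac{1}{2\pi i\,(n+1)}.
\]
This is the paper's first term $\frac{1}{2\pi i}$ plus the coefficient $-\frac{n}{(n+1)\,2\pi i}$ in $\mathrm{LT}_{40}$.
\end{itemize}

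You have also misplaced the $\delta_{n,1}$ of Lemma~\ref{theorem_S2-zero}. It is precisely these top-order terms.
\begin{itemize}
\item Take the lemma's $q$ to be $r-2$ and its $n$ to be $n-J+2$. Then for each fixed $r\ge 2$ it evaluates the $l$-sum multiplying $L_J$, and $\delta_{n,1}$ fires only when $J=n+1$.
\item So for $J\le n$ nothing combines with the $r=0$ sum. Lemma~\ref{bernoulli-numbers} already gives the full coefficient $\binom{n}{J}(2\pi i)^{n-J}B_{n-J+1}/(n-J+1)$, and any extra piece would break the formula.
\item The lemma, stated for $q\in\mathbb{N}_0$, does not cover $r=1$. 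There you need $c_{l,l}=[u^{-1}]\,e^{ul}(e^u-1)^{-l-1}=\delta_{l,0}$ (substitute $w=e^u-1$), together with $\mathbf{S}^{n-j}_0=0$ for $j<n$. This is the content of the paper's $\mathrm{LT}_1$ and $\mathrm{LT}_{41}$.
\end{itemize}

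Incidentally, you can bypass all of this bookkeeping. Since \eqref{theorem3.1} is just Leibniz's rule applied to \eqref{eq3.1}, $\beta_j=\partial_u^{n-j}(e^u-1)^{-1}$. The finite part is then the $u^0$-coefficient of $(2\pi i)^n\partial_u^n\int_C k(z)z^{-\mathfrak b}e^{u\ell}(e^u-1)^{-1}\,\mathrm{d}z$, with $\ell=\ln z/(2\pi i)$. By the generating function of the Bernoulli polynomials this equals $\frac{(2\pi i)^n}{n+1}\int_C k(z)z^{-\mathfrak b}B_{n+1}(\ell)\,\mathrm{d}z$. Expanding $B_{n+1}(\ell)$ gives \eqref{theorem3.2} directly.
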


\begin{proof}
We can prove this theorem by applying the identity in equation \eqref{eq3.2} for a natural number \(\lambda\). The finite-part integral equals the regularized limit of the analytic continuation of the Mellin transform. In solving the regularized limit of equation \eqref{theorem3.1}, denoted by
\begin{align}
    \label{lambda-m}
    \lim^\times_{\lambda \to \mathfrak{b}} \left[ \, \bbint{0}{a} \frac{k(t) \ln^n t}{t^{\lambda}}\,\mathrm{d}t \, \right] &= \lim^\times_{\lambda \to \mathfrak{b}} \left[ \sum_{j = 0}^{n} \binom{n}{j} (2 \pi i)^{n-j} \sum_{l = 0}^{n-j} (-1)^l \, l! \,\mathbf{S}^{n-j}_l \right. \\
    &\hspace{2.5cm} \left. \times \frac{e^{-2\pi i \lambda l}}{(e^{-2\pi i \lambda} - 1)^{l + 1}} \int_C \frac{k(z)}{z^\lambda} \ln^j z \, \mathrm{d}z \right],\nonumber
\end{align}
we focus on the regularized limit of the inner term on the right-hand side equal to
\begin{align}
    \label{lambda-m-1}
    \lim^\times_{\lambda \to \mathfrak{b}} \frac{e^{-2\pi i \lambda l}}{(e^{-2\pi i \lambda l} - 1)^{l + 1}} \int_C \frac{k(z)}{z^\lambda} \ln^j z \, \mathrm{d}z, \qquad \mathfrak{b} \in \mathbb{N}.
\end{align}
We justify this step by invoking the linearity of the regularized limit \cite{galapon2023}. 

Define the functions
\begin{align}
    \label{lambda-m-2}
    f(\lambda) = e^{-2\pi i \lambda l} \int_C \frac{k(z)}{z^\lambda} \ln^j z \, \mathrm{d}z, \qquad g(\lambda) = \left(e^{-2\pi i \lambda l} - 1\right)^{l + 1}.
\end{align}
According to equation \eqref{theorem2.1-001}, we need to compute the following derivatives:
\begin{align}
    \label{lambda-m-3}
    f^{(l+1)}(\mathfrak{b}), \quad f^{(l+1-k)}(\mathfrak{b}), \quad g^{(l+1)}(\mathfrak{b}), \quad \text{and} \quad g^{(r+l+1)}(\mathfrak{b}).
\end{align}
We enforce that the integrand is analytic in \( z \in \mathbb{C} \), depends smoothly on the parameter \( \lambda \), and that the contour \( C \) is chosen to avoid singularities appropriately. Under these conditions, differentiation under the integral sign is justified.

To differentiate \(f(\lambda)\), we apply Leibniz’s rule:
\begin{align}
    \label{fw(m)}
    f^{(l+1)}(\mathfrak{b}) &= \frac{d^{l+1}}{d\lambda^{l+1}} \left[ e^{-2\pi i \lambda l} \int_C \frac{k(z)}{z^\lambda} \ln^j z \, \mathrm{d}z \right]_{\lambda = \mathfrak{b}} \\
    &= (-1)^{l+1} \sum_{q=0}^{l+1} \binom{l+1}{q} (2\pi i)^{l+1-q} \int_C \frac{k(z)}{z^{\mathfrak{b}}} \ln^{j+q} z \, \mathrm{d}z. \nonumber
\end{align}
We follow the same process to find \(f^{(l+1-k)}(\mathfrak{b})\):
\begin{align}
    \label{f(w-k)(m)}
    f^{(l+1-k)}(\mathfrak{b}) = (-1)^{l+1-k} \sum_{q=0}^{l+1-k} \binom{l+1-k}{q} (2\pi i)^{l+1-k-q} \int_C \frac{k(z)}{z^{\mathfrak{b}}} \ln^{j+q} z \, \mathrm{d}z.
\end{align}
Next, we compute the derivatives of \(g(\lambda)\). For \(g^{(l+1)}(\mathfrak{b})\), we binomially expand the expression and differentiate term by term:
\begin{align}
    \label{gw(m)}
    g^{(l+1)}(\mathfrak{b}) &= \frac{d^{l+1}}{d\lambda^{l+1}} \left[ \left(e^{-2\pi i \lambda l} - 1\right)^{l + 1} \right]_{\lambda = \mathfrak{b}} \\
    &= (-2\pi i)^{l+1} (l+1)!. \nonumber
\end{align}
Here we used the series representation of the Stirling number of the second kind \cite[26.8.6]{NIST}:
\begin{align}
    \mathbf{S}^n_k = \frac{1}{k!} \sum_{i=0}^k (-1)^i \binom{k}{i} (k - i)^n.
\end{align}
Applying the same method, we find:
\begin{align}
    \label{g(r+w)(m)}
    g^{(r+l+1)}(\mathfrak{b}) &= (-2\pi i)^{r+l+1} \sum_{t=0}^{l+1} \binom{l+1}{t} (-1)^t (l+1 - t)^{r+l+1} \\
    &= (-2\pi i)^{r+l+1} (l+1)! \, \mathbf{S}^{r+l+1}_{l+1}. \nonumber
\end{align}

We substitute equations \eqref{fw(m)}, \eqref{f(w-k)(m)}, \eqref{gw(m)}, and \eqref{g(r+w)(m)} into equation \eqref{theorem2.1-001} and use the result to evaluate equation \eqref{lambda-m}, obtaining
\begingroup
\allowdisplaybreaks
\begin{flalign}
    \label{lambda-m-01}
    \bbint{0}{a} \frac{k(t) \ln^n{t}}{t^{\mathfrak{b}}} \, \mathrm{d}t 
    &= \frac{1}{2\pi i} \int_C \frac{k(z)}{z^{\mathfrak{b}}} \ln^{n+1}{z} \, \mathrm{d}z 
    - \frac{1}{2} \int_C \frac{k(z)}{z^{\mathfrak{b}}} \ln^n{z} \, \mathrm{d}z \\
    &\quad \quad + \sum_{j=0}^{n-1} \binom{n}{j} (2\pi i)^{n-j} 
    \sum_{l=1}^{n-j} (-1)^l \, l! \, \mathbf{S}^{n-j}_{l} 
    \sum_{k=0}^{l+1} \frac{B_k^{(l+1)}}{k!} \nonumber \\
    &\quad \quad \quad \quad \times \sum_{q=0}^{l+1-k} \frac{l^{l-q-k+1}}{q! \, (l-q-k+1)!} 
    \frac{1}{(2\pi i)^q} \int_C \frac{k(z)}{z^{\mathfrak{b}}} \ln^{j+q}{z} \, \mathrm{d}z. \nonumber
\end{flalign}
\endgroup
We define \( B^{(l)}_0 = 1 \), and express \( B^{(l+1)}_k \) as
\begin{align}
    \label{stirling-pi-00}
    B^{(l+1)}_k = \frac{k!}{(-2\pi i)^k} \sum_{i=1}^{p(k)} J_i^{(k)}! 
    \prod_{r=1}^{k} \frac{1}{m_{ri}!} 
    \left[ -\frac{(-2\pi i)^r (l+1)!}{(r + l + 1)!} \, \mathbf{S}^{r + l + 1}_{l + 1} \right]^{m_{ri}}.
\end{align}
By applying the alternative expression for the regularized limit given in equation \eqref{reg-lim-rest-sum} and using the definitions of \( J_i^{(k)} \) and \( m_{ri} \), we rewrite equation \eqref{stirling-pi-00} as
\begingroup
\allowdisplaybreaks
\begin{align}
    \label{stirling-pi-02}
    B^{(l+1)}_k 
    = \sum_{t=1}^{k} (-1)^t 
    \sum_{\substack{r_1 + \cdots + r_t = k \\ r_i \in \mathbb{N}}} 
    \frac{\displaystyle \binom{k}{r_1, \ldots, r_t}}{\displaystyle \binom{r_1 + l + 1}{l + 1} \cdots \binom{r_t + l + 1}{l + 1}} 
    \, \mathbf{S}^{r_1 + l + 1}_{l + 1} \cdots \mathbf{S}^{r_t + l + 1}_{l + 1}.
\end{align}
\endgroup
The Bernoulli number \( B^{(l+1)}_k \) of order (\( l+1 \)), as expressed in \eqref{stirling-pi-02}, appears in the same form as in \cite{jeong2005explicit}.

We isolate the case $l=0$ in equation~\eqref{lambda-m-01} because $\mathbf{S}^n_0 = \delta_{n,0}$.  The evaluation is reflected by the first and second terms. The third term (LT) has a quadruple summation in the form 
\begin{flalign}
\label{app-lambda-m-03}
\mathbf{\hat{Q}_s} = \sum_{j=0}^{n-1} \; \sum_{l=1}^{n-j} \; \sum_{k=0}^{l+1} \; \sum_{q=0}^{l+1-k},
\end{flalign}
where the summand is given by
\begin{align}
\label{app-lambda-m-03-sum}
    \binom{n}{j} (2\pi i)^{n-j} 
    (-1)^l \, l! \, \mathbf{S}^{n-j}_{l} 
     \frac{B_k^{(l+1)}}{k!}  \frac{l^{l-q-k+1}}{q! \, (l-q-k+1)!} 
    \frac{1}{(2\pi i)^q} \int_C \frac{k(z)}{z^{\mathfrak{b}}} \ln^{j+q}{z} \, \mathrm{d}z. 
\end{align}
We combine the indices $j$ and $q$ and define
\begin{align}
j + q = n + 1 - m,
\end{align}
where $m$ is a positive integer from $0$ to $n+1$. We write $j$ in terms of $n$, $m$, and $q$ and rearrange all terms according to the powers of $\ln z$, resulting in an equivalent operator given by
\begingroup
\allowdisplaybreaks
\begin{flalign}
\label{app-lambda-m-04}
\mathbf{\hat{Q}_s} &= \Biggl[ 
\sum_{m=2-\delta_{n,1}}^{n} \sum_{l=1}^{m} \sum_{k=0}^{l} \delta_{q,1} 
+ \sum_{m=2}^{n-1} \sum_{q=2}^{n+1-m} \sum_{l=q-1}^{m+q-1} \sum_{k=0}^{l+1-q} \\
&\hspace{1cm} + \sum_{m=2}^{n+1} \sum_{l=1}^{m-1} \sum_{k=0}^{l+1} \delta_{q,0} 
+ \sum_{m=0}^{1 - \delta_{n,1}} \sum_{q=2-m}^{n+1-m} \sum_{l=q-1}^{m+q-1} \sum_{k=0}^{l+1-q} 
\Biggr] \delta_{j, n+1 - m - q} \nonumber \\
&= \mathbf{\hat{Q}_{s_1}} + \mathbf{\hat{Q}_{s_2}} + \mathbf{\hat{Q}_{s_3}} + \mathbf{\hat{Q}_{s_4}}. \nonumber
\end{flalign}
\endgroup

To compute $\mathbf{\hat{Q}_{s_1}}$ acting on the summand given in equation~\eqref{app-lambda-m-03-sum}, we adjust the indices and express $B^{(l+1)}_k$ using the Stirling numbers of the first kind. This yields
\begingroup
\allowdisplaybreaks
\begin{align}
\mathrm{LT}_1 &= \sum_{m=2 - \delta_{n,1}}^{n} \binom{n}{m} (2\pi i)^{m-1} \int_C \frac{k(z)}{z^{\mathfrak{b}}} \ln^{n+1 - m} z \, \mathrm{d}z 
\left( \sum_{l=1}^{m} (-1)^l \mathbf{S}^m_l \sum_{k=0}^{l} \mathbf{s}^{l+1}_{l+1-k} l^{l-k} \right) \\
&= \sum_{m=2 - \delta_{n,1}}^{n} \binom{n}{m} (2\pi i)^{m-1} \int_C \frac{k(z)}{z^{\mathfrak{b}}} \ln^{n+1 - m} z \, \mathrm{d}z 
\left( \sum_{l=1}^{m} (-1)^l (l-1)! \mathbf{S}^m_l \right) \nonumber \\
&= 0. \nonumber
\end{align}
\endgroup
The solution in simplifying the inner summation inside the parentheses uses the generating function of the Stirling numbers of the first kind and a well-known identity for the Stirling numbers of the second kind given by \cite{StirlingFirstKind, StirlingSecondKind}
\begin{gather}
    \label{gen-func-s1}
    \sum_{k=0}^n \mathbf{s}^n_k \, x^k = n! \binom{x}{n}, \\
    \sum_{l=1}^m (-1)^l \, (l-1)! \, \mathbf{S}^m_l = 0.
\end{gather}

Next, we evaluate $\mathbf{\hat{Q}_{s_2}}$:
\begin{align}
\mathrm{LT}_2 &= \sum_{m=1}^{n-2} \int_C \frac{k(z)}{z^{\mathfrak{b}}} \ln^{n - m} z \, \mathrm{d}z 
\sum_{q=0}^{n+1 - m} \binom{n}{m + q + 2} (2\pi i)^{m+2} \frac{(-1)^q}{(q + 2)!} \\
&\hspace{1cm} \times \left( \sum_{l=1}^{m+2} (-1)^{l-1} (q+l)! \sum_{k=0}^{l-1} \frac{B^{(q+l+1)}_k}{k!} 
\frac{(q+l)^{l-k-1}}{(l - k - 1)!} \mathbf{S}^{q+m+2}_{q+l} \right) \nonumber \\
&= 0. \nonumber
\end{align}
Lemma~\ref{theorem_S2-zero} ensures the inner summation vanishes.

We now apply $\mathbf{\hat{Q}_{s_3}}$:
\begingroup
\allowdisplaybreaks
\begin{align}
\mathrm{LT}_3 &= \sum_{m=1}^{n} \binom{n}{m} (2\pi i)^m \int_C \frac{k(z)}{z^{\mathfrak{b}}} \ln^{n - m} z \, \mathrm{d}z \\
&\hspace{1.5cm} \times \left( \sum_{l=1}^{m} (-1)^l \; l! \; \mathbf{S}^m_l 
\sum_{k=0}^{l+1} \frac{B^{(l+1)}_k}{k!} \frac{l^{l+1 - k}}{(l+1 - k)!} \right) \nonumber \\
&= \sum_{m=1}^{n} \binom{n}{m} (2\pi i)^m \; \frac{B_{m+1}}{m+1} 
\int_C \frac{k(z)}{z^{\mathfrak{b}}} \ln^{n - m} z \, \mathrm{d}z. \nonumber
\end{align}
\endgroup
The inner summation inside the parentheses is discussed in Lemma~\ref{bernoulli-numbers} and is equal to \(B_{m+1}/(m+1)\) where \(B_{m+1}\) is the Bernoulli number.

For $\mathbf{\hat{Q}_{s_4}}$, we consider two cases. In the first case, when \(m=0\), adjust the index and represent \(B^{(l+1)}_k\) in terms of the Stirling number of the first kind to obtain
\begin{align}
\label{ST_40}
\mathrm{LT}_{40} &= \left( \sum_{q=0}^{n-1} (-1)^{q+1} \binom{n}{q+1} \frac{1}{q + 2} \right) 
\frac{1}{2\pi i} \int_C \frac{k(z)}{z^{\mathfrak{b}}} \ln^{n+1} z \, \mathrm{d}z.
\end{align}
We evaluate the summation:
\begin{align}
\label{ST_40-01}
\sum_{q=0}^{n-1} (-1)^{q+1} \binom{n}{q+1} \frac{1}{q+2} &= 
\sum_{q=1}^{n} (-1)^q \binom{n}{q} \frac{1}{q+1} 
= \int_0^1 \sum_{q=1}^n (-1)^q \binom{n}{q} x^q \, \mathrm{d}x \\
&= \int_0^1 \left( (1 - x)^n - 1 \right) \mathrm{d}x = -\frac{n}{n+1}. \nonumber
\end{align}
Substituting equation \eqref{ST_40-01} into equation \eqref{ST_40}, we obtain:
\begin{align}
\label{ST_40-1}
\mathrm{LT}_{40} &= -\frac{n}{n+1} \, \frac{1}{2\pi i} 
\int_C \frac{k(z)}{z^{\mathfrak{b}}} \ln^{n+1} z \, \mathrm{d}z.
\end{align}
The second case is when \(m=1\),
\begingroup
\allowdisplaybreaks
    \begin{align}
        \mathrm{LT}_{41} & = \sum_{q=1}^{n} \binom{n}{q} \frac{(-1)^{q-1}}{q!} \int_C \frac{k(z)}{z^{\mathfrak{b}}} \ln{}^{n}{z} \, \mathrm{d}z \\
        & \hspace{1cm}\times \Biggl( \sum_{l=0}^1 (-1)^l \, (q+l-1)! \, \sum_{k=0}^{l} \, \frac{B^{(q+l)}_k}{k!} \, \frac{(q+l-1)^{l-k}}{(l-k)!} \, \mathbf{S}^{q}_{q+l-1} \Biggr) \nonumber\\
        & = \sum_{q=1}^{n} \binom{n}{q} \frac{(-1)^{q-1}}{q!} \int_C \frac{k(z)}{z^{\mathfrak{b}}} \ln{}^{n}{z} \, \mathrm{d}z \, \Biggl( -\frac{1}{2} (q-1) \, q! + \frac{1}{2} (q-1) \, q!  \Biggr) \nonumber \\
        & = 0. \nonumber
    \end{align}
\endgroup
The inner summation inside the brackets is simplified using the following properties of higher-order Bernoulli polynomials and the Stirling numbers of the second kind \cite{StirlingSecondKind, Norlund}:
\begin{gather}
     \label{extra-002}
     \mathbf{S}^n_n = 1, \;\;\; \mathbf{S}^n_{n-1} = \binom{n}{2}, \\
     \label{extra-001}
     B^{(n)}_0 = 1, \;\;\; B^{(n)}_1 = -\frac{n}{2}.
 \end{gather}

We now collect all surviving terms of the quadruple summation and write the resulting expansion as
    \begin{align}
        \label{LT-final}
        \mathrm{LT} & = \mathrm{LT}_{40} + \mathrm{LT}_3   \\
        & = -\frac{n}{(n+1)} \frac{1}{2\pi i} \int_C \frac{k(z)}{z^{\mathfrak{b}}} \ln{}^{n+1}{z} \, \mathrm{d}z \nonumber \\
        & \hspace{2cm}+ \sum_{m=1}^n \binom{n}{m} (2\pi i)^m \; \frac{B_{m+1}}{m+1} \int_C \frac{k(z)}{z^{\mathfrak{b}}} \ln{}^{n-m}{z} \, \mathrm{d}z. \nonumber
    \end{align}
Substituting this final expression into equation \eqref{lambda-m-01} and reordering the index of the summation leads to equation \eqref{theorem3.2}. 
\end{proof}

Using Theorem~\ref{theorem_3.2}, contour integral representations of finite-part integrals with logarithmic singularities can be determined.

\begin{corollary}
    Let $k(z)$ be analytic in the interval $[0,a]$. Then, the contour integral representation of $\bbint{0}{a} t^{-\mathfrak{b}} k(t) \,\mathrm{d}t$ for all positive integer \(\mathfrak{b}\) is given by
    \begin{align}
        \label{theorem3.2-000}
        \bbint{0}{a} \frac{k(t) }{t^{\mathfrak{b}}} \mathrm{d}t = \frac{1}{2 \pi i}\int_C \frac{k(z) }{z^{\mathfrak{b}} } \left( \,   \ln{}{z} - \pi i \, \right)  \mathrm{d}z.
    \end{align}
\end{corollary}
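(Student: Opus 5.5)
This corollary is the case $n=0$ of Theorem~\ref{theorem_3.2}, so the plan is to specialize that formula and simplify. The theorem is stated for positive integers $n$, and the proof of Theorem~\ref{theorem_3.2} isolates the $n=0$ contributions explicitly: the first two terms of equation \eqref{lambda-m-01} come from the $l=0$ term and are valid independently of the quadruple sum, which is empty when $n=0$. We therefore either invoke Theorem~\ref{theorem_3.2} formally at $n=0$, or redo the short computation directly.

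\emph{Formal specialization.} Setting $n=0$ in \eqref{theorem3.2}, the first term becomes $\frac{1}{2\pi i}\int_C k(z)z^{-\mathfrak{b}}\ln z\,\mathrm{d}z$. The sum reduces to the single term $j=0$, which is $\binom{0}{0}(2\pi i)^0 \frac{B_1}{1}\int_C k(z) z^{-\mathfrak{b}}\,\mathrm{d}z$. With the convention $B_1=-\tfrac12$, this gives $-\tfrac12\int_C k(z)z^{-\mathfrak{b}}\,\mathrm{d}z = \frac{1}{2\pi i}\int_C k(z)z^{-\mathfrak{b}}(-\pi i)\,\mathrm{d}z$. Adding the two pieces yields \eqref{theorem3.2-000}.

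\emph{Direct route (in case the extension of Theorem~\ref{theorem_3.2} to $n=0$ is questioned).} By \eqref{eq3.2}, the finite-part integral equals $\lim^\times_{\lambda\to\mathfrak{b}}$ of $(e^{-2\pi i\lambda}-1)^{-1}\int_C k(z)z^{-\lambda}\,\mathrm{d}z$, using \eqref{eq3.1}. Put $f(\lambda)=\int_C k(z)z^{-\lambda}\,\mathrm{d}z$ and $g(\lambda)=e^{-2\pi i\lambda}-1$, which has a simple zero at $\mathfrak{b}$. Corollary~\ref{reglim-n1} then gives $f'(\mathfrak{b})/g'(\mathfrak{b})-f(\mathfrak{b})g''(\mathfrak{b})/(2g'(\mathfrak{b})^2)$. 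We compute the ingredients:
\begin{itemize}
\item $g'(\mathfrak{b})=-2\pi i$ and $g''(\mathfrak{b})=(2\pi i)^2$;
\item differentiating under the integral sign (justified because $C$ is compact and avoids the singularities) gives $f'(\mathfrak{b})=-\int_C k(z)z^{-\mathfrak{b}}\ln z\,\mathrm{d}z$.
\end{itemize}
Substituting, $f'(\mathfrak{b})/g'(\mathfrak{b})=\frac{1}{2\pi i}\int_C k z^{-\mathfrak{b}}\ln z\,\mathrm{d}z$, and the second term is $-\tfrac12 f(\mathfrak{b})$.

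The only point needing care is consistency of conventions: the branch of $\ln z$ must be the one defined earlier ($0\le\arg z<2\pi$), and $B_1=-\tfrac12$ rather than $+\tfrac12$. The direct computation confirms the sign, so no step presents a real obstacle.
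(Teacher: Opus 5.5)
Your proposal is correct and follows the paper's approach: the corollary is the $n=0$ specialization of Theorem~\ref{theorem_3.2} with $B_1=-\tfrac12$. Your direct computation via Corollary~\ref{reglim-n1} with $g(\lambda)=e^{-2\pi i\lambda}-1$ is exactly the $l=0$ term that produces the first two terms of \eqref{lambda-m-01} in the paper's proof, and it usefully covers the fact that Theorem~\ref{theorem_3.2} is formally stated only for positive $n$.
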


\begin{corollary}
    Let $k(z)$ be analytic in the interval $[0,a]$. Then, the contour integral representation of $\bbint{0}{a} t^{-\mathfrak{b}} k(t) \ln{t}\,\mathrm{d}t$ for all positive integer \(\mathfrak{b}\) is given by
    \begin{align}
        \label{theorem3.2-001}
        \bbint{0}{a} \frac{k(t) \ln{}{t}}{t^{\mathfrak{b}}} \mathrm{d}t = \frac{1}{2 \pi i}\int_C \frac{k(z) }{z^{\mathfrak{b}} } \left( \frac{1}{2}\ln{}^2{z} -\pi i \ln{}{z} -\frac{\pi^2}{3} \right)  \mathrm{d}z.
    \end{align}
\end{corollary}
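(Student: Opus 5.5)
This corollary is the case $n=1$ of Theorem~\ref{theorem_3.2}, so the plan is simply to specialize equation \eqref{theorem3.2} and collect terms. With $n=1$, the first term of \eqref{theorem3.2} becomes
\[
\frac{1}{2\pi i\cdot 2}\int_C \frac{k(z)}{z^{\mathfrak b}}\ln^2 z\,\mathrm{d}z .
\]
The sum over $j$ contributes two terms. For $j=1$ it gives $\binom{1}{1}(2\pi i)^0 \tfrac{B_1}{1}$, and for $j=0$ it gives $\binom{1}{0}(2\pi i)^1\tfrac{B_2}{2}$.

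Next insert the Bernoulli numbers $B_1=-\tfrac12$ and $B_2=\tfrac16$. This convention for $B_1$ is the one consistent with the proof of Theorem~\ref{theorem_3.2}, where the $-\tfrac12\int_C \ldots\ln^n z$ term arises. The two coefficients become $-\tfrac12$ on $\int_C k(z)z^{-\mathfrak b}\ln z\,\mathrm{d}z$ and $\tfrac{2\pi i}{12}=\tfrac{\pi i}{6}$ on $\int_C k(z)z^{-\mathfrak b}\,\mathrm{d}z$.

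Finally, factor out $\tfrac{1}{2\pi i}$. This converts $-\tfrac12$ into $\tfrac{1}{2\pi i}(-\pi i)$ and $\tfrac{\pi i}{6}$ into $\tfrac{1}{2\pi i}\cdot\tfrac{(2\pi i)(\pi i)}{6}=\tfrac{1}{2\pi i}\bigl(-\tfrac{\pi^2}{3}\bigr)$. Merging the three integrals into one by linearity gives the integrand $\tfrac12\ln^2 z-\pi i\ln z-\tfrac{\pi^2}{3}$, which is exactly \eqref{theorem3.2-001}.

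The only point needing care is the sign convention for $B_1$. As a cross-check, I would apply \eqref{eq3.3} heuristically: differentiating the $n=0$ representation \eqref{theorem3.2-000} in $\mathfrak b$ should reproduce the $\ln z$ and $\ln^2 z$ structure. I would also verify against the independent route of taking the regularized limit of \eqref{eq3.6} using Corollary~\ref{reglim-n2}, since $(e^{-2\pi i\lambda}-1)^2$ has a double zero at $\mathfrak b$.
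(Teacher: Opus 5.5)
Your proposal is correct and follows the paper's own route: the corollary is simply the $n=1$ case of Theorem~\ref{theorem_3.2} with $B_1=-\tfrac12$ and $B_2=\tfrac16$, and your collection of terms into $\tfrac12\ln^2 z-\pi i\ln z-\tfrac{\pi^2}{3}$ is right. One caution about your optional cross-check: naively differentiating \eqref{theorem3.2-000} in $\mathfrak b$ gives the wrong coefficients, namely $\tfrac{1}{2\pi i}\int_C k(z)z^{-\mathfrak b}(\ln^2 z-\pi i\ln z)\,\mathrm{d}z$, which for $k\equiv1$, $\mathfrak b=1$, $a=1$ yields $-\pi^2/3$ instead of the correct $0$, because the regularized limit does not commute with $\lambda$-differentiation; the regularized-limit check on \eqref{eq3.6}, using Corollaries~\ref{reglim-n1} and~\ref{reglim-n2}, is the valid one and does reproduce the formula.
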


\begin{corollary}
      Let $k(z)$ be analytic in the interval $[0,a]$. Then, the contour integral representation of $\bbint{0}{a} t^{-\mathfrak{b}} k(t) \ln{}^2{t}\,\mathrm{d}t$ for all positive integer \(\mathfrak{b}\) is given by
    \begin{align}
        \label{cont-int-rep-n=2}
        \bbint{0}{a} \frac{k(t) \ln{}^2{t}}{t^{\mathfrak{b}}} \mathrm{d}t = \frac{1}{2 \pi i}\int_C \frac{k(z) }{z^{\mathfrak{b} } } \left( \frac{1}{3}\ln{}^3{z} -\pi i \ln{}^2{z} -\frac{2 \pi^2}{3} \ln{}{z} \right)  \mathrm{d}z.
    \end{align}
\end{corollary}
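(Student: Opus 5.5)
The plan is to obtain the statement by specializing Theorem~\ref{theorem_3.2} to $n=2$ and simplifying the Bernoulli-number coefficients. No new analysis is needed: the hypotheses on $k(z)$ and on the contour $C$ are exactly those of Theorem~\ref{theorem_3.2}.

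First, with $n=2$ the leading term of \eqref{theorem3.2} becomes
\[
\frac{1}{2\pi i\,(3)}\int_C \frac{k(z)}{z^{\mathfrak{b}}}\ln^3 z\,\mathrm{d}z=\frac{1}{2\pi i}\int_C \frac{k(z)}{z^{\mathfrak{b}}}\,\frac{1}{3}\ln^3 z\,\mathrm{d}z .
\]

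Next, I evaluate the three summands $j=2,1,0$ of the remaining sum, using $B_1=-\tfrac12$, $B_2=\tfrac16$ and $B_3=0$.
\begin{itemize}
\item For $j=2$, the coefficient is $\binom{2}{2}(2\pi i)^0 B_1/1=-\tfrac12=\frac{1}{2\pi i}(-\pi i)$. This produces the $-\pi i\ln^2 z$ term.
\item For $j=1$, the coefficient is $\binom{2}{1}(2\pi i)\,B_2/2=\frac{2\pi i}{6}=\frac{1}{2\pi i}\cdot\frac{(2\pi i)^2}{6}=\frac{1}{2\pi i}\left(-\frac{2\pi^2}{3}\right)$. This produces the $-\frac{2\pi^2}{3}\ln z$ term.
\item For $j=0$, the coefficient is $(2\pi i)^2 B_3/3=0$. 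Hence no constant term appears in the integrand.
\end{itemize}

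Finally, I combine the four contributions by linearity of the contour integral, pulling out the common factor $1/(2\pi i)$. This gives \eqref{cont-int-rep-n=2}.

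The only delicate point is the sign convention for $B_1$. Theorem~\ref{theorem_3.2} was derived with $B_1=-\tfrac12$, which is confirmed by the $n=0$ case: there $B_1=-\tfrac12$ yields the $-\pi i$ of \eqref{theorem3.2-000}. As a consistency check, I would also confirm that the $n=1$ case with the same convention reproduces \eqref{theorem3.2-001}.
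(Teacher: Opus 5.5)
Your proposal is correct and matches how the paper obtains this corollary: it is the $n=2$ specialization of Theorem~\ref{theorem_3.2}. Your coefficient computations are right: $B_1=-\tfrac12$ gives $-\pi i$, $B_2=\tfrac16$ gives $-\tfrac{2\pi^2}{3}$, and $B_3=0$ removes the constant term. The $B_1=-\tfrac12$ convention can also be read off directly in the proof of Theorem~\ref{theorem_3.2}, where the $j=n$ contribution appears explicitly as $-\tfrac12\int_C k(z)z^{-\mathfrak{b}}\ln^n z\,\mathrm{d}z$.
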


We compare equations \eqref{theorem3.2} and \eqref{lambda-m} to equations \eqref{theorem3.1}, \eqref{f-n-lambda}, and \eqref{f-n-lambda-01} and obtain the following theorem.
\begin{theorem}
    \label{nth-der-reglim}
    Let
        \begin{align}
            f(\lambda) = \frac{1}{(e^{-2 \pi i \lambda} -1) \, z^\lambda}, \;\;\;\;\;\; \mathfrak{b} \in \mathbb{N}_0, \;\; z \in \mathbb{C} \setminus \{0\}, \;\; 0< \mathrm{Arg} \, z \leq 2\pi,
        \end{align}
    Then, it satisfies the following relation  
        \begin{align}
            \lim^{\times}_{\lambda \to \mathfrak{b}} f^{(n)}(\lambda) = (-1)^n  \Biggl[ \; \frac{1}{2\pi i \, (n+1)} \, \frac{\ln^{n+1} z}{z^{\mathfrak{b}}} + \sum^{n}_{j=0} \binom{n}{j} (2\pi i )^{n-j} \; \frac{B_{n-j+1}}{n-j+1} \, \frac{\ln^{j} z}{z^{\mathfrak{b}}}\; \Biggr].
        \end{align}
\end{theorem}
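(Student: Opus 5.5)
We prove Theorem~\ref{nth-der-reglim} directly, as a pointwise identity in $z$, rather than by reading it off the contour integral statements. This avoids having to cancel the $z$-integrals against an arbitrary $k$.

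\textbf{Step 1: rewrite $f$.} Fix $z\neq 0$ with the stated branch of $\ln z$. Set $F(\lambda)=(e^{-2\pi i\lambda}-1)^{-1}$. Then
\[
f(\lambda)=F(\lambda)\,e^{-\lambda\ln z}.
\]
Each $f^{(n)}$ is meromorphic near $\mathfrak b$, with a pole of order at most $n+1$.

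\textbf{Step 2: reduce to the constant Laurent coefficient.} By the contour representation \eqref{reg-lim-cont-int}, we have $\lim^\times_{\lambda\to\mathfrak b}f^{(n)}(\lambda)=a_0\big(f^{(n)}\big)$. Differentiating the Laurent series of $f$ term by term shows
\[
a_0\big(f^{(n)}\big)=n!\,c_n,
\]
where $c_n$ is the coefficient of $(\lambda-\mathfrak b)^n$ in the Laurent expansion of $f$ about $\mathfrak b$. The principal part contributes nothing, since differentiating negative powers never produces a constant term.

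\textbf{Step 3: expand both factors.} Put $u=\lambda-\mathfrak b$ and $w=-2\pi i u$. Since $e^{-2\pi i\mathfrak b}=1$, we get $F=(e^{w}-1)^{-1}$. The generating function $w/(e^w-1)=\sum_m B_m w^m/m!$ (the $m=1$ case of \eqref{bernumbers-high-ord}) gives
\[
F=\sum_{m\ge0}\frac{B_m}{m!}(-2\pi i)^{m-1}u^{m-1}.
\]
Also, with $L=\ln z$,
\[
e^{-\lambda L}=z^{-\mathfrak b}\sum_{p\ge0}\frac{(-L)^p}{p!}u^p .
\]

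\textbf{Step 4: extract $c_n$ and match.} The Cauchy product gives
\[
c_n=z^{-\mathfrak b}\sum_{m+p=n+1}\frac{B_m}{m!}\,(-2\pi i)^{m-1}\,\frac{(-L)^p}{p!}.
\]
Multiply by $n!$ and use $(-1)^{m-1}(-1)^p=(-1)^n$.
\begin{itemize}
\item The term $m=0$ gives $(-1)^n L^{n+1}/\big(2\pi i(n+1)\,z^{\mathfrak b}\big)$.
\item For $m\ge1$, set $j=p=n+1-m$. Using $n!/(m!\,j!)=\binom{n}{j}/(n-j+1)$, these terms give
\[
(-1)^n\binom{n}{j}(2\pi i)^{n-j}\frac{B_{n-j+1}}{n-j+1}\,\frac{L^j}{z^{\mathfrak b}}.
\]
\end{itemize}
Summing over $j=0,\dots,n$ yields exactly the claimed formula. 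One convention must be checked: the paper's $j=n$ term needs $B_1=-1/2$, which is the sign produced by $w/(e^w-1)$. This agrees with the $-\tfrac12$ appearing in \eqref{lambda-m-01}.

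\textbf{Main obstacle.} The real point is Step 2, justifying that the regularized limit of $f^{(n)}$ equals $n!$ times the $u^n$ coefficient of $f$. It follows from the definition \eqref{eq2.2} applied to the termwise-differentiated Laurent series, which converges uniformly on a punctured disc. The rest is bookkeeping of signs and of the branch of $\ln z$. One check on that branch: $\mathrm{Arg}\,z$ must be taken in $(0,2\pi]$, consistent with $\ln z$ in Theorem~\ref{theorem_3.2}.

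As a cross-check, the identity can also be recovered by integrating both sides against $k(z)$ over $C$ and comparing Theorem~\ref{theorem_3.2} with \eqref{eq3.3} and \eqref{eq3.1}. That route only gives equality after integration, so the direct computation above is the one used to prove the theorem.
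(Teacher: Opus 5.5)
Your proof is correct, and it takes a genuinely different route from the paper.

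\textbf{The paper's route.} The paper never expands $f$ directly. It first proves Theorem~\ref{theorem_3.2} by applying the partition formula of Theorem~\ref{theorem2.1} to each term $e^{-2\pi i\lambda l}(e^{-2\pi i\lambda}-1)^{-l-1}$ of the non-integer representation in Theorem~\ref{theorem_3.1}. It writes the resulting coefficients through Stirling numbers and higher-order Bernoulli numbers, and collapses a quadruple sum using Lemmas~\ref{theorem_S2-zero}--\ref{bernoulli-numbers}. The present statement is then obtained by comparing Theorem~\ref{theorem_3.2} with Theorem~\ref{theorem_3.1}, \eqref{eq3.3} and \eqref{eq3.1}. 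As you observe, that comparison by itself only gives the identity after integration against $k(z)$ over $C$.

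\textbf{Your route.} You work pointwise in $z$. Because $e^{-2\pi i\mathfrak b}=1$, the Laurent series of $f$ at $\mathfrak b$ is the Cauchy product of $w^{-1}\cdot w/(e^{w}-1)$ with $e^{-uL}$. The constant term of $f^{(n)}$ is then $n!$ times the $u^n$ coefficient of $f$. Your bookkeeping checks out: the $m=0$ term, the sign $(-1)^{m-1+p}=(-1)^n$, the identity $n!/(m!\,j!)=\binom{n}{j}/(n-j+1)$, and the convention $B_1=-\tfrac12$.

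\textbf{Comparison.} Your approach gives a short, rigorous proof of the pointwise statement using only the classical Bernoulli generating function. It needs neither Theorem~\ref{theorem2.1}, nor Stirling numbers, nor higher-order Bernoulli numbers. Read in reverse, it also yields a much shorter proof of Theorem~\ref{theorem_3.2} itself: integrate against $k(z)$ over $C$, interchange $\lim^\times$ with $\int_C$ via \eqref{reg-lim-cont-int} and Fubini, then invoke \eqref{eq3.2} and \eqref{eq3.3}. The paper's route instead makes explicit the link to the Stirling-number structure of the non-integer case. It also produces the combinatorial identities of Lemmas~\ref{theorem_S2-zero}--\ref{bernoulli-numbers} as by-products.

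A minor point: your branch remark is stronger than necessary. The computation holds for any fixed determination $L$ of $\ln z$, provided the same $L$ defines both $z^{\lambda}=e^{\lambda L}$ and $\ln^j z=L^j$.
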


\subsection{Finite-part integral with logarithmic singularity}
The finite-part integral and Mellin transform are two closely related mathematical concepts. Equation \eqref{eq3.2} lists their relationships. To further the discussion, we formally define specific terms used in this section: \textit{Mellin-type integral}, \textit{Mellin-type divergent integral}, and \textit{non-Mellin-type divergent integral}. A Mellin-type integral is an integral for which the finite-part integral and Mellin transform yield the same solution; it falls under the first condition given in equation \eqref{eq3.2}. A Mellin-type divergent integral is a divergent integral whose finite-part equals the analytic continuation or regularized limit of the analytic continuation of its Mellin transform, depending on the conditions given in equation \eqref{eq3.2}. Finally, a non-Mellin-type divergent integral is a Mellin-type divergent integral whose Mellin transform does not exist. This raises the question: how do we determine the finite-part integral of a non-Mellin-type integral if its Mellin transform does not exist? The next section answers this question by providing a specific example.

\subsubsection{\texorpdfstring{\textbf{Mellin-type divergent integral}}{Mellin-type divergent integral}}

In quantum electrodynamics, the Euler-Heisenberg lagrangian describes a nonperturbative interaction of a spinor loop in a constant electromagnetic background field \cite{Fliegner1997, Gerald2002, Huet-Idrish-etal}. The renormalization procedure applied to the Lagrangian includes mass and charge renormalization. We subtract the free field contribution and the logarithmically divergent term from it \cite{Gerald2002}. The renormalized Euler-Heisenberg Lagrangian contains a divergent integral in the form
\begin{equation}
    \label{div-int-EHL}
    \int_0^{\infty} \frac{e^{-\beta x}\ln (\beta x)}{x^{\nu + n}} \, \mathrm{d}x,
\end{equation}
where \(0 < \nu < 1\) and \(n\) is a natural number. We can solve the finite-part of equation \eqref{div-int-EHL} through finite-part integration.

\subsubsection*{\texorpdfstring{Case \(\nu \neq 0\)}{Case nu neq 0}}
\textit{Example}. Simplify equation \eqref{div-int-EHL} to obtain
\begin{equation}
    \label{FPI-EHL}
    \bbint{0}{\infty} \frac{e^{-\beta x}\ln (\beta x)}{x^{\nu + n}} \, \mathrm{d}x = \ln \beta \bbint{0}{\infty} \frac{e^{-\beta x}}{x^{\nu + n}} \, \mathrm{d}x + \bbint{0}{\infty} \frac{e^{-\beta x}\ln x}{x^{\nu + n}} \, \mathrm{d}x.
\end{equation}

According to equation \eqref{eq3.2}, the finite-part integrals of the two terms in equation \eqref{FPI-EHL} are equal to their respective Mellin transforms. The Mellin transform of the first term exists and is given by
\begin{equation}
    \label{FPI-EHL-1}
    \ln \beta \bbint{0}{\infty} \frac{e^{-\beta x}}{x^{\nu + n}} \, \mathrm{d}x = \frac{\ln \beta }{\beta {}^{1-n-\nu}} \Gamma (1-n-\nu).
\end{equation}
For the second term, use equation \eqref{eq3.3} to obtain
\begin{equation}
    \label{FPI-EHL-2}
    \bbint{0}{\infty} \frac{e^{-\beta x}\ln x}{x^{\nu + n}} \, \mathrm{d}x = \frac{ (\psi (1-n-\nu )-\ln\beta ) }{\beta ^{1-n-\nu}}  \Gamma (1-n-\nu ).
\end{equation}
By summing equations \eqref{FPI-EHL-1} and \eqref{FPI-EHL-2}, we obtain the final result as
\begin{equation}
    \label{FPI-EHL-3}
    \bbint{0}{\infty} \frac{e^{-\beta x}\ln (\beta x)}{x^{\nu + n}} \, \mathrm{d}x = \frac{  \Gamma' (1-n-\nu ) }{\beta ^{1-n-\nu}}.
\end{equation}

\subsubsection*{\texorpdfstring{Case \(\nu = 0\)}{Case nu = 0}}\textit{Example}. From the relationships listed in equation \eqref{eq3.2}, we find that the finite-part integral of equation \eqref{FPI-EHL-3} at \(\nu = 0\) equals the regularized limit of the right-hand side expressed as 
    \begin{equation}
        \label{FPI-EHL-4}
        \lim_{\nu \to 0}^\times \, \bbint{0}{\infty} \frac{e^{-\beta x}\ln (\beta x)}{x^{\nu + n}} \, \mathrm{d}x = \lim_{\nu \to 0}^\times \, \biggl[-\frac{  1 }{\beta ^{1-n-\nu}} \frac{\mathrm{d}}{\mathrm{d}\nu} \left(\Gamma (1-n-\nu )\right) \biggr] .
    \end{equation}
By applying the reflection property of the gamma function, we rewrite equation \eqref{FPI-EHL-4} as
    \begin{equation}
        \label{FPI-EHL-5}
        \lim_{\nu \to 0}^\times \, \bbint{0}{\infty} \frac{e^{-\beta x}\ln (\beta x)}{x^{\nu + n}} \, \mathrm{d}x = \lim_{\nu \to 0}^\times \, \biggl[\frac{  1 }{\beta ^{1-n-\nu}} \frac{\mathrm{d}}{\mathrm{d}\nu} \left( \frac{(-1)^{n+1} \pi}{\Gamma (n+\nu)\sin (\nu \pi)} \right) \biggr] .
    \end{equation}
Then, perform the derivative to obtain
    \begin{equation}
        \label{FPI-EHL-6}
        \lim_{\nu \to 0}^\times \, \bbint{0}{\infty} \frac{e^{-\beta x}\ln (\beta x)}{x^{\nu + n}} \, \mathrm{d}x = \lim_{\nu \to 0}^\times \, \biggl[\frac{  (-1)^{n+1} \pi }{\beta ^{1-n-\nu}} \left( -\frac{\pi  \cot (\pi  \nu ) \csc (\pi  \nu )}{\Gamma (n+\nu )}-\frac{\csc (\pi  \nu ) \psi (n+\nu )}{\Gamma (n+\nu )} \right) \biggr] .
    \end{equation}

From equation \eqref{FPI-EHL-6}, we consider the regularized limit of the first term by defining
    \begin{equation}
        \label{FPI-EHL-7}
        f(\nu) = \frac{  (-1)^{n} \pi^2  }{\beta ^{1-n-\nu}} \left( \frac{  \cos (\pi  \nu ) }{\Gamma (n+\nu )} \right), \quad \mathrm{and} \quad g(\nu) = \sin^2 (\pi  \nu ).
    \end{equation}
Using Corollary~\ref{reglim-n2}, we calculate the first regularized limit as
   \begin{align}
        \label{FPI-EHL-8}
        \begin{split}
        \lim_{\nu \to 0}^\times \, \biggl[\frac{  (-1)^{n} \pi^2 }{\beta ^{1-n-\nu}} \left( \frac{  \cot (\pi  \nu ) \csc (\pi  \nu )}{\Gamma (n+\nu )} \right) \biggr]  = \frac{(-1)^n \beta^{n-1}}{\Gamma(n)} \biggl[ \frac{1}{2}\ln^2(\beta)  - & \ln(\beta)\psi(n) + \frac{1}{2}\psi(n)^2 \\
        & - \frac{1}{2}\psi^{(1)}(n) - \frac{\pi^2}{6} \biggr].
        \end{split}
    \end{align}
Next, consider the regularized limit of the second term in equation \eqref{FPI-EHL-6} by setting
    \begin{equation}
        \label{FPI-EHL-9}
        f(\nu) = \frac{  (-1)^n \pi }{\beta ^{1-n-\nu}} \left( \frac{\psi (n+\nu )}{\Gamma (n+\nu )} \right), \quad \mathrm{and} \quad g(\nu) = \sin (\pi  \nu )
    \end{equation}
and using Corollary~\ref{reglim-n1}. We find the second regularized limit as
   \begin{align}
        \label{FPI-EHL-10}
        \begin{split}
        \lim_{\nu \to 0}^\times \, \biggl[\frac{  (-1)^n \pi }{\beta ^{1-n-\nu}} \frac{\csc (\pi  \nu ) \psi (n+\nu )}{\Gamma (n+\nu )} \biggr] & = \frac{(-1)^n \beta ^{n-1}}{\Gamma (n)} \biggl[ \psi ^{(1)}(n)- \psi (n)^2 +\ln (\beta ) \psi (n) \biggr].
        \end{split}
    \end{align}
Finally, add equations \eqref{FPI-EHL-8} and \eqref{FPI-EHL-10} to determine the regularized limit of equation \eqref{FPI-EHL-3} as \(\nu \to 0\), yielding
    \begin{equation}
        \label{FPI-EHL-11}
        \lim_{\nu \to 0}^\times \, \bbint{0}{\infty} \frac{e^{-\beta x}\ln (\beta x)}{x^{\nu + n}} \, \mathrm{d}x = \frac{(-1)^{n+1} \beta^{n-1}}{\Gamma(n)} \left[ \frac{\pi^2}{6} - \frac{1}{2}\ln^2\beta + \frac{1}{2}\psi(n)^2 - \frac{1}{2}\psi^{(1)}(n) \right].
    \end{equation}

We recover the divergent integral that forms part of the Euler-Heisenberg Lagrangian by evaluating equation \eqref{FPI-EHL-11} at \(n=2\), which yields the finite-part integral
    \begin{equation}
        \label{FPI-EHL-12}
        \bbint{0}{\infty} \frac{e^{-\beta x}\ln (\beta x)}{x^{2}} \, \mathrm{d}x = 
        -\frac{1}{12} \beta  \left(-6 \ln ^2(\beta )+\pi ^2+6 (\gamma -2) \gamma +12\right).
    \end{equation}

\subsubsection{\texorpdfstring{\textbf{Non-Mellin-type divergent integral}}{Non-Mellin-type divergent integral}}
\label{non-mellin-example}
    \begin{figure}[t]
        \centering
        \includegraphics[width=6cm]{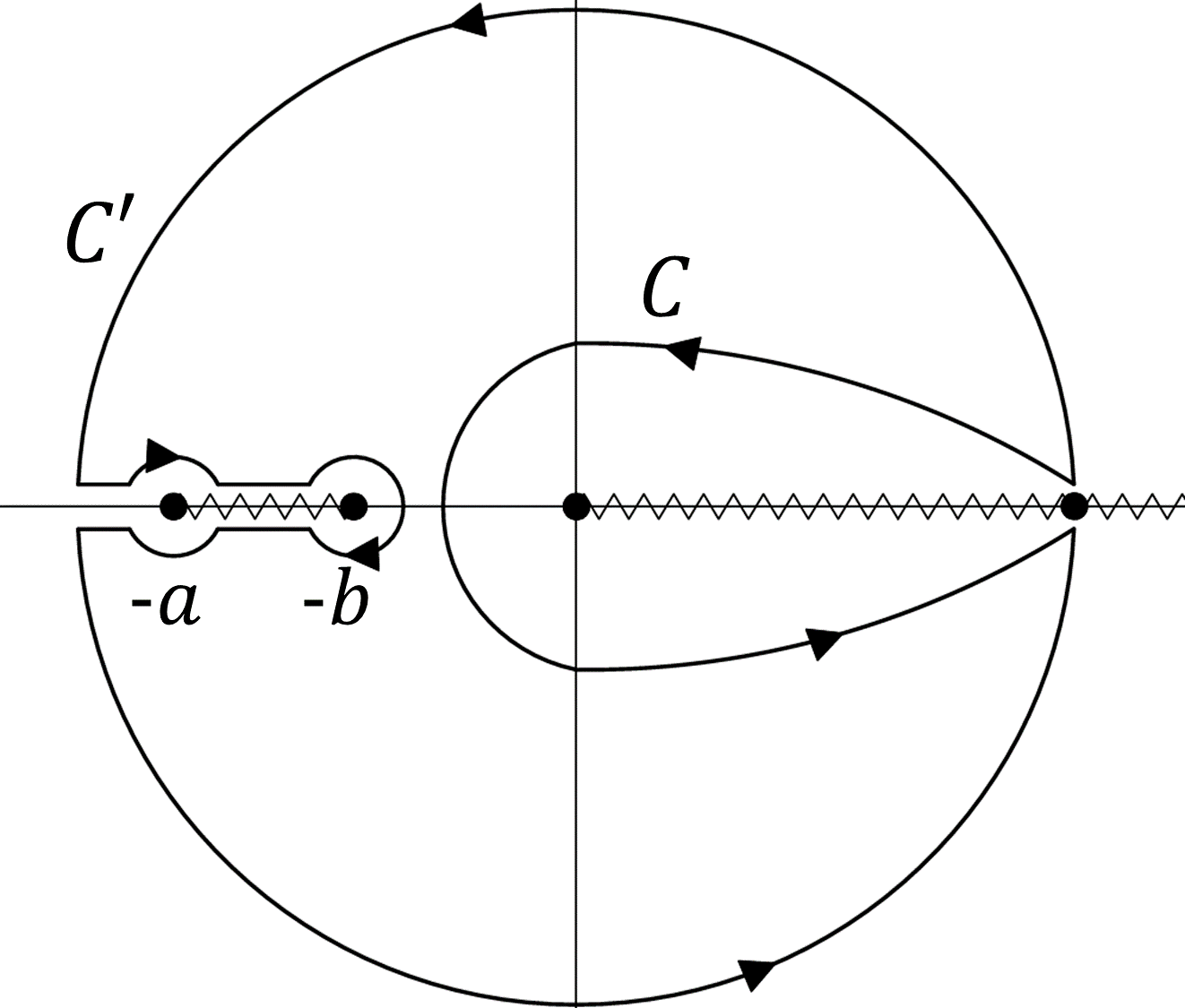}
        \caption{Contour  C deformed into C'. It does not enclose any pole or cross any branch cut of \(k(z)\). }
        \label{fig:3-002}
    \end{figure}
\noindent \textit{Example}. To address the question regarding the nonexistence of the Mellin transform for a non-Mellin-type divergent integral, consider the finite-part integral
\begin{equation}
    \label{ex4.1.1-001}
    \bbint{0}{\infty} \frac{\sqrt{a+x}}{\sqrt{b+x}} \frac{\ln{x}}{x^\lambda} \, \mathrm{d}x,
\end{equation}
where \( \mathrm{Re}(\lambda) > 1 \), \( a > b > 0 \), and \( \lambda = m \) is a positive integer. For the integrand to converge at infinity, we require \( \mathrm{Re}(\lambda) > 1 \); however, for convergence near zero, \( \mathrm{Re}(\lambda) < 1 \) is necessary. This contradiction implies that there is no strip of analyticity, that is, the Mellin transform does not exist. 

We are thus left with the task of evaluating the finite-part of this divergent integral without relying on the Mellin transform. Under the given conditions and assuming the integrand admits a complex extension, we apply equation \eqref{eq3.6} to obtain the contour integral representation
\begin{align}
    \begin{split}
    \bbint{0}{\infty} \frac{\sqrt{a+x}}{\sqrt{b+x}} \, \frac{\ln{x}}{x^{\nu + m}}\; \mathrm{d}x &= \frac{1}{ e^{-2\pi i \nu}-1 } \int_C \frac{\sqrt{a + z}}{\sqrt{b + z}} \frac{\ln z}{z^{\nu + m}} \, \mathrm{d}z \\
    &\quad - \frac{2\pi i e^{-2\pi i \nu}}{ ( e^{-2\pi i \nu}-1 )^2 } \int_C \frac{\sqrt{a + z}}{\sqrt{b + z}} \frac{1}{z^{\nu + m}} \, \mathrm{d}z. 
    \end{split}
\end{align}
We deform the contour \(C\) into \(C'\), as shown in figure \ref{fig:3-002}, and simplify all terms to express the finite-part integral \eqref{ex4.1.1-001} as
\begin{align}
    \label{nonmel_00}
    \begin{split}
    & \bbint{0}{\infty} \frac{\sqrt{a+x}}{\sqrt{b+x}} \frac{\ln{x}}{x^{\nu + m}}\; \mathrm{d}x = - \frac{2i (-1)^{m} e^{-\pi i \nu}}{ e^{-2\pi i \nu} -1 } \int_b^a \frac{\sqrt{a-x}}{\sqrt{x-b}} \frac{\ln{x}}{x^{\nu + m}} \mathrm{d}x \\
    & \quad + \frac{2\pi (-1)^{m} e^{-\pi i \nu}}{ e^{-2\pi i \nu} -1 } \int_b^a \frac{\sqrt{a-x}}{\sqrt{x-b}} \frac{1}{x^{\nu + m}} \mathrm{d}x - \frac{4\pi (-1)^{m} e^{-3\pi i \nu}}{ ( e^{-2\pi i \nu} -1 )^2 } \int_b^a \frac{\sqrt{a-x}}{\sqrt{x-b}} \frac{1}{x^{\nu + m}} \mathrm{d}x.
    \end{split}
\end{align}

We reduce the evaluation of the finite-part integral to solving the definite integrals in \eqref{nonmel_00}. To evaluate the integrals in the second and third terms, we apply the tabulated formula from \cite[p. 331, 3.259.2]{Gradshteyn-Ryzhik}:
\begin{align}
    \label{nonmel_01}
    \int_0^u x^{\nu-1} & (u-x)^{\mu -1} (x^m + b^m)^\lambda \; \mathrm{d}x = b^{m\lambda} u^{\mu + \nu - 1} \mathrm{B}(\mu, \nu) \\
    & \times {}_{m+1}F_m \left(-\lambda, \frac{\nu}{m}, \ldots, \frac{\nu + m -1}{m} \, ; \, \frac{\mu + \nu}{m}, \ldots, \frac{\mu + \nu + m -1}{m} \, ; \, \frac{-u^m}{b^m} \right), \nonumber
\end{align}
where \(\mathrm{Re}(\mu) > 0\), \(\mathrm{Re}(\nu) > 0\), \(\mathrm{Re}(\lambda) > 0\), and \(|\arg(u/b)| < \pi/m\). Here, \({}_{m+1}F_m\) denotes the generalized hypergeometric function. It is defined by the series \cite[26.2.1]{NIST}
\begin{align}
    {}_{p}F_q(a_1, \ldots, a_p \, ; \, b_1, \ldots, b_q \, ; \, z) = \sum_{n=0}^\infty \frac{(a_1)_n \cdots (a_p)_n}{(b_1)_n \cdots (b_q)_n} \frac{z^n}{n!},
\end{align}
where \((a)_n\) denotes the Pochhammer symbol.
We perform a constant translation in \(x\) to rewrite the integral in \eqref{nonmel_00} in this form. We then obtain
\begin{align}
    \label{nonmel_02}
    \int_b^a \sqrt{\frac{a-x}{x-b}} \frac{1}{x^{\nu + m}} \mathrm{d}x = \frac{\pi}{2} \left(\frac{a-b}{b^{\nu + m}}\right) {}_2F_1 \left(\frac{1}{2}, \nu + m ; 2, 1 - \frac{a}{b}\right),
\end{align}
for \(a > b > 0\) and positive integer \(m\). To solve the integral in the first term of \eqref{nonmel_00}, we differentiate the right-hand side of \eqref{nonmel_02} with respect to \(\nu\), yielding
\begingroup
\allowdisplaybreaks
    \begin{align}
        \label{nonmel_03}
        \int_b^a \sqrt{\frac{a-x}{x-b}} \frac{\ln{x}}{x^{\nu + m}} \; \mathrm{d}x = \; \frac{\pi}{2}  \left(\frac{a-b}{b^{\nu + m}}\right) \Biggl[ \; {}_2F_1^{(0,1,0,0)} & \left(\frac{1}{2}, \nu + m ;2, 1 -\frac{a}{b}\right) \ln b \\
        - &  \, _2F_1\left(\frac{1}{2}, \nu + m ;2;1-\frac{a}{b}\right) \; \Biggr]. \nonumber
    \end{align}
\endgroup
Here, \( {}_2F_1^{(0,n,0,0)}(a, b; c; z) \) denotes the \( n \)th partial derivative of the Gauss hypergeometric function \( {}_2F_1(a, b; c; z) \) with respect to the second parameter \( b \). That is,
\begin{align}
{}_2F_1^{(0,n,0,0)}(a, b; c; z) := \frac{\partial^n}{\partial b^n} \, {}_2F_1(a, b; c; z).
\end{align}

Substituting \eqref{nonmel_02} and \eqref{nonmel_03} into \eqref{nonmel_00}, we write
\begin{align}
    \label{nonMel-ans}
    \begin{split}
    \bbint{0}{\infty} \sqrt{\frac{a+x}{b+x}} \frac{\ln{x}}{x^{\nu + m}} \, \mathrm{d}x &= (\pi^2 - \pi i \ln b) \left(\frac{a-b}{b^m}\right) \frac{e^{-\pi i \nu}}{( e^{-2\pi i \nu} -1 ) b^\nu} {}_2F_1 \left(\frac{1}{2}, \nu + m ; 2, 1 - \frac{a}{b}\right) \\
    &\quad + \pi i \left(\frac{a-b}{b^m}\right) \frac{e^{-\pi i \nu}}{( e^{-2\pi i \nu} -1 ) b^\nu} {}_2F_1^{(0,1,0,0)} \left(\frac{1}{2}, \nu + m ; 2, 1 - \frac{a}{b}\right) \\
    &\quad - 2 \pi^2 \left(\frac{a-b}{b^m}\right) \frac{e^{-3\pi i \nu}}{( e^{-2\pi i \nu} -1 )^2 b^\nu} {}_2F_1 \left(\frac{1}{2}, \nu + m ; 2, 1 - \frac{a}{b}\right).
    \end{split}
\end{align}

The last step is to find the regularized limit of equation \eqref{nonMel-ans} as \(\nu \to 0\). Let's denote the regularized limits involved in the first, second, and third terms as follows:
    \begin{align}
        \label{reglim_01}
        \lim^\times_{\nu \to 0} \mathbf{T}_1 (\nu) = \lim^\times_{\nu \to 0} \frac{e^{-\pi i \nu}}{\left( e^{-2\pi i \nu} -1 \right) b^\nu} \, {}_2F_1 \left(\frac{1}{2}, \nu + m ;2, 1 -\frac{a}{b}\right),
    \end{align}
    \begin{align}
        \label{reglim_01-1}
        \lim^\times_{\nu \to 0} \mathbf{T}_2 (\nu)=\lim^\times_{\nu \to 0} \frac{e^{-\pi i \nu}}{\left( e^{-2\pi i \nu} -1 \right) b^\nu} \, {}_2F_1^{(0,1,0,0)} \left(\frac{1}{2}, \nu + m ;2, 1 -\frac{a}{b}\right),
    \end{align}
    \begin{align}
        \label{reglim_01-2}
        \lim^\times_{\nu \to 0} \mathbf{T}_3 (\nu)=\lim^\times_{\nu \to 0} \frac{e^{-3\pi i \nu}}{( e^{-2\pi i \nu} -1 )^2 b^\nu} {}_2F_1 \left(\frac{1}{2}, \nu + m ; 2, 1 - \frac{a}{b}\right).
    \end{align}
We solve equations \eqref{reglim_01} and \eqref{reglim_01-1} using Corollary~\ref{reglim-n1}. Let
    \begin{align}
        \label{f(nu)-g(nu)}
        f(\nu) = \frac{e^{-\pi i \nu}}{ b^\nu} \, {}_2F_1 \left(\frac{1}{2}, \nu + m ;2, 1 -\frac{a}{b}\right)  \quad \text{and} \quad g(\nu) = \left( e^{-2\pi i \nu} -1 \right)
    \end{align}
for equation \eqref{reglim_01}, and
    \begin{align}
        \label{f(nu)-g(nu)-01}
        f(\nu) = \frac{e^{-\pi i \nu}}{ b^\nu} \, {}_2F_1^{(0,1,0,0)} \left(\frac{1}{2}, \nu + m ;2, 1 -\frac{a}{b}\right)  \quad \text{and} \quad g(\nu) = \left( e^{-2\pi i \nu} -1 \right)
    \end{align}
for equation \eqref{reglim_01-1}. We obtain the solutions:
    \begin{align}
        \label{reglim_02}
        \lim^\times_{\nu \to 0} \mathbf{T}_1 (\nu) = & \frac{1}{2\pi i} \, {}_2F_1 \left(\frac{1}{2}, m ;2, 1 -\frac{a}{b}\right)  - \frac{1}{2\pi i} \, {}_2F_1^{(0,1,0,0)} \left(\frac{1}{2}, m ;2, 1 -\frac{a}{b}\right),
    \end{align}
    \begin{align}
        \label{reglim_03}
        \lim^\times_{\nu \to 0} \mathbf{T}_2 (\nu) = \frac{1}{2\pi i} \, {}_2F_1^{(0,1,0,0)} \left(\frac{1}{2}, m ;2, 1 -\frac{a}{b}\right) - \frac{1}{2\pi i} \, {}_2F_1^{(0,2,0,0)} \left(\frac{1}{2}, m ;2, 1 -\frac{a}{b}\right).
    \end{align}

To determine the regularized limit of the third term of equation \eqref{nonMel-ans} using Corollary~\ref{reglim-n2}, let
    \begin{align}
        \label{f(nu)-g(nu)-02}
        f(\nu) = \frac{e^{-3\pi i \nu}}{ b^\nu} \, {}_2F_1 \left(\frac{1}{2}, \nu + m ;2, 1 -\frac{a}{b}\right),  \quad \text{and} \quad g(\nu) = \left( e^{-2\pi i \nu} -1 \right)^2.
    \end{align}
We evaluate the regularized limit of equation \eqref{f(nu)-g(nu)-02} as
\begingroup
\allowdisplaybreaks
    \begin{align}
        \label{reglim_04}
        \lim^\times_{\nu \to 0} \mathbf{T}_3 (\nu) = & -\frac{1}{8 \pi ^2} \, {}_2F_1^{(0,2,0,0)} \left(\frac{1}{2},m,2,1-\frac{a}{b}\right) \\
        & \;\;\; +\frac{1}{4 \pi ^2} \, {}_2F_1^{(0,1,0,0)}\left(\frac{1}{2},m,2,1-\frac{a}{b}\right) (\ln b + \pi i ) \nonumber\\
        & \;\;\;\;\; +\frac{1}{24 \pi ^2} \, _2F_1\left(\frac{1}{2},m;2;1-\frac{a}{b}\right) \left(-3 \ln ^2 b - 6 i \pi  \ln b +\pi ^2\right). \nonumber
    \end{align}
\endgroup

Combining the solutions from equations \eqref{reglim_02}, \eqref{reglim_03}, and \eqref{reglim_04}, we obtain the regularized limit of the original integral in equation \eqref{nonMel-ans}. The final solution is
    \begin{align}
        \label{nonMel-finalans}
        \bbint{0}{\infty} & \sqrt{\frac{a+x}{b+x}}  \frac{\ln{x}}{x^m}\; \mathrm{d}x =  \frac{(-1)^{m+1}}{12}  \left(\frac{a-b}{b^{m }}\right)  \Biggl[ 3 \; {}_2F_1^{(0,2,0,0)}\left(\frac{1}{2},m ,2,1-\frac{a}{b}\right) \\ 
        &  -6 \ln b \; {}_2F_1^{(0,1,0,0)}\left(\frac{1}{2},m ,2,1-\frac{a}{b}\right)+\left(3 \ln^2 b+\pi ^2\right) \, _2F_1\left(\frac{1}{2},m ;2;1-\frac{a}{b}\right)\Biggr]. \nonumber
    \end{align}
This is a good example to show that even if a non-Mellin-type divergent integral does not have a Mellin transform, its finite-part integral still exists.

\subsection{Numerical values of finite-part integral using contour integral representation}
As convergent integrals require numerical methods, finite-part integrals also require them. To compute numerical values of finite-part integrals, we use the contour of integration shown in Figure~\ref{fig:3-001}.

\noindent \textit{Example.} Consider finding the numerical value of the finite-part integral of the function
\begin{align}
    \label{cont-int-00}
    \bbint{0}{\infty} \frac{J_0^2(x)}{x \, \Gamma(1+x)} \mathrm{d}x.
\end{align}
Using equation~\eqref{theorem3.2-000}, we write the integral in equation~\eqref{cont-int-00} as
\begin{align}
    \label{J_02}
    \bbint{0}{\infty} \frac{J_0^2(x)}{x \, \Gamma(1+x)} \mathrm{d}x = \frac{1}{2 \pi i} \int_C \frac{J_0^2(z)}{z \, \Gamma(1+z)} \ln{z} \, \mathrm{d}z - \frac{1}{2} \int_C \frac{J_0^2(z)}{z \, \Gamma(1+z)} \mathrm{d}z.
\end{align}
We evaluate the first contour integral by deforming the contour \(C\) into a keyhole shape, as in previous examples. This gives
\begin{align}
    \label{Bess-Gam-01}
    \begin{split}
    \int_C \frac{J_0^2(z)}{z \, \Gamma(1+z)} \ln{z} \, \mathrm{d}z &= \int_\infty^\epsilon \frac{J_0^2(x)}{x \, \Gamma(1+x)} \ln{x} \, \mathrm{d}x + \int_\epsilon^\infty \frac{J_0^2(x)}{x \, \Gamma(1+x)} \ln\bigl(e^{2\pi i} x\bigr) \, \mathrm{d}x \\
    &\hspace{5cm} + \int_{C_\epsilon} \frac{J_0^2(z)}{z \, \Gamma(1+z)} \ln{z} \, \mathrm{d}z.
    \end{split}
\end{align}
Simplifying equation \eqref{Bess-Gam-01}, we obtain
\begin{align}
    \label{Bess-Gam-02}
    \int_C \frac{J_0^2(z)}{z \, \Gamma(1+z)} \ln{z} \, \mathrm{d}z = (2\pi i) \int_\epsilon^\infty \frac{J_0^2(x)}{x \, \Gamma(1+x)} \mathrm{d}x + \int_{C_\epsilon} \frac{J_0^2(z)}{z \, \Gamma(1+z)} \ln{z} \, \mathrm{d}z.
\end{align}
Setting \(\epsilon = 1\) and expressing \(z\) in exponential form, \(z = \epsilon e^{i\theta}\), we have
\begin{align}
    \label{Bess-Gam-03}
    \int_C \frac{J_0^2(z)}{z \, \Gamma(1+z)} \ln{z} \, \mathrm{d}z = (2\pi i) \int_1^\infty \frac{J_0^2(x)}{x \, \Gamma(1+x)} \mathrm{d}x - \int_0^{2\pi} \frac{\theta \, J_0^2(e^{i\theta})}{\Gamma(1+e^{i\theta})} \mathrm{d}\theta.
\end{align}
The second contour integral in equation~\eqref{J_02} evaluates to
\begin{align}
    \label{Bess-Gam-04}
    \int_C \frac{J_0^2(z)}{z \, \Gamma(1+z)} \mathrm{d}z = i \int_0^{2\pi} \frac{J_0^2(e^{i\theta})}{\Gamma(1+e^{i\theta})} \mathrm{d}\theta.
\end{align}

Substituting equations~\eqref{Bess-Gam-03} and \eqref{Bess-Gam-04} into equation~\eqref{J_02} yields
\begin{align}
    \label{J0_04}
    \begin{split}
    \bbint{0}{\infty} \frac{J_0^2(x)}{x \, \Gamma(1+x)} \mathrm{d}x &= \int_1^\infty \frac{J_0^2(x)}{x \, \Gamma(1+x)} \mathrm{d}x - \frac{1}{2\pi i} \int_0^{2\pi} \frac{\theta \, J_0^2(e^{i\theta})}{\Gamma(1+e^{i\theta})} \mathrm{d}\theta \\
    &\hspace{5cm} - \frac{i}{2} \int_0^{2\pi} \frac{J_0^2(e^{i\theta})}{\Gamma(1+e^{i\theta})} \mathrm{d}\theta.
    \end{split}
\end{align}
We can evaluate all integrals in equation~\eqref{J0_04} by any numerical method. Using Mathematica, we obtain the final numerical value:
\begin{align}
    \bbint{0}{\infty} \frac{J_0^2(x)}{x \, \Gamma(1+x)} \mathrm{d}x = 0.2129210647.
\end{align}
Although finding a closed-form analytical solution to this problem is difficult, solving the numerical values of the finite-part integral using the contour integral representation works even for more complex functions.


\section{Finite-Part Integration of the Stieltjes Transform with Logarithmic Singularity}
\label{Stieljes-Transform-Section}

We consider the generalized Stieltjes transform with logarithmic singularities given by
\begin{equation}
    \label{sec4.2-003}
    \int_0^a \frac{k(t) \ln^n{t}}{t^\nu (\omega^2 + t^2)} \; \mathrm{d}t,
\end{equation}
for \(0 \leq \mathrm{Re} (\nu) < 1\), \(-\pi < \mathrm{Arg} (\omega) \leq \pi \) with \(|\mathrm{Arg} (\omega)| \neq \pi/2 \), and \(0 < a \leq \infty\). The next two sections of this chapter examine distinct cases of the integral in equation~\eqref{sec4.2-003}.

\subsection{Stieltjes transform in the case \texorpdfstring{\(\nu \neq 0\)}{nu neq 0}}
    \begin{theorem}
        \label{theorem4.1}
        Let \(k(t)\) satisfy the following properties: (1) \(k(z)\) is its complex extension and it is analytic in the interval \([0,a)\); (2) when evaluated at \(t=0\), \(k(0) \neq 0\), then
        \begingroup
        \allowdisplaybreaks
            \begin{align}
                \label{th4.1-001}
                \begin{split}
                & \int_0^a \frac{k(t) \ln^n{t}}{t^\nu (\omega^2 + t^2)} \mathrm{d}t =  \sum_{k=0}^\infty (-1)^k \omega^{2k}  \bbint{0}{a} \frac{k(t) \ln^n{t}}{t^{\nu +2k+2}} \mathrm{d}t  \\
                & \hspace{1cm}+ \biggl[ \; \frac{k(i \omega) + k(-i \omega)}{2} \; \biggr] \Delta_{n_1} (\nu, \omega)- i \, \biggl[ \; \frac{k(i \omega) - k(-i \omega)}{2} \; \biggr] \Delta_{n_2} (\nu, \omega),
                \end{split}
            \end{align}
        where
            \begin{align}
                \label{extra-term}
                \Delta_{n_1} (\nu, \omega) = \frac{\pi}{2\omega^{\nu +1}} \sum_{j=0}^n (-1)^j \binom{n}{j} (\mathrm{Log} \; \omega)^{n-j} \frac{\mathrm{d}^j}{\mathrm{d}\nu^j}\sec \left(\frac{\pi \nu}{2} \right),
            \end{align}
            \begin{align}
                 & \frac{\mathrm{d}^j}{\mathrm{d}\nu^j}\sec \left(\frac{\pi \nu}{2} \right) = (-1)^{[(j+1)/2]}\left(\frac{\pi}{2}\right)^j \sec \left( \frac{\pi \nu}{2} \right) \sum_{l=0}^j \, \frac{1}{2^l} \, \sum_{m=0}^{[(l-\gamma)/2]} (-1)^m \binom{l}{2m+\gamma} \\
                 & \hspace{2cm} \times \tan^{2m+\gamma} \left(\frac{\pi \nu}{2}\right) \sum_{p=0}^l (-1)^p \binom{l}{p}(2p+1)^j, \;\;\; \gamma = \frac{1-(-1)^j}{2}, \nonumber
            \end{align}
        and
            \begin{align}
                \label{extra-term-sec}
                \Delta_{n_2} (\nu, \omega) = \frac{\pi}{2\omega^{\nu +1}} \sum_{j=0}^n (-1)^j \binom{n}{j} (\mathrm{Log} \; \omega)^{n-j} \frac{\mathrm{d}^j}{\mathrm{d}\nu^j}\csc \left(\frac{\pi \nu}{2} \right),
            \end{align}
            \begin{align}
                \begin{split}
                 & \frac{d^j}{d\nu^j} \csc \left( \frac{\pi \nu}{2} \right) = (-1)^{[j/2]} \left( \frac{\pi}{2} \right)^j \csc \left(\frac{\pi \nu}{2} \right) \sum_{l=1}^j \, \frac{1}{2^l} \, \sum_{m=0}^l (-1)^m \binom{l}{m} (2m +1)^j \\
                 & \hspace{2cm} \times \sum_{p=0}^{[(l-\gamma)/2]} (-1)^p \binom{l}{2p+\gamma} \cot^{2p+\gamma}\left(\frac{\pi \nu}{2}\right), \;\;\; \gamma = \frac{1-(-1)^j}{2},
                 \end{split}
            \end{align}
        \endgroup
        for \(|\omega| < \mathrm{min}(a, \rho_o)\), \(-\pi < \mathrm{Arg} (\omega) \leq \pi \) with \(|\mathrm{Arg} (\omega)| \neq \pi/2 \), \(0<\mathrm{Re} (\nu) <1\), and for all positive integer n, where \(\omega^{-\nu}\) takes its principal value and \(\rho_o\) is defined as the distance of the nearest singularity of \(k(z)\) from the origin. Equation \eqref{th4.1-001} still holds for \(a = \infty\), if the Stieltjes integral exists as \(a \to \infty\) for all \(|\omega| < \infty\) when \(k(z)\) is entire or for all \(|\omega| < \rho_o\) when \(k(z)\) has at least one singularity. Lastly, it holds that
        \begin{align}
            \label{th4.1-001-asym}
            \int_0^a \frac{k(t) \ln^n{t}}{t^\nu (\omega^2 + t^2)} \mathrm{d}t \; \sim \;  k(0) \; \Delta_{n_1} (\nu, \omega), \;\;\;\; \omega \to 0.
        \end{align}
    \end{theorem}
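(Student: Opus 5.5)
The plan is to reduce to the case $n=0$ and then differentiate in $\nu$, exactly as equation \eqref{eq3.3} was used in Section~\ref{contour-integral-fpi}. The starting point is the $n=0$ identity, which we establish first:
\begin{align*}
\int_0^a \frac{k(t)}{t^\nu(\omega^2+t^2)}\,\mathrm{d}t = \sum_{k=0}^\infty (-1)^k\omega^{2k}\bbint{0}{a}\frac{k(t)}{t^{\nu+2k+2}}\,\mathrm{d}t + \frac{k(i\omega)+k(-i\omega)}{2}\,\frac{\pi\sec(\pi\nu/2)}{2\omega^{\nu+1}} - i\,\frac{k(i\omega)-k(-i\omega)}{2}\,\frac{\pi\csc(\pi\nu/2)}{2\omega^{\nu+1}}.
\end{align*}

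To prove this, we use the representation \eqref{eq3.1} with $\lambda=\nu+2k+2$, which is non-integer since $0<\mathrm{Re}(\nu)<1$. We write $\int_C k(z)z^{-\nu}(\omega^2+z^2)^{-1}\mathrm{d}z$ with $C$ the contour of Figure~\ref{fig:3-001}, chosen large enough to enclose $z=\pm i\omega$; this is possible because $|\omega|<\min(a,\rho_o)$. Collapsing $C$ onto the cut gives $(e^{-2\pi i\nu}-1)$ times the Stieltjes integral, since the integral over $C_\epsilon$ vanishes for $\mathrm{Re}(\nu)<1$.

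Alternatively, we choose the contour so that $|z|>|\omega|$ on it. This is done by shrinking the contour so that it still encloses the poles, and subtracting the residues at $z=\pm i\omega$. With $|z|>|\omega|$ we may expand $(\omega^2+z^2)^{-1}=\sum_k(-1)^k\omega^{2k}z^{-2k-2}$ uniformly and integrate term by term. Each term is, by \eqref{eq3.1}, $(e^{-2\pi i\nu}-1)$ times the finite-part integral, because $e^{-2\pi i(\nu+2k+2)}=e^{-2\pi i\nu}$.

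The residues, computed with the branch $0\le\arg z<2\pi$, are $k(\pm i\omega)(\pm i\omega)^{-\nu}/(\pm 2i\omega)$. Dividing by $e^{-2\pi i\nu}-1$ and simplifying via $e^{-i\pi\nu/2}$, $e^{-3i\pi\nu/2}$ factors into $\sec$ and $\csc$ of $\pi\nu/2$ gives the stated $n=0$ terms. Here the principal value $\omega^{-\nu}$ and the hypothesis $|\mathrm{Arg}\,\omega|\neq\pi/2$ must be tracked, and this is the case-bookkeeping step I expect to be the main obstacle. I would handle it by first taking $\omega>0$ and then extending by analytic continuation in $\omega$ within each half-plane. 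The case $a=\infty$ follows by the same argument once the tails converge.

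With the $n=0$ identity in hand, we apply $(-1)^n\,\mathrm{d}^n/\mathrm{d}\nu^n$ to both sides. On the left, differentiation under the integral sign produces $\ln^n t$. In the series, \eqref{eq3.3} turns each finite-part integral into the one with $\ln^n t$; term-by-term differentiation is justified by local uniform convergence in $\nu$, since the series is analytic in $\nu$ and Weierstrass' theorem applies. In the residue terms only $\omega^{-\nu-1}\sec(\pi\nu/2)$ and $\omega^{-\nu-1}\csc(\pi\nu/2)$ depend on $\nu$. Leibniz' rule with $\partial_\nu^{\,n-j}\omega^{-\nu}=(-\mathrm{Log}\,\omega)^{n-j}\omega^{-\nu}$ then gives exactly $\Delta_{n_1}$ and $\Delta_{n_2}$, with the sign $(-1)^n(-1)^{n-j}=(-1)^j$.

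The explicit formulas for $\mathrm{d}^j\sec$ and $\mathrm{d}^j\csc$ are standard higher-derivative identities. I would verify them by writing $\sec x=2e^{ix}/(1+e^{2ix})$, expanding geometrically, and differentiating. Alternatively, induction on $j$ using $\sec'=\sec\tan$ works.

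Finally, for the asymptotic statement \eqref{th4.1-001-asym}, note that as $\omega\to0$ the series is $O(1)$ (with logarithmic factors at most), while the $\Delta$ terms are of order $\omega^{-\nu-1}\ln^n\omega$. We also have $k(i\omega)+k(-i\omega)\to2k(0)\neq0$ and $k(i\omega)-k(-i\omega)=O(\omega)$, so the $\Delta_{n_2}$ term is only $O(\omega^{-\nu}\ln^n\omega)$. Therefore $k(0)\Delta_{n_1}(\nu,\omega)$ dominates.
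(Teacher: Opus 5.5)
Your argument is correct for $\mathrm{Re}\,\omega>0$ and takes a genuinely different, shorter route than the paper.

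\textbf{Comparison with the paper.} The paper never reduces to $n=0$. It collapses the contour for $\int_C k(z)\ln^n z\,z^{-\nu}(\omega^2+z^2)^{-1}\,\mathrm{d}z$, which generates the Stieltjes transforms with every lower power $\ln^j t$. It eliminates these recursively, reproducing the Stirling-number coefficients $\beta_j(\nu)$ of Theorem~\ref{theorem_3.1}. It then identifies the resulting contour sums as finite-part integrals through Theorem~\ref{theorem_3.1}, recognizes the residue sums as $\nu$-derivatives through Theorem~\ref{nth-der}, and proves convergence of the series separately using the bounds from \cite{galapon2023}.

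You prove only the $n=0$ identity and apply $(-1)^n\mathrm{d}^n/\mathrm{d}\nu^n$. Theorems~\ref{theorem_3.1} and~\ref{nth-der} were themselves obtained from \eqref{eq3.3}; they just say that the logarithmic coefficients are $\nu$-derivatives of the $n=0$ ones. So your route bypasses all of the combinatorics. The only new obligation is termwise differentiation, which your Weierstrass argument covers. Even more directly, the series equals $(e^{-2\pi i\nu}-1)^{-1}\int_C k(z)z^{-\nu}(\omega^2+z^2)^{-1}\,\mathrm{d}z$ on a contour with $|z|>|\omega|$, which is visibly analytic in $\nu$. The same uniform convergence gives convergence for $|\omega|<\min(a,\rho_o)$ without the paper's bounds. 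What the paper's route buys is the explicit contour form \eqref{stielt-trans-t1} of the logarithmic transform, and a recursion it reuses for $\nu=0$.

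One wording fix: a contour enclosing $\pm i\omega$ cannot be collapsed onto the cut without crossing the poles, so the residues enter at the collapse itself. Your final $n=0$ identity is nonetheless right.

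\textbf{The step that fails: extension in $\omega$.} Continuation from $\omega>0$ reaches only the right half-disc, because the left half-disc is a separate component of the admissible set. There the stated formula is in fact false. The left side depends only on $\omega^2$, whereas the principal $\omega^{-\nu-1}$ is not even. Take $k\equiv 1$, $a=\infty$, $n=0$, $\omega=-1$:
\begin{itemize}
\item the left side is $\tfrac{\pi}{2}\sec(\pi\nu/2)$;
\item all finite-part integrals of pure powers over $(0,\infty)$ vanish;
\item the right side is $-\tfrac{\pi}{2}e^{-i\pi\nu}\sec(\pi\nu/2)$.
\end{itemize}

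In your residue computation this shows up as follows. For $\omega>0$ you use $\arg(i\omega)=\mathrm{Arg}\,\omega+\pi/2$ and $\arg(-i\omega)=\mathrm{Arg}\,\omega+3\pi/2$. For $\mathrm{Re}\,\omega<0$ one of the poles leaves this pattern; e.g.\ $\arg(-i\omega)=\mathrm{Arg}\,\omega-\pi/2$ when $\pi/2<\mathrm{Arg}\,\omega\le\pi$.

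So you should prove the result for $\mathrm{Re}\,\omega>0$ only, and handle $\mathrm{Re}\,\omega<0$ by replacing $\omega$ with $-\omega$. The paper's proof makes the same unjustified continuation claim, so this is a defect of the stated range rather than of your method.
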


\begin{proof}
For now, let's assume that \( \omega > 0 \), and subsequently extend the result to more general values of \( \omega \) through analytic continuation.  We first consider the case \(a < \infty\). The Stieltjes transform can be evaluated by considering the contour integral
\begin{equation}
    \label{cont-int-logn}
    \int_C \frac{k(z) \ln^n z}{z^\nu (\omega^2 + z^2)} \, \mathrm{d}z,
\end{equation}
where the contour \(C\) is illustrated in Figure~\ref{fig:3-003}. The contour \( C \) must be chosen to satisfy the following criteria: (i) The singularities of the kernel at \( z = \pm i \omega \) should lie to the left of the contour when it is traversed in the same direction; (ii) All singularities of \( k(z) \) must remain to the right of the contour when traversed in the same direction. The first condition ensures the validity of term-by-term integration, while the second guarantees that the resulting contour integrals can be interpreted as finite-part integrals \cite{galapon2023}.

We deform the contour \(C\) into \(C'\) in equation~\eqref{cont-int-logn}, accounting for the poles introduced by the term \((\omega^2 + z^2)^{-1}\). This deformation yields
\begingroup
\allowdisplaybreaks
\begin{align}
    \label{cont-int-logn-01}
    &\int_C \frac{k(z) \ln^n z}{z^\nu (\omega^2 + z^2)} \, \mathrm{d}z 
    = \left( e^{-2\pi i \nu} - 1 \right) \int_0^a \frac{k(t) \ln^n t}{t^\nu (\omega^2 + t^2)} \, \mathrm{d}t \\ 
    &\hspace{1cm} + \; e^{-2\pi i \nu} \sum_{j=0}^{n-1} \binom{n}{j} \left( 2\pi i \right)^{n-j} \int_0^a \frac{k(t) \ln^j t}{t^\nu (\omega^2 + t^2)} \, \mathrm{d}t \nonumber \\
    &\hspace{1cm} + \; k(i\omega) \left[ \frac{\pi  e^{-\pi i \nu /2} }{\omega^{\nu + 1}}  \left( \ln \omega + \frac{\pi i}{2} \right)^n \right] 
    - k(-i\omega) \left[ \frac{\pi e^{-3\pi i \nu /2} }{\omega^{\nu + 1}} \left( \ln \omega + \frac{3 \pi i}{2} \right)^n \right]. \nonumber
\end{align}
\endgroup
    \begin{figure}[t]
        \centering
        \includegraphics[width=10cm]{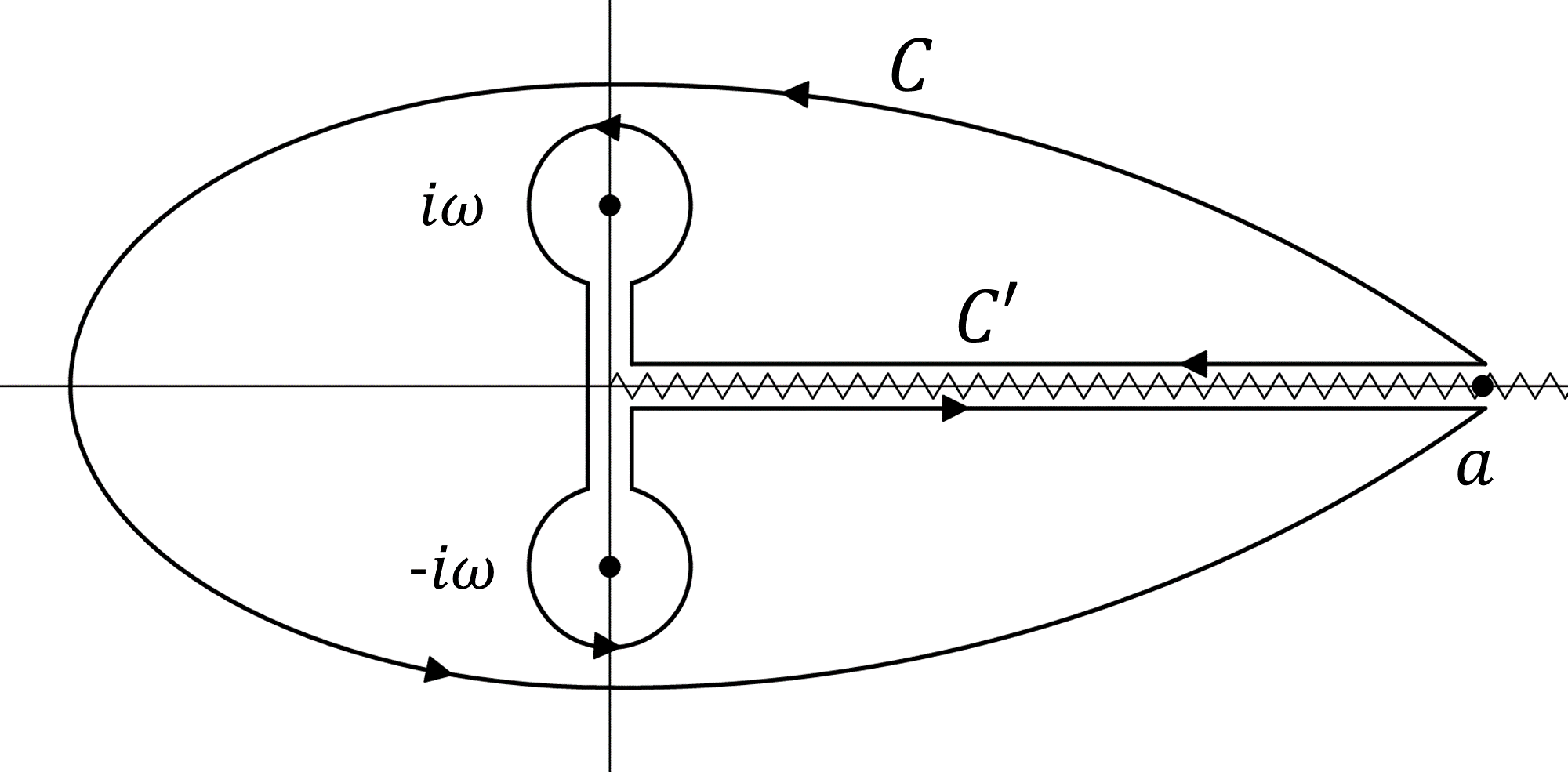}
        \caption{Contour C deformed into contour C' considering the poles coming from \((\omega^2 +z^2)^{-1}\). The branch cut for \(z^\lambda\) is along the positive real axis. }
        \label{fig:3-003}
    \end{figure}

Using equation~\eqref{cont-int-logn-01} and applying the series expansion
\begin{align}
    \label{ser-expand}
    \frac{1}{\omega^2 + z^2} = \sum_{k=0}^\infty (-1)^k \frac{\omega^{2k}}{z^{2k+2}}, \qquad \left| \frac{\omega^2}{z^2}\right| < 1,
\end{align}
we can express the associated Stieltjes transform in the form
\begingroup
\allowdisplaybreaks
\begin{align}
    \label{stieltjes-logn-01}
    \begin{split}
    \int_0^a \frac{k(t) \ln^n t}{t^\nu (\omega^2 + t^2)} \mathrm{d}t &= \sum_{k=0}^\infty (-1)^k \omega^{2k} \frac{1}{e^{-2\pi i \nu} - 1} \int_C \frac{k(z) \ln^n z}{z^{\nu + 2k + 2}} \mathrm{d}z \\
    &\hspace{0.5cm} - \sum_{j=0}^{n-1} \binom{n}{j} (2\pi i)^{n-j} \frac{e^{-2\pi i \nu}}{e^{-2\pi i \nu} - 1} \int_0^a \frac{k(t) \ln^j t}{t^\nu (\omega^2 + t^2)} \mathrm{d}t \\
    &\hspace{0.8cm} - k(i\omega) \left[ \frac{\pi e^{-\pi i \nu /2}}{\omega^{\nu + 1}(e^{-2\pi i \nu} - 1)} \left( \ln \omega + \frac{\pi i}{2} \right)^n \right] \\
    &\hspace{1cm} + k(-i\omega) \left[ \frac{\pi e^{-3\pi i \nu /2}}{\omega^{\nu + 1}(e^{-2\pi i \nu} - 1)} \left( \ln \omega + \frac{3\pi i}{2} \right)^n \right].
    \end{split}
\end{align}
\endgroup
Under the condition \( \omega < |z| \), the series expansion in equation~\eqref{ser-expand} converges uniformly along the contour \( C \).  
This uniform convergence justifies the interchange of summation and integration in equation~\eqref{stieltjes-logn-01}, allowing for term-by-term integration.

To solve equation~\eqref{stieltjes-logn-01} recursively, we begin by isolating the final term in the second summation, corresponding to \( j = n - 1 \). We then express the isolated integral using equation~\eqref{stieltjes-logn-01} itself, thereby introducing additional terms. This step produces two new summations, both with upper limits reduced to \( j = n - 2 \). By repeating this process iteratively, we eventually reduce all upper limits of summation to zero. Each resulting term contains coefficients that follow the same combinatorial structure observed in the analysis of \( G^n_l(z) \) in Theorem~\ref{contour-noninteger}. The Stieltjes transform then admits the representation
\begingroup
\allowdisplaybreaks
\begin{align}
\label{stielt-trans-t1}
    \int_0^a\frac{k(t) \ln^n t}{t^\nu (\omega^2 + t^2)} \, \mathrm{d}t &= \sum_{k=0}^\infty (-1)^k \omega^{2k} 
    \left[ \sum_{j=0}^n \binom{n}{j} (2 \pi i)^{n-j} \beta_j(\nu) \int_C \frac{k(z) \ln^j z}{z^{\nu + 2k + 2}} \, \mathrm{d}z \right] \\
    &\hspace{0.7cm} - k(i\omega) \left[ \frac{\pi e^{- \pi i \nu /2}}{\omega^{\nu +1}} \sum_{j=0}^n \binom{n}{j} (2 \pi i)^{n-j} \beta_j(\nu) \left( \ln \omega + \frac{\pi i}{2} \right)^j \right] \nonumber \\
    &\hspace{0.7cm} + k(-i\omega) \left[ \frac{\pi e^{-3 \pi i \nu /2}}{\omega^{\nu +1}} \sum_{j=0}^n \binom{n}{j} (2 \pi i)^{n-j} \beta_j(\nu) \left( \ln \omega + \frac{3\pi i}{2} \right)^j \right], \nonumber
\end{align}
\endgroup
where
\begin{align}
    \beta_j(\nu) = \sum_{l=0}^{n-j} (-1)^l \, l! \, \mathbf{S}^{n-j}_l \frac{(e^{-2\pi i \nu})^l }{(e^{-2\pi i \nu } - 1)^{l +1}}.
\end{align}

Equation~\eqref{stielt-trans-t1} consists of three terms. Using the definition of the finite-part integral in equation~\eqref{theorem3.1}, the inner sum containing all contour integrals in the first term corresponds to the finite part of a divergent integral with a logarithmic singularity, valid for non-integer values of \(\lambda\). The remaining two terms represent residues, which take the form
\begingroup
\allowdisplaybreaks
\begin{align}
    & - k(i\omega) \; \frac{(-1)^n \pi }{\omega}\frac{\mathrm{d}^n}{\mathrm{d}\nu ^n} \left[ \frac{(\omega e^{\pi i/2})^{-\nu}}{e^{-2 \pi i \nu}-1} \right] 
    = - k(i\omega) \; \frac{(-1)^n \pi i }{2 \omega} \frac{\mathrm{d}^n}{\mathrm{d}\nu ^n} \left[ \omega^{-\nu} \csc(\pi \nu) \, e^{\pi i \nu/2} \right] \\
    & \hspace{2cm} = k(i\omega) \; \frac{(-1)^n \pi  }{4 \omega} \frac{\mathrm{d}^n}{\mathrm{d}\nu ^n} \left[ \omega^{-\nu} \sec\left( \frac{\pi \nu}{2} \right) - i \, \omega^{-\nu} \csc\left( \frac{\pi \nu}{2} \right) \right], \nonumber \\
    & k(-i \omega) \; \frac{(-1)^n \pi }{\omega} \frac{\mathrm{d}^n}{\mathrm{d}\nu ^n} \left[ \frac{(\omega e^{3\pi i/2})^{-\nu}}{e^{-2 \pi i \nu}-1} \right] 
    = k(-i \omega) \; \frac{(-1)^n \pi i }{2 \omega} \frac{\mathrm{d}^n}{\mathrm{d}\nu ^n} \left[ \omega^{-\nu} \csc(\pi \nu) \, e^{-\pi i \nu/2} \right] \\
    & \hspace{2cm} = k(-i \omega) \; \frac{(-1)^n \pi  }{4 \omega} \frac{\mathrm{d}^n}{\mathrm{d}\nu ^n} \left[ \omega^{-\nu} \sec\left( \frac{\pi \nu}{2} \right) + i \, \omega^{-\nu} \csc\left( \frac{\pi \nu}{2} \right) \right], \nonumber
\end{align}
\endgroup
as derived from Theorem~\ref{nth-der}. Grouping similar terms prior to differentiation and applying the Leibniz rule leads to equations~\eqref{extra-term} and~\eqref{extra-term-sec}, where the \(j^\text{th}\) derivatives of \(\sec(\pi \nu/2)\) and \(\csc(\pi \nu/2)\) are taken from~\cite[p. 8, 1.1.5.5, 1.1.5.9]{Brychkov}.

The natural logarithm \(\ln \omega\) is replaced by the complex principal value logarithm \(\mathrm{Log} \; \omega\) in order to analytically continue the function from the positive real axis into the complex plane. When \(\omega\) is complex, the left-hand side of equation~\eqref{stielt-trans-t1} remains analytic provided that \(-\pi < \mathrm{Arg} (\omega) \leq \pi \) with \(|\mathrm{Arg} (\omega)| \neq \pi/2 \).

Establishing the convergence of the series in equation~\eqref{th4.1-001} is essential to prove this theorem. The case when \( a < \infty \) has two possible scenarios for the finite value of \( a \), namely \( a < \rho_o \) and \( a > \rho_o \). We begin with the case \( a > \rho_o \). Recall that the parameter \( \epsilon \) can be taken arbitrarily small, provided that \( \epsilon < \rho_o \) holds. As shown in~\cite{galapon2023}, the following inequality holds:
\begin{align}
    \label{th4.1-002}
    \left| \bbint{0}{a} \frac{k(t) \ln^n{t}}{t^{r+\nu+1}} \, \mathrm{d}t \right| 
    \leq \frac{M_\nu(a,\epsilon)}{\epsilon^{r+\nu}},
\end{align}
where \( r \) is a positive integer and \( M_\nu(a,\epsilon) \) is defined by
\begin{align}
    M_\nu(a,\epsilon) = 
    \int_\epsilon^a \frac{k(t) \ln^n{t}}{t^{\nu + k}} \, \mathrm{d}t 
    + n! \sum_{m=0}^n \frac{|\ln \epsilon|^m \, M(\nu)^{n - m + 1}}{m!},
\end{align}
with the auxiliary function \( M(\nu) \) given by
\begin{align}
    M(\nu) = 
    \begin{cases}
        \displaystyle \frac{2\sinh{(\pi \, \mathrm{Im}(\nu))}}{
        \mathrm{Im}(\nu) \sqrt{\sin^2(\pi \, \mathrm{Re}(\nu)) 
        + \sinh^2{(\pi \, \mathrm{Im}(\nu))}}}, & \text{if } \mathrm{Im}(\nu) \neq 0, \\[1em]
        \displaystyle \frac{2\pi}{|\sin{(\pi \nu)}|}, & \text{if } \mathrm{Im}(\nu) = 0.
    \end{cases}
\end{align}

Given the bound in equation~\eqref{th4.1-002}, the absolute convergence of the infinite series in equation~\eqref{th4.1-001} follows from the inequality
\begin{align}
    \label{th4.1-003}
    \left| \sum_{k=0}^\infty (-1)^k \omega^{2k}  
    \bbint{0}{a} \frac{k(t) \ln^n{t}}{t^{\nu + 2k + 2}} \, \mathrm{d}t \right| 
    \leq \sum_{k=0}^\infty |\omega|^{2k} 
    \left| \bbint{0}{a} \frac{k(t) \ln^n{t}}{t^{\nu + 2k + 2}} \, \mathrm{d}t \right|.
\end{align}
Substituting the bound from equation~\eqref{th4.1-002} into the right-hand side of equation~\eqref{th4.1-003}, we obtain
\begin{align}
    \label{th4.1-004}
    \left| \sum_{k=0}^\infty (-1)^k \omega^{2k}  
    \bbint{0}{a} \frac{k(t) \ln^n{t}}{t^{\nu + 2k + 2}} \, \mathrm{d}t \right| 
    \leq \frac{M_\nu(a,\epsilon)}{\epsilon^{\nu + 1}} 
    \sum_{k=0}^\infty \left| \frac{\omega}{\epsilon} \right|^{2k}.
\end{align}
The convergence of the infinite series on the left-hand side is therefore guaranteed by the convergence of the geometric series on the right-hand side. This series converges provided that \( |\omega| < \epsilon < \rho_o \), or equivalently \( |\omega| < \rho_o < a \), given that \( \rho_o < a \).

The function \(k(t)\) remains analytic when the upper limit of integration satisfies \(a < \rho_0\), as no singularity lies within the contour. This condition permits the use of the power series expansion of \(k(t)\) about \(t = 0\), allowing for term-by-term integration. To establish the absolute convergence of the infinite series in equation~\eqref{th4.1-001}, we examine the bound
\begin{align}
    \label{th4.1-005}
    \left| \bbint{0}{a} \frac{k(t) \ln^n{t}}{t^{\nu + 2k + 2}} \, \mathrm{d}t \right|
    \leq \left| \int_\epsilon^a \frac{k(t) \ln^n{t}}{t^{\nu + 2k + 2}} \, \mathrm{d}t \right| 
    + n! \sum_{m=0}^n \frac{|\ln \epsilon|^m}{m!} 
    \sum_{l=0}^\infty |c_l| \frac{\epsilon^{l - \nu - 2k - 1}}{|l - \nu - 2k - 1|^{n - m + 1}}.
\end{align}
The first term on the right-hand side of equation~\eqref{th4.1-005} provides the dominant contribution. By expanding \(k(t)\) as \(k(t) = \sum_{l=0}^\infty c_l t^l\), we obtain the bound
\begin{align}
    \label{th4.1-006}
    \left| \int_\epsilon^a \frac{k(t) \ln^n{t}}{t^{\nu + 2k + 2}} \, \mathrm{d}t \right|
    \leq \sum_{l = 0}^{2k + 2} |c_l| \int_\epsilon^a \left| \frac{\ln^n{t}}{t^{\nu + 2k + 2 - l}} \right| \, \mathrm{d}t
    + \sum_{l = 2k + 3}^{\infty} |c_l| \int_\epsilon^a \left| t^{l - \nu - 2k - 2} \ln^n{t} \right| \, \mathrm{d}t.
\end{align}
As \(l\) increases, the exponent of \(t\) in the second sum becomes increasingly positive, causing this term to dominate. We estimate it using the inequality
\begin{align}
    \label{th4.1-007}
    \int_\epsilon^a \left| t^{l - \nu - 2k - 2} \ln^n{t} \right| \, \mathrm{d}t 
    \leq \frac{1}{a^{2k + \nu}} \int_\epsilon^a \left| t^{l - 2} \ln^n{t} \right| \, \mathrm{d}t.
\end{align}
By applying the same bounding strategy as in the earlier case and using equation~\eqref{th4.1-007}, we conclude that the bound on the infinite series in equation~\eqref{th4.1-001} is proportional to the geometric series \( \sum_{k=0}^\infty \left| \omega / a \right|^{2k} \). Therefore, the series converges absolutely under the condition \( |\omega| < \min(\rho_0, a) \).

Now consider the case when \(a = \infty\). Assuming that the generalized Stieltjes transform in equation~\eqref{sec4.2-003} exists, the absolute convergence of the infinite series in equation~\eqref{th4.1-001} must yield a finite value. Specifically, it must satisfy \(M_\nu(a, \epsilon) < M_\nu(\infty, \epsilon) < \infty\), and the sum involving \(M_\nu(\infty, \epsilon)\) must converge. Hence, equation~\eqref{th4.1-001} remains valid in the limit as \(a \to \infty\) and can therefore be employed in addressing the problem above.

The asymptotic relation given in equation~\eqref{th4.1-001-asym} becomes evident upon rewriting the result in equation~\eqref{th4.1-001} as
\begin{align}
    \label{asymp}
    \int_0^a \frac{k(t) \ln^n t}{t^\nu (\omega^2 + t^2)} \, \mathrm{d}t 
    = \left( \bbint{0}{a} \frac{k(t) \ln^n t}{t^{\nu + 2}} \, \mathrm{d}t + O(\omega) \right) 
    + \left( k(0) + O(i\omega) \right) \Delta_{n_1}(\nu, \omega),
\end{align}
as \(\omega \to 0\). From equation~\eqref{extra-term}, it follows that \(\Delta_{n_1}(\nu, \omega) = O(\omega^{-1-\nu} \, \mathrm{Log}^n \omega)\) as \(\omega \to 0\). This implies that the second term dominates both the first and third terms in equation~\eqref{asymp}, thereby establishing the asymptotic form stated in equation~\eqref{th4.1-001-asym}. The structure of the proof presented here follows the same approach as that given in~\cite{galapon2023}.
\end{proof}

\subsection{Stieltjes transform in the case \texorpdfstring{\(\nu = 0\)}{nu = 0}}
    \begin{theorem}
        \label{theorem4.2}
        Let \(k(t)\) satisfy the following properties: (1) \(k(z)\) is its complex extension and it is analytic in the interval \([0,a)\); (2) when evaluated at \(t=0\), \(k(0) \neq 0\), then
            \begin{align}
                \label{th4.2-001}
                & \int_0^a \frac{k(t) \ln^n{t}}{ (\omega^2 + t^2)} \mathrm{d}t =  \sum_{k=0}^\infty (-1)^k \omega^{2k}  \bbint{0}{a} \frac{k(t) \ln^n{t}}{t^{2k +2}} \mathrm{d}t \\
                & \hspace{1cm} + \biggl[ \; \frac{k(i \omega) + k(-i \omega)}{2} \; \biggr] \Delta_{n_1} (\omega)- i \, \biggl[ \; \frac{k(i \omega) - k(-i \omega)}{2} \; \biggr] \Delta_{n_2} (\omega), \nonumber
            \end{align}
where
            \begin{align}
                \label{reglim-sec}
                \Delta_{n_1} (\omega) = \frac{n!}{\omega} \sum_{j=0}^{\left\lfloor n/2 \right\rfloor} \frac{(-1)^j}{(n-2j)! \, (2j)!} \left( \frac{\pi}{2}\right)^{2j+1} E_{2j} \, (\mathrm{Log} \; \omega)^{n-2j} ,
            \end{align}
            \begin{align}
                \label{reglim-csc}
                \Delta_{n_2} (\omega) = - \frac{\mathrm{Log}^{n+1} \omega}{\omega \, (n+1)} + \frac{n!}{\omega} \sum_{j=1}^{\left\lceil n/2 \right\rceil} \frac{(-1)^j (2^{2j}-2)}{(n-2j+1)! \, (2j)!} \left( \frac{\pi}{2}\right)^{2j} B_{2j} \, (\mathrm{Log} \; \omega)^{n-2j+1},
            \end{align}
            for \(|\omega| < \mathrm{min}(a, \rho_o)\), \(-\pi < \mathrm{Arg} (\omega) \leq \pi \) with \(|\mathrm{Arg} (\omega)| \neq \pi/2 \), \(0<\mathrm{Re} (\nu) <1\), and for all positive integer n, where \(\rho_o\) is defined as the distance of the nearest singularity of \(k(z)\) from the origin. Moreover, \(E_{2j}\)'s and \(B_{2j}\)'s are the Euler numbers and Bernoulli numbers, respectively. Also, Equation \eqref{th4.2-001} still holds for \(a = \infty\), if the Stieltjes integral exists as \(a \to \infty\) for \(|\omega| < \infty\) when \(k(z)\) has no singularity or \(|\omega| < \rho_o\) when \(k(z)\) has singularities. Lastly, it holds that
        \begin{align}
            \label{th4.1-002-asym}
            \int_0^a \frac{k(t) \ln^n{t}}{(\omega^2 + t^2)} \mathrm{d}t \sim   k(0) \, \Delta_{n_1} (\omega), \;\;\;\; \omega \to 0.
        \end{align}
    \end{theorem}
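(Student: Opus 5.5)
The plan is to obtain Theorem~\ref{theorem4.2} from Theorem~\ref{theorem4.1} by taking the regularized limit $\nu\to 0$ of both sides of equation~\eqref{th4.1-001}.

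The left-hand side $\int_0^a k(t)\ln^n t\,t^{-\nu}(\omega^2+t^2)^{-1}\,\mathrm{d}t$ converges for $0\le \mathrm{Re}(\nu)<1$ and is analytic at $\nu=0$. Hence its regularized limit is its ordinary value at $\nu=0$, and on the right-hand side all pole contributions at $\nu=0$ must cancel.

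I would use the contour representation~\eqref{reg-lim-cont-int} of the regularized limit on a small circle $|\nu|=\rho$. On such a circle, $\mathrm{Re}(\nu+2k+2)$ stays away from the integers other than $2k+2$, so the bound~\eqref{th4.1-002} holds uniformly in $\nu$: the factor $M(\nu)$ is bounded on the circle since $\sin\pi\nu\neq0$ there. The series $\sum_k(-1)^k\omega^{2k}\bbint{0}{a}k(t)\ln^n t\,t^{-\nu-2k-2}\,\mathrm{d}t$ therefore converges uniformly on the circle for $|\omega|<\min(a,\rho_o)$. By the linearity property~\eqref{lin-reg-lim}, the regularized limit passes inside the sum. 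By the third line of equation~\eqref{eq3.2}, each term becomes $\bbint{0}{a}k(t)\ln^n t\,t^{-2k-2}\,\mathrm{d}t$, which gives the first term of equation~\eqref{th4.2-001}. The coefficients $k(\pm i\omega)$ do not depend on $\nu$, so it remains to compute $\lim^\times_{\nu\to0}\Delta_{n_1}(\nu,\omega)$ and $\lim^\times_{\nu\to0}\Delta_{n_2}(\nu,\omega)$.

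For these, I would not use the explicit derivative formulas. Instead I would write the Leibniz sums compactly as
$$\Delta_{n_1}(\nu,\omega)=\frac{(-1)^n\pi}{2\omega}\frac{\mathrm{d}^n}{\mathrm{d}\nu^n}\left[\omega^{-\nu}\sec\left(\frac{\pi\nu}{2}\right)\right],$$
and similarly for $\Delta_{n_2}$ with $\csc$. This is exactly how they arose in the proof of Theorem~\ref{theorem4.1}.

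For $\Delta_{n_1}$, the bracket is analytic at $0$, so the regularized limit is an ordinary limit. With $L=\mathrm{Log}\,\omega$ and $\sec x=\sum_j(-1)^jE_{2j}x^{2j}/(2j)!$, the $n$-th derivative at $0$ equals $n!$ times the coefficient of $\nu^n$ in $e^{-\nu L}\sec(\pi\nu/2)$. The odd derivatives of $\sec$ vanish, and the Cauchy product gives equation~\eqref{reglim-sec} directly.

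For $\Delta_{n_2}$, the bracket $F(\nu)=\omega^{-\nu}\csc(\pi\nu/2)$ has a simple pole at $\nu=0$. If $F=\sum_{m\ge-1}c_m\nu^m$, then $F^{(n)}$ has constant Laurent term $n!\,c_n$, so $\lim^\times_{\nu\to0}F^{(n)}=n!\,c_n$. I would compute $c_n$ from
$$e^{-\nu L}\cdot\sum_{j\ge0}\frac{(-1)^{j+1}(2^{2j}-2)B_{2j}}{(2j)!}\left(\frac{\pi\nu}{2}\right)^{2j-1}.$$
The $j=0$ term contributes $-L^{n+1}/(\omega(n+1))$ after multiplying by $(-1)^n\pi n!/(2\omega)$. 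For $j\ge1$, the signs combine as $(-1)^n(-1)^{n-2j+1}(-1)^{j+1}=(-1)^j$, which reproduces the sum in equation~\eqref{reglim-csc}, with upper limit $\lceil n/2\rceil$ because $n-2j+1\ge0$.

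For the asymptotic statement~\eqref{th4.1-002-asym}, I would argue as follows:
\begin{itemize}
\item $k(i\omega)-k(-i\omega)=O(\omega)$, so the $\Delta_{n_2}$ term is $O(\mathrm{Log}^{n+1}\omega)$.
\item The finite-part series is $O(1)$.
\item $\Delta_{n_1}(\omega)\sim \pi\,\mathrm{Log}^n\omega/(2\omega)$, which dominates both.
\item Replacing $(k(i\omega)+k(-i\omega))/2$ by $k(0)+O(\omega^2)$ then gives the claim.
\end{itemize}

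The main obstacle is justifying the interchange of the regularized limit with the infinite series. It has to be shown that the poles of the individual finite-part terms at $\nu=0$ are controlled uniformly on the circle, so that~\eqref{lin-reg-lim} applies. I would handle this through the $\nu$-uniform version of~\eqref{th4.1-002} on $|\nu|=\rho$, and note that the statement only requires the sum of regularized limits, not cancellation term by term.
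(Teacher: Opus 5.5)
Your argument is correct, but it is organized differently from the paper's.

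\textbf{The paper's route.} The paper does not take a limit of Theorem~\ref{theorem4.1}. It restarts at $\nu=0$ from $\int_C k(z)\ln^{n+1}z\,(\omega^2+z^2)^{-1}\,\mathrm{d}z$, eliminates the lower-logarithm Stieltjes integrals recursively, and recognizes the surviving contour integrals as the integer-order representation of Theorem~\ref{theorem_3.2}. Only the residue terms are then identified with $\lim^\times_{\nu\to0}\Delta_{n_{1,2}}(\nu,\omega)$, via Theorem~\ref{nth-der-reglim}. The $\csc$ limit is computed through a polygamma formula for $\csc^{(j)}$ and the relation $\zeta(2j)\propto B_{2j}$. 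Convergence of the $\nu=0$ series is proved by a separate bound.

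\textbf{What your route buys.} Applying $\lim^\times_{\nu\to0}$ to all of \eqref{th4.1-001} removes the need for the Bernoulli--Stirling machinery behind Theorem~\ref{theorem_3.2}. It also gives convergence of the $\nu=0$ series for free, since that series is the termwise circle integral of a uniformly convergent series. The Laurent expansion of $e^{-\nu\,\mathrm{Log}\,\omega}\csc(\pi\nu/2)$ produces \eqref{reglim-csc} in a few lines, and your sign count and $j=0$ term are right.

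\textbf{What it costs.} You need \eqref{th4.1-001} on a full punctured disc about $\nu=0$, but Theorem~\ref{theorem4.1} is stated only for $0<\mathrm{Re}(\nu)<1$. Say this explicitly, using either of two fixes:
\begin{itemize}
\item The same $\epsilon$-bounds give locally uniform convergence on $0<|\nu|<\rho'$. Both sides are then analytic there and agree by the identity theorem.
\item Alternatively, note that the derivation of Theorem~\ref{theorem4.1} runs unchanged for non-integer $\nu$ with $\mathrm{Re}(\nu)>-1$.
\end{itemize}
For $a=\infty$, your route also needs the Stieltjes integral to converge for $\nu$ near $0$, not just at $\nu=0$. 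That is slightly more than the theorem's hypothesis, and the paper's direct computation at $\nu=0$ does not require it.
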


\begin{proof}
We begin with the case \(a < \infty\) by considering the contour integral
\begin{equation}
    \label{cont-int-logn-nu-zero}
    \int_C \frac{k(z) \ln^{n+1} z}{\omega^2 + z^2} \, \mathrm{d}z,
\end{equation}
where the contour \(C\) is the same contour shown in Figure~\ref{fig:3-003}. We deform the contour \(C\) into contour \(C'\) in order to expand equation~\eqref{cont-int-logn-nu-zero} while taking into account the simple poles arising from \((\omega^2 + z^2)^{-1}\). This yields
\begingroup
\allowdisplaybreaks
\begin{align}
    \label{cont-int-logn-nuzero-01}
    & \int_C \frac{k(z) \ln^{n+1} z}{\omega^2 + z^2} \, \mathrm{d}z 
    = 2 \pi i \, (n+1) \int_0^a \frac{k(t) \ln^n t}{\omega^2 + t^2} \, \mathrm{d}t \\
    &\hspace{1cm} + \sum_{j=0}^{n-1} \binom{n+1}{j} (2\pi i)^{n+1-j} \int_0^a \frac{k(t) \ln^j t}{\omega^2 + t^2} \, \mathrm{d}t \nonumber \\
    &\hspace{1cm} + k(i\omega) \left[ \frac{\pi}{\omega} \left( \ln \omega + \frac{\pi i}{2} \right)^{n+1} \right] - k(-i\omega) \left[ \frac{\pi}{\omega} \left( \ln \omega + \frac{3\pi i}{2} \right)^{n+1} \right]. \nonumber
\end{align}
\endgroup
Substituting the series expansion provided in equation~\eqref{ser-expand} into the result above, we obtain an expression for the associated Stieltjes transform in the case \(\nu = 0\), given by
\begingroup
\allowdisplaybreaks
\begin{align}
    \label{stieltjes-logn-nuzer-01}
    \int_0^a \frac{k(t) \ln^n t}{\omega^2 + t^2} \, \mathrm{d}t 
    &= \frac{1}{2 \pi i \, (n+1)} \sum_{k=0}^\infty (-1)^k \omega^{2k} \int_C \frac{k(z) \ln^{n+1} z}{z^{2k+2}} \, \mathrm{d}z \\
    &\quad - \frac{1}{n+1} \sum_{j=0}^{n-1} \binom{n+1}{j} (2\pi i)^{n-j} \int_0^a \frac{k(t) \ln^j t}{\omega^2 + t^2} \, \mathrm{d}t \nonumber \\
    &\quad + k(i\omega) \left[ \frac{i}{2\omega (n+1)} \left( \ln \omega + \frac{\pi i}{2} \right)^{n+1} \right] \nonumber \\
    &\quad - k(-i\omega) \left[ \frac{i}{2\omega (n+1)} \left( \ln \omega + \frac{3\pi i}{2} \right)^{n+1} \right]. \nonumber
\end{align}
\endgroup
Again, the uniform convergence of equation~\eqref{ser-expand} along the contour \( C \) permits the interchange of summation and integration in equation~\eqref{stieltjes-logn-nuzer-01}, allowing term-by-term integration.

To evaluate equation \eqref{stieltjes-logn-nuzer-01} recursively, following the approach used for the noninteger \(\lambda\) case, we begin by isolating the last term in the second summation corresponding to \(j = n - 1\). We then express the isolated integral by equation \eqref{stieltjes-logn-nuzer-01} itself, which introduces two new summations with upper limits equal to \(j = n - 2\) and some additional terms. Repeating this process iteratively, we eventually reduce the upper limits of all summations to zero. All coefficients generated during this procedure exhibit the same combinatorial structure as described in Theorem~\ref{theorem_3.2}. Consequently, the Stieltjes transform admits the representation
\begingroup
\allowdisplaybreaks
\begin{align}
    \label{stieltjes-logn-nuzer-02}
    &\int_0^a \frac{k(t) \ln^n t}{\omega^2 + t^2} \, \mathrm{d}t 
    = \frac{1}{2 \pi i \, (n+1)} \sum_{k=0}^\infty (-1)^k \omega^{2k} \int_C \frac{k(z) \ln^{n+1} z}{z^{2k+2}} \, \mathrm{d}z \\
    &\hspace{2cm} + \sum_{j=0}^n \binom{n}{j} (2\pi i)^{n-j} \frac{B_{n-j+1}}{n-j+1} \sum_{k=0}^\infty (-1)^k \omega^{2k} \int_C \frac{k(z) \ln^j z}{z^{2k+2}} \, \mathrm{d}z \nonumber \\
    & - k(i\omega) \frac{\pi}{\omega} \left[ \frac{1}{2\pi i (n+1)} \left( \ln \omega + \frac{\pi i}{2} \right)^{n+1} + \sum_{j=0}^n \binom{n}{j} (2\pi i)^{n-j} \frac{B_{n-j+1}}{n-j+1} \left( \ln \omega + \frac{\pi i}{2} \right)^j \right] \nonumber \\
    & + k(-i\omega) \frac{\pi}{\omega} \left[ \frac{1}{2\pi i (n+1)} \left( \ln \omega + \frac{3 \pi i}{2} \right)^{n+1} + \sum_{j=0}^n \binom{n}{j} (2\pi i)^{n-j} \frac{B_{n-j+1}}{n-j+1} \left( \ln \omega + \frac{3 \pi i}{2} \right)^j \right], \nonumber
\end{align}
\endgroup
where \(B_{n-j+1}\) denotes the Bernoulli numbers.

Equation \eqref{stieltjes-logn-nuzer-02} consists of three terms. Applying the definition of the finite-part integral given in equation \eqref{theorem3.2}, the inner sum of all contour integrals corresponds to the finite-part of a divergent integral exhibiting a logarithmic singularity for integer \(\lambda\). The two residue terms can be expressed as
\begingroup
\allowdisplaybreaks
\begin{align}
    & - k(i\omega) \; \frac{(-1)^n \pi }{\omega} \lim^\times_{\nu \to 0} \frac{\mathrm{d}^n}{\mathrm{d}\nu^n} \left[ \frac{(\omega e^{\pi i/2})^{-\nu}}{e^{-2 \pi i \nu} - 1} \right] \\
    & \hspace{1cm} =  k(i\omega) \; \frac{(-1)^n \pi }{4 \omega} \lim^\times_{\nu \to 0} \frac{\mathrm{d}^n}{\mathrm{d}\nu^n} \left[ \omega^{-\nu} \sec\left(\frac{\pi \nu}{2}\right) - i \, \omega^{-\nu} \csc\left(\frac{\pi \nu}{2}\right) \right], \nonumber \\
    & k(-i\omega) \; \frac{(-1)^n \pi}{\omega} \lim^\times_{\nu \to 0} \frac{\mathrm{d}^n}{\mathrm{d}\nu^n} \left[ \frac{(\omega e^{3 \pi i/2})^{-\nu}}{e^{-2 \pi i \nu} - 1} \right] \\
    & \hspace{2cm} = k(-i\omega) \; \frac{(-1)^n \pi }{4 \omega} \lim^\times_{\nu \to 0} \frac{\mathrm{d}^n}{\mathrm{d}\nu^n} \left[ \omega^{-\nu} \sec\left(\frac{\pi \nu}{2}\right) + i \, \omega^{-\nu} \csc\left(\frac{\pi \nu}{2}\right) \right], \nonumber
\end{align}
\endgroup
as established in Theorem~\ref{nth-der-reglim}. This shows that the residue terms can be represented by the regularized limits of the grouped residue terms \(\Delta_{n_1}(\nu, \omega)\) and \(\Delta_{n_2}(\nu, \omega)\) from Theorem~\ref{theorem4.1} as \(\nu \to 0\). More precisely,
\begin{align}
    \label{d-sec}
    \Delta_{n_1}(\omega) &= \lim^\times_{\nu \to 0} \Delta_{n_1}(\nu, \omega) \\
    &= \frac{\pi}{2 \omega} \sum_{j=0}^n (-1)^j \binom{n}{j} (\mathrm{Log} \, \omega)^{n-j} \lim^\times_{\nu \to 0} \left[ \omega^{-\nu} \frac{\mathrm{d}^j}{\mathrm{d}\nu^j} \sec \left(\frac{\pi \nu}{2}\right) \right], \nonumber \\
    \label{d-csc}
    \Delta_{n_2}(\omega) &= \lim^\times_{\nu \to 0} \Delta_{n_2}(\nu, \omega) \\
    &= \frac{\pi}{2 \omega} \sum_{j=0}^n (-1)^j \binom{n}{j} (\mathrm{Log} \, \omega)^{n-j} \lim^\times_{\nu \to 0} \left[ \omega^{-\nu} \frac{\mathrm{d}^j}{\mathrm{d}\nu^j} \csc \left(\frac{\pi \nu}{2}\right) \right] \nonumber
\end{align}
where the natural logarithm \(\ln \omega\) is replaced by the complex principal value logarithm \(\mathrm{Log} \; \omega\) to analytically continue the function into the complex plane provided that \(-\pi < \mathrm{Arg} (\omega) \leq \pi \) with \(|\mathrm{Arg} (\omega)| \neq \pi/2 \).

There are two regularized limits to evaluate. Since the secant function is analytic at the origin, the first regularized limit reduces to the usual Cauchy limit. Expressing it in terms of Euler numbers \cite[4.19.5]{NIST}, we have
\begin{align}
\label{d-sec2}
    \lim^\times_{\nu \to 0} \omega^{-\nu} \frac{\mathrm{d}^j}{\mathrm{d}\nu^j} \sec \left( \frac{\pi \nu}{2} \right)
    &= \lim_{\nu \to 0} \omega^{-\nu} \frac{\mathrm{d}^j}{\mathrm{d}\nu^j} \sum_{k=0}^\infty \frac{(-1)^k E_{2k}}{(2k)!} \left( \frac{\pi}{2} \right)^{2k} \nu^{2k} \\
    &= \begin{cases}
        (-1)^j \left( \frac{\pi}{2} \right)^{2j} E_{2j}, & \text{if \(j\) is even}, \\
        0, & \text{if \(j\) is odd}.
    \end{cases} \nonumber
\end{align}
Substituting equation \eqref{d-sec2} into equation \eqref{d-sec} and simplifying the resulting terms yields equation \eqref{reglim-sec}.

For the second regularized limit, we use the formula \cite[p. 9, 1.1.5.10]{Brychkov}
\begin{align}
\label{d-csc1}
    \frac{\mathrm{d}^j}{\mathrm{d}\nu^j} \csc \left( \frac{\pi \nu}{2} \right)
    &= (-1)^{j+1} \frac{2 j!}{\pi \nu^{j+1}} + \frac{1}{2^{j+1} \pi} \left[ \psi^{(j)} \left( \frac{2+\nu}{4} \right) - (-1)^j \, \psi^{(j)} \left( \frac{2-\nu}{4} \right) \right] \\
    & \hspace{3cm} - \frac{1}{2^{2j+1} \pi} \left[ \psi^{(j)} \left( \frac{\nu}{4} \right) - (-1)^j \, \psi^{(j)} \left( \frac{\nu}{4} \right) \right], \nonumber
\end{align}
where \(\psi^{(n)}(z)\) denotes the polygamma function of order \(n\). Substituting equation \eqref{d-csc1} into equation \eqref{d-csc} decomposes the regularized limit into three terms. By linearity of the regularized limit, the first term evaluates, using equation \eqref{lemma2.1-001}, to
\begin{align}
    (-1)^{j+1} \frac{2 j!}{\pi} \lim^\times_{\nu \to 0} \frac{\omega^{-\nu}}{\nu^{j+1}} = \frac{2}{\pi} \frac{(\mathrm{Log} \, \omega)^{j+1}}{j+1}.
\end{align}

The second term involves the polygamma function, which is analytic in the right half-plane, thereby reducing the regularized limit to the standard Cauchy limit. Using the representation \cite[5.15.3]{NIST}
\begin{align}
    \psi^{(j)}\left( \frac{1}{2} \right) = (-1)^{j+1} j! \left( 2^{j+1} - 1 \right) \zeta(j+1),
\end{align}
where \(\zeta(z)\) denotes the Riemann zeta function, the regularized limit of the second term is proportional to
\begin{align}
    & \frac{1}{2^{j+1} \pi} \lim^\times_{\nu \to 0} \omega^{-\nu} \left[ \psi^{(j)}\left( \frac{2+\nu}{4} \right) - (-1)^j \psi^{(j)}\left( \frac{2-\nu}{4} \right) \right] \\
    & \hspace{2cm} = \frac{(-1)^j}{2^{2j+1} \pi} \left( 1 - (-1)^j \right) \left( 1 - 2^{j+1} \right) j! \, \zeta(j+1). \nonumber
\end{align}
Note that only odd values of \(j\) contribute to the summation.

The evaluation of the third term requires a polygamma function at \(\nu = 0\). A useful asymptotic expansion for \(z \to 0\) is given by \cite{mathematica001}
\begin{align}
\label{prth4.2-006}
    \psi^{(j)}(z) = \frac{(-1)^{j-1} j!}{z^{j+1}} + (-1)^{j-1} j! \, \zeta(j+1) \left( 1 + \mathcal{O}(z) \right), \quad j = 1, 2, 3, \ldots,
\end{align}
which, combined with equation \eqref{lemma2.1-001}, yields the regularized limit of the third term:
\begin{align}
    & -\frac{1}{2^{2j+1} \pi} \lim^\times_{\nu \to 0} \omega^{-\nu} \left[ \psi^{(j)}\left( \frac{\nu}{4} \right) - (-1)^j \psi^{(j)}\left( \frac{\nu}{4} \right) \right] \\
    & \hspace{2cm} = -\frac{4}{\pi} \frac{(\mathrm{Log} \, \omega)^{j+1}}{j+1} + \frac{(-1)^j}{2^{2j+1} \pi} \left( 1 - (-1)^j \right) j! \, \zeta(j+1). \nonumber
\end{align}

Substituting all evaluated regularized limits, simplifying the resulting terms, and applying the representation of the Riemann zeta function in terms of Bernoulli numbers \cite[25.6.2]{NIST},
\begin{align}
\label{th4.2-00}
    \zeta(2j) = \frac{(-1)^{j-1} 2^{2j-1} \pi^{2j}}{(2j)!} B_{2j},
\end{align}
leads to the final expression stated in equation \eqref{th4.2-001} of Theorem \ref{theorem4.2}.

We now proceed to verify that the infinite series in equation~\eqref{th4.2-001} converges absolutely. Let us consider the case where \(a < \infty\). There are two possible scenarios: either \(a < \rho_o\) or \(a > \rho_o\). We begin with the case \(\rho_o < a < \infty\). In this setting, the infinite series in equation~\eqref{th4.2-001} can be bounded as follows:
\begin{align}
    \label{prth4.2-001}
    \Biggl |\sum_{k=0}^\infty (-1)^k \omega^{2k}  \bbint{0}{a} \frac{k(t) \ln^n{t}}{t^{2k+2}} \, \mathrm{d}t \Biggr | 
    \leq \sum_{k=0}^\infty |\omega|^{2k}   \Biggl |\bbint{0}{a} \frac{k(t) \ln^n{t}}{t^{2k+2}} \, \mathrm{d}t \Biggr |.
\end{align}
According to the results in~\cite{galapon2023}, the finite-part integral in equation~\eqref{prth4.2-001} satisfies the following inequality:
\begin{align}
    \label{prth4.2-002}
    \Biggl | \bbint{0}{a} \frac{k(t) \ln^n{t}}{t^{2k+2}} \, \mathrm{d}t \Biggr | 
    \leq |c_{2k+1}| \frac{|\ln{\epsilon}|^{n+1}}{n+1} + \frac{M_o(a,\epsilon)}{\epsilon^{2k+1}},
\end{align}
where the function \(k(t)\) is expanded as \(k(t) = \sum_{l=0}^\infty c_l t^l\), and \(M_o(a, \epsilon)\) is given by
\begin{align}
    M_o(a,\epsilon) = \int_\epsilon^a \frac{|k(t)\ln^n{t}|}{t} \, \mathrm{d}t 
    + \sum_{j=0}^n \frac{|\ln{\epsilon}|^\epsilon}{j!} \sum_{\substack{l = 0 \\ l \neq 2k+1}}^\infty |c_l| \epsilon^l.
\end{align}
Substituting inequality~\eqref{prth4.2-002} into equation~\eqref{prth4.2-001} yields
\begin{align}
    \label{prth4.2-003}
    \Biggl |\sum_{k=0}^\infty (-1)^k \omega^{2k}  \bbint{0}{a} \frac{k(t) \ln^n{t}}{t^{2k+2}} \, \mathrm{d}t \Biggr | 
    \leq \frac{|\ln{\epsilon}|^{n+1}}{n+1} \sum_{k=0}^\infty |c_{2k+1} \omega^{2k}| 
    + \frac{M_o(a, \epsilon)}{\epsilon} \sum_{k=0}^\infty \left|\frac{\omega}{\epsilon}\right|^{2k}.
\end{align}
For the series in equation~\eqref{prth4.2-003} to converge absolutely, both terms must converge. The first term converges provided that \(|\omega| < \rho_o\), while the second term converges if \(|\omega| < \epsilon\). Therefore, the overall series converges absolutely if \(|\omega| < \rho_o\).

The remaining case \(a < \rho_o\) can be addressed using the same approach employed in the proof of Theorem~\ref{theorem4.1}. In this situation, it is important to note that the function \(k(t)\) is entire and therefore admits a power series representation. Consequently, it is valid to perform term-by-term integration followed by summation.

Now, consider the case when \(a = \infty\). As in the case \(\nu \neq 0\), the absolute convergence of the infinite series in equation \eqref{th4.2-001} must have a definite value, assuming that the generalized Stieltjes transform in equation \eqref{sec4.2-003} exists for \(\nu = 0\). This implies that Theorem~\ref{theorem4.2} remains valid as \(a \to \infty\).

Applying the same reasoning used in Theorem~\ref{theorem4.1} to prove the asymptotic relation in equation \eqref{th4.1-002-asym}, rewrite equation \eqref{th4.2-001} as
\begin{align}
\label{asymp-2}
    \int_0^a \frac{k(t) \ln^n t}{\omega^2 + t^2} \, \mathrm{d}t 
    = \biggl( \bbint{0}{a} \frac{k(t) \ln^n t}{t^{2}} \, \mathrm{d}t + O(\omega) \biggr) + \bigl( k(0) + O(i\omega) \bigr) \, \Delta_{n_1}(\nu, \omega),
\end{align}
as \(\omega \to 0\). From equation \eqref{reglim-sec}, we have \(\Delta_{n_1}(\nu, \omega) = O\bigl(\omega^{-1} \mathrm{Log}^{n} \omega \bigr)\) as \(\omega \to 0\). Therefore, the second term dominates the first and third terms in equation \eqref{asymp-2}, which establishes equation \eqref{th4.1-002-asym}. The proof presented under this theorem follows the same structure detailed in \cite{galapon2023}.
\end{proof}



\end{document}